\documentclass[a4paper,11pt]{amsart}
\usepackage{comment}
\usepackage{longtable}
\usepackage{listings}

\usepackage[T1]{fontenc}
\usepackage{amscd}
\usepackage{amsmath}
\usepackage{amssymb}
\usepackage{hyperref}
\usepackage{epsf,latexsym,graphicx,dsfont}
\usepackage[matrix,arrow,tips,curve]{xy}
\usepackage{color}
\newlength{\defbaselineskip} 
\usepackage{footnote}
\usepackage{color}
\usepackage{todonotes}
\usepackage{MnSymbol}
\usepackage{amsfonts,amssymb,amscd}
\newtheorem{thm}{Theorem}[section]

\newtheorem{lem}[thm]{Lemma}
\newtheorem{prop}[thm]{Proposition}

\newtheorem{prob}[thm]{Problem}
\theoremstyle{definition}
\newtheorem{defi}[thm]{Definition}
\newtheorem{example}[thm]{Example}
\newtheorem{rem}[thm]{Remark}

\usepackage{fullpage}
\usepackage{tikz,tikz-cd}
\usetikzlibrary{matrix,arrows}
\makeatletter
\usetikzlibrary{shapes.misc}
\usetikzlibrary{patterns}
\usetikzlibrary{shapes.geometric}
\numberwithin{equation}{section} \theoremstyle{definition}

\DeclareMathOperator{\Age}{Age}
 \DeclareMathOperator{\Spec}{Spec}
 
\DeclareMathOperator{\Hom}{Hom}

\newcommand\PP{{\mathbb{P}}}
\newcommand\QQ{{\mathbb{Q}}}
\newcommand\CC{{\mathbb{C}}}
\newcommand\ZZ{\mathbb{Z}}
\newcommand\HK{hyper-K\"ahler\ }
\lstdefinelanguage{Macaulay2}{
  keywords={ dummyIGuess, for, in, while, do, from, if, then, else, and, or, new,},
morecomment=[l]{--},
alsoletter={'},
alsoother={_},
}

\lstdefinelanguage{GAP}
{
    keywords={for, in, while, do, od, if, fi, then, else, end, and, or, function,return, break, local},
    sensitive=true,
    morecomment=[l]{\#},
    morestring=[b]",
    alsoletter = {_}
}

\lstdefinelanguage{MAGMA}
{
    keywords={for, in, while, do, if, then, else, end, and, or, function, procedure, return, break, where, elif},
    sensitive=true,
    morecomment=[l]{//},
    morecomment=[s]{/*}{*/},
    morestring=[b]",
    alsoletter = {_},
    commentstyle=\itshape\color{gray},
}

\author[Maria Donten-Bury]{Maria Donten-Bury}
\address{M. Donten-Bury: Instytut Matematyki UW, Banacha 2, 02-097 Warszawa, Poland}
\email{M.Donten@mimuw.edu.pl}

\author[Grzegorz Kapustka]{Grzegorz Kapustka}
\address{G. Kapustka: Department of Mathematics and Informatics, Jagiellonian University, \L ojasiewicza 6, 30-348, Krak\'ow, Poland}
\email{grzegorz.kapustka@uj.edu.pl}

\author[Benedetta Piroddi]{Benedetta Piroddi}
\address{B.Piroddi: Fachrichtung Mathematik, Campus Saarbr\"ucken, Geb\"aude E2 4, Universit\"at des Saarlandes, 66123 Saarbr\"ucken, Germany}
\email{piroddi@math.uni-sb.de}

\author[Tomasz Wawak]{Tomasz Wawak}
\address{T. Wawak: Department of Mathematics and Informatics, Jagiellonian University, \L ojasiewicza 6, 30-348, Krak\'ow, Poland}
\email{tomasz.wawak@uj.edu.pl}

\begin{document}
\title{A symplectic fourfold}

\begin{abstract}
We present a method to construct irreducible symplectic varieties by studying terminalisations of quotient of \HK manifolds by non-natural group actions.
In particular we construct irreducible symplectic varieties of dimension $4$ with $b_2 = 4$ and non-quotient singularities: this 
provides explicit examples of ISVs for which a global Torelli theorem is not known to hold.

\end{abstract}
\keywords{hyperk\"ahler manifolds, symplectic automorphisms, irreducible symplectic varieties, terminalisation of singularities, Cox rings}
\subjclass{14J42, 14J17, 14E15, 14L30}
\maketitle
\section{Introduction}

Hyper-K\"ahler manifolds are one of the three main building blocks of compact Kähler manifolds with vanishing first Chern class, together with Calabi-Yau manifolds and complex tori \cite{Beauville}. Their rich and interesting geometry is counterbalanced by the scarcity of known examples: K3 surfaces in dimension 2, then two deformation classes in even dimensions (K3$^{[n]}$-type and Km$_n$-type) \cite{Beauville} and two sporadic examples in dimension 6 and 10 \cite{OGrady6, OGrady10}.

However, similar decomposition theorems hold in non-smooth settings: in the settings of orbifolds with vanishing first Chern class \cite{Campana} the objects that play the role of hyper-K\"ahler manifolds are irreducible symplectic orbifolds (ISOs); for normal projective varieties with klt singularities and numerically trivial canonical bundle, a finite cover is the product of irreducible symplectic varieties (ISVs), Calabi-Yau and Abelian klt varieties \cite{HP}. In particular, ISVs seem to be the most interesting setting, as they also appear in the minimal model program, as minimal models of manifolds with trivial first Chern class. It is also worth noting  that if an ISO is projective, then it will also be an ISV;
on the other hand, the singularities appearing in ISVs are not always quotient singularities (a more thorough exposition on the relation between the two notions can be found in~\cite{Perego}).\\

The aim of this paper is to construct an irreducible symplectic fourfold $\tilde X$ with $b_2(\tilde X) = 4$ and non-quotient $\QQ$-factorial terminal singularities (thus providing an answer to Question 2 from~\cite[page 6]{BGMM}): our example admits two singularities that occur as a singular point along a non-minimal nilpotent orbit closure of a Lie algebra.
Such an example is interesting because it is not known if the global Torelli theorem holds for it: indeed, the theorem holds in the singular setting for ISVs with $b_2>4$ \cite[Theorem 1.1]{BL}, and for ISOs \cite{Men20}, but $\tilde X$ belongs to neither of these classes.

There are various strategies in the literature for the construction of symplectic orbifolds and varieties: we recall Fujiki's approach \cite{Fujiki}, later generalised in~\cite{Menet33}; the moduli space construction \cite[Theorem 1.10]{PR}; the compactification of Lagrangian fibrations, first deployed in~\cite{MT} and then generalised in~\cite{ASF, Prym, LLX}.

The approach we are going to follow is to consider terminalisations of quotients of \HK manifolds: let $X$ be a \HK manifold, and let $G$ be a finite group acting symplectically on $X$. 
The quotient $X/G$ is naturally endowed with a symplectic form, that extends to any terminalisation of its singularities $\tilde X$.
Examples of ISVs obtained with this construction for prime order groups can be found in Menet's works (see for instance~\cite{Menet2, MenetCyclic, KapMen}). In~\cite{BGMM}, the authors classify finite symplectic quotients of K3$^{[n]}$-type and Km$_n$-type manifolds, under the assumption that the action of $G$ is \emph{natural}: this means that it deforms to the action induced on the Hilbert scheme of $n$ points $S^{[n]}$ of a K3 surface $S$, by a symplectic action of $G$ on $S$ itself (respectively, deforms to an action on the generalized Kummer variety Km$_n(A)$ coming from the underlying abelian surface).
An alternative construction of K3$^{[2]}$-type manifolds is as Fano varieties of lines of a cubic fourfold, and some non-natural symplectic actions can be induced on K3$^{[2]}$-type manifolds through this construction, starting from actions on cubic fourfolds: a classification of the symplectic varieties that can be obtained terminalising these quotients can be found in~\cite{Mazzon}. 

Our focus in this paper will be on a third construction of K3$^{[2]}$-type manifolds, as double EPW sextics; moreover, we are going to restrict to maximal symplectic group actions: we remark that such actions are never natural, and that if a maximal group $G$ acts on both a double EPW sextic and a Fano variety of lines on a cubic fourfold, usually the two actions will be different in cohomology (see Appendix \ref{Ap:groups} for more details and exceptions).\\

A classification of all finite symplectic actions on K3$^{[2]}$-type manifolds can be found in~\cite{HM}.
Our main construction concerns the action of the group $L_3(4)$, the second largest in the list.
Note that there are two one parameter families of \HK fourfolds admitting an action of this group, but the action is unique on a given projective example. Our main result is the following.

\begin{thm}\label{main}
Let $X$ be a general $\text{K}3^{[2]}$-type manifold admitting a symplectic action with invariant lattice 
 \begin{equation}\label{eq:TX}
     \begin{bmatrix}
        2 &0 &0 \\ 0 &10 &4\\ 0 &4 &10 
    \end{bmatrix}.
 \end{equation}
The $\QQ$-factorial terminalisation $\tilde{X}$ of the fourfold ${X/L_3(4)}$ is simply connected, with simply connected smooth locus, and has $b_2(\tilde{X})=4$. Its singularities are nine of type $\frac{1}{7}(1,2,-1,-2)$ twelve of type $\frac{1}{5}(1,2,-1,-2)$ six of type $\frac{1}{3}(1,-1,1,-1)$ sixteen of type $\frac{1}{2}(1,1,1,1)$ and two singular points analytically isomorphic to $(\mathcal{Z}(5),x_5)$.
\end{thm}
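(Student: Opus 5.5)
The plan has three parts: describe the fixed loci of the action, terminalise the local quotient singularities one type at a time, and then compute the global invariants.

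\textbf{Fixed loci and local models.} Realise $X$ as a double EPW sextic whose Lagrangian data carries the $L_3(4)$-symmetry, and study the induced action. For every element $g\in G=L_3(4)$ I would compute its fixed locus on $X$, using the Lefschetz fixed point formula together with the known classification of fixed loci of symplectic automorphisms of K3$^{[2]}$-type manifolds. The character of $G$ on $H^*(X)$ is determined by the invariant lattice (\ref{eq:TX}) through the classification in \cite{HM}. Next I group the points of $X$ with nontrivial stabiliser into $G$-orbits and record, for each orbit, the stabiliser $H\subset G$ and its representation on $T_xX\cong\CC^4$; this representation lies in $Sp(4)$. The singularities of $X/G$ are then the $\CC^4/H$ for these pairs. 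The main expected feature is a positive-dimensional locus with stabiliser $\ZZ/2$ (an involution fixes a K3 surface and isolated points), meeting points with larger stabilisers.

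\textbf{Local terminalisation.} Work orbit by orbit. Where the transversal singularity along a surface is $A_1$, blowing it up crepantly is forced. Where the stabiliser $H$ acts with isolated fixed point, the quotient $\CC^4/H$ for a cyclic $H$ of order $7,5,3,2$ with weights $(1,2,-1,-2)$, $(1,-1,1,-1)$ or $(1,1,1,1)$ is already terminal, since it is an isolated symplectic singularity of codimension $\ge 4$. These points survive into $\tilde X$ as stated. For the remaining non-abelian stabilisers, and for points lying on the $\ZZ/2$-surfaces, I would build the $\QQ$-factorial terminalisation of $\CC^4/H$ via Cox rings: compute $\mathrm{Cox}$ of a crepant partial resolution (for instance by finding the $H$-invariants and the Picard torus action, then checking by Macaulay2/MAGMA computation), and read off its remaining singular points. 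The claim to verify is that exactly two orbits produce a residual singularity, and that this singularity is analytically $(\mathcal{Z}(5),x_5)$. To identify it, I would exhibit the local equations of the chart and match them with the nilpotent orbit closure description, checking $\QQ$-factoriality through the class group of the chart. This is the main obstacle: the Cox ring computation for the non-abelian stabilisers and the identification of the non-quotient point. Once the local terminalisations are done, they glue to give $\tilde X$ and the list of singularities; this glueing is possible because $\QQ$-factorial terminalisations are local over $X/G$ up to the choice made on each orbit.

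\textbf{Global invariants.} For $b_2$, I would use $H^2(X/G,\QQ)=H^2(X,\QQ)^G$, which has rank $3$ by (\ref{eq:TX}). Each exceptional prime divisor of $\tilde X\to X/G$ adds $1$, and these divisors come from the $G$-orbits of fixed surfaces with $A_1$ transversal type. So I need to show there is exactly one such orbit, which gives $b_2=4$. For simple connectivity, $\pi_1(X/G)$ is a quotient of $G$ by the subgroup generated by elements having fixed points; this is all of $G$ because $L_3(4)$ is simple. Hence $X/G$ and $\tilde X$ are simply connected, using that terminalisations do not change $\pi_1$ for klt spaces. For the smooth locus, I would apply the orbifold/Armstrong version to $X^\circ$, the complement of the points with non-$\ZZ/2$ stabilisers. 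These points have codimension $4$, so $X^\circ$ is simply connected, and $G$ is generated by the stabilisers of points of the smooth locus of $\tilde X$; concretely, the involutions fixing the K3 surfaces give local loops that bound in $\tilde X_{\mathrm{reg}}$. The order-$2$ elements generate the simple group $G$, so $\pi_1(\tilde X_{\mathrm{reg}})$ is trivial.
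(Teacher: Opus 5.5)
Your global part is essentially the paper's argument. You get $b_2=\mathrm{rk}\,H^2(X,\QQ)^{G}+1=4$ once the codimension-two singular locus is the single irreducible surface $S$ with transversal $A_1$; this also needs that no element of order $3$ fixes an abelian surface, which holds because such elements fix only isolated points on $Y$. The $\pi_1$ statements follow because the involutions, whose fixed loci have codimension two, generate the simple group $L_3(4)$.

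The genuine gap is in your bookkeeping of the singularities, and two of your claims are false.
\begin{itemize}
\item You place the sixteen points of type $\frac12(1,1,1,1)$ among isolated points of $X$ with stabiliser $\ZZ_2$ that are already terminal in $X/G$ and simply survive. In fact no isolated point of $X$ has stabiliser $\ZZ_2$. Write an involution as $\iota=\alpha^2=\beta^2=\gamma^2$, with $\alpha,\beta,\gamma$ in the three conjugacy classes of elements of order $4$. The twelve isolated fixed points of $\iota$ over the six points of $\PP^1_\iota\cap Y$ have stabiliser $D_8$ or $\ZZ_2\times Q_8$. The sixteen over the nodes of the Kummer quartic have stabiliser $D_8$. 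So all $28$ lie on K3 surfaces fixed by other involutions, i.e.\ on $S$.
\item You set as the target of the local computations that exactly two orbits on $S$, the $\mathcal Z(5)$ ones, leave residual singularities. In fact the $\frac12(1,1,1,1)$ points are created by the terminalisation itself, so eight orbits on $S$ leave residual singularities, not two.
\end{itemize}
The sixteen points arise as follows, giving $16=10+6$.
\begin{itemize}
\item At each of the five special points of type $\CC^4/\ZZ_4$ (local generator $\mathrm{diag}(i,-1,-i,-1)$), blow up the image of $F_\alpha$. The residual involution induced by $\alpha$ has two isolated fixed points on the exceptional $\PP^1$ over the origin, giving $10$ points.
\item The single point of type $\CC^4/(\ZZ_2\times Q_8)$ has no crepant resolution, since its junior elements generate only a normal $\ZZ_2^2$. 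The Cox ring/GIT computation shows its terminalisation has six points of type $\frac12(1,1,1,1)$.
\end{itemize}

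To repair the plan you need three ingredients you did not anticipate.
\begin{itemize}
\item A proof that every isolated fixed point of an involution has a larger stabiliser.
\item The determination of the $\ZZ_2\times Q_8$ stratum. This is delicate: the corresponding orbit in $Y$ has stabiliser of order $32$, and you must show that its lift exchanges the two points of each fibre, leaving the index-two subgroup $\ZZ_2\times Q_8$. The paper excludes the other index-two subgroups by examining the induced actions on the two K3 surfaces through the point.
\item The local terminalisations of $\CC^4/\ZZ_4$ and $\CC^4/(\ZZ_2\times Q_8)$.
\end{itemize}
Two smaller points. First, the type $\frac17(1,2,-1,-2)$ of the order-$7$ points is not automatic: you must exclude $\frac17(1,-1,1,-1)$. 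The paper does this with the holomorphic Lefschetz--Riemann--Roch formula; the left-hand side is $3$, which forces all nine points to be of the second type. Second, for $D_{10}$ you need not find local equations by hand. The blow-up of the singular locus of $\CC^4/D_{10}$, with $D_{10}$ acting on $V\oplus V^*$, is locally analytically a Calogero--Moser space. Its unique singular point is the $\QQ$-factorial terminal singularity $(\mathcal Z(5),x_5)$.
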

The singularity $(\mathcal{Z}(5),x_5)$ appearing in the statement is an isolated $\QQ$-factorial singularity: a description of it can be found in~\cite[Section 1]{BBFJLS}. It was observed in~\cite[Theorem 1.3]{BBFJLS} that this singularity is not locally analytically isomorphic to a singularity of a minimal orbit closure for any simple Lie algebra; moreover, since it has trivial local fundamental group, it is not a quotient singularity.

This is a new example, as far as we can tell from the literature: indeed, we remark that for many known examples of ISVs, the second Betti number and the singularities are not known (a partial list can be obtained from the classifications in~\cite{BGMM, Menet33, Mazzon}).

\begin{rem}\label{rem:dimension_of_moduli_tildeX}

The moduli space of marked ISVs deformation equivalent to our new example has dimension $2$. We start from a rigid polarised K3$^{[2]}$-type manifold $X$: the $\QQ$-factorial terminalisation $\tilde X$ of the quotient $X/L_3(4)$ (that exists by~\cite[Corollary 1.4.3]{BCHM2010}) 
 is polarised, its polarisation coming from the $L_3(4)$-invariant polarisation of $X$; moreover, the exceptional divisor defines a new algebraic class. Therefore, $\tilde X$ has two algebraic classes, while its general deformation (a general point in the corresponding period domain) has no algebraic class. Note also that (losing the polarisation of $X$) there exists a one-dimensional family of marked K3$^{[2]}$-type manifolds admitting a symplectic action of $L_3(4)$. Since by~\cite[Proposition 3.9]{BayerPerry} the symplectic action can be performed in families (after possibly a base change), taking quotients and terminalising we obtain a one-parameter family of singular ISVs: this gives a divisor in the period domain corresponding to examples where the exceptional divisor of the terminalisation is algebraic.
\end{rem}

The proof of Theorem \ref{main} is given in Section \ref{sec:proof_of_main}.
The idea is to consider a special symmetric K3$^{[2]}$-type fourfold $X$ with a symplectic $L_3(4)$ action, having additionally a polarisation of Beauville-Bogomolov degree $2$ (it exists by~\cite[Table 1]{Wawak}). This permits us to study in Section \ref{sec:singularities} the singularities of $X/L_3(4)$. More precisely, such a polarisation defines a $2:1$ map $X\to Y\subset \PP^5$, where $Y$ is invariant for a linear action of $L_3(4)$ on $\PP^5$: in Section \ref{sec:fixed_points} we use the latter action to deduce the points with non-trivial stabiliser for the action of $L_3(4)$ on $X$. In Section \ref{sec:terminalisations}, we study their terminalisations: in particular, the two singularities of type $(\mathcal{Z}(5),x_5)$ come from points fixed by $D_{10}$.\\

In Appendix \ref{Ap:K3} we describe the isomorphism class of the K3 surfaces fixed on $X$ by any of the involutions in $L_3(4)$.

In Appendix \ref{Ap:groups} we discuss other maximal groups ($A_7, L_2(11), Z_2\times A_2(7), \mathbb{Z}_2^4:S_5$ and $M_{10}$) 
that can be used in place of $L_3(4)$ to provide new examples of ISVs. In particular, we find in Theorem \ref{main1} that the terminalisations of the quotient manifolds with respect to the groups $A_7, L_2(11), M_{10}$ also have $b_2=4$, and the first two of them surely have non-quotient singularities.

In Appendix \ref{Ap:codes} we provide the code to construct the EPW sextic $Y\subset\PP^5$ invariant for the action of $L_3(4)$.

In Appendix \ref{Ap:fix} we provide tables for the invariant subspaces in $\PP^5$ of subgroups of the maximal groups studied in this paper, and their intersection with invariant EPW sextics $Y\subset\PP^5$.

\subsection*{Acknowledgements}
We would like to thank M. Mauri, G. Mongardi, A. Sarti and C. Tschanz, for discussions.
GK is supported by the project Narodowe Centrum Nauki 2024/53/B/ST1/00161.
BP is supported by the project Narodowe Centrum Nauki 2024/55/B/ST1/02409, the project Narodowe Centrum Nauki 2018/30/E/ST1/00530 and was supported by the program Excellence Initiative at the Jagiellonian University in Krakow (ID.UJ).
TW is supported by the project Narodowe Centrum Nauki 2024/53/B/ST1/01413 and the project Narodowe Centrum Nauki 2018/30/E/ST1/00530.

\section{General results}
\subsection{Double EPW sextics} 
Let us first recall the classical double EPW construction.

Let $W$ be a $6$-dimensional complex vector space. Let \[\textstyle \operatorname{LG}(10,\bigwedge^3 W ) \subset \operatorname{G}(10, \bigwedge^3 W )\]
be the Lagrangian Grassmannian corresponding to the symplectic structure on $\bigwedge^3 W$ given by wedge product and by fixing a generator of $\bigwedge^6 W$. 
 Set
\begin{equation}\label{Fv}\textstyle
F_v=v\wedge \bigwedge^2 W \subset  \bigwedge^3 W \text{ for } v\in W;
\end{equation}
notice that all these spaces are Lagrangian subspaces of $\bigwedge^3 W$. Then we define
\begin{equation}\textstyle
	Y_A:=\{[v]\in \mathbb P(W)\mid\ \dim (F_v\cap A)\geq 1 \},
 \end{equation}
the Eisenbud-Popescu-Walter (EPW) sextic associated to a Lagrangian space $A \subset \bigwedge^3 W$.
When $A$ is general, canonical double covers of $Y_A$ were constructed: they are \HK manifolds of $\text{K}3^{[2]}$-type with a polarisation of Beauville-Bogomolov degree $2$ \cite{Ogrady-EPW}.

If $\PP(A)$ is the span of the second Veronese $v_2(\PP^3)\subset G(3,6)\subset \PP(\bigwedge^3 W)$, defined by the planes contained in a quadric in $\PP^5$, the corresponding \HK manifold is the Hilbert scheme of two points on a quartic surface. The map to $\PP^5$ is given by associating to two points on the K3 surface a line spanned by them (thus, a point in $G(2,4)\subset \PP^5$).

In order to construct a double EPW sextic with a symplectic group action, we need to find a Lagrangian space $A\subset \bigwedge^3 W$ having an appropriate stabilizer: depending on the choice of the group, the resulting $A$ might be special (see Appendix \ref{Ap:groups}).

\subsection{A double EPW sextic with an $L_3(4)$ symplectic action}
It follows from~\cite[Table 9]{HM} that one can define two different symplectic actions of the group
$G=L_3(4)$ on a $\text{K}3^{[2]}$-type manifold $X$. The co-invariant lattice $L_G$ for the action of $L_3(4)$ on $H^2(X,\ZZ)$ is uniquely defined and is of maximal rank 20; the invariant lattice $L^G$ can be either of the following positive definite lattices:
 \begin{equation}\label{eq:invariant}
     \begin{bmatrix}
        2 &0 &0 \\ 0 &10 &4\\ 0 &4 &10 
    \end{bmatrix},\ \ \ \ \ 
     \begin{bmatrix}
        4 &2 &0 \\ 2 &4 &0\\ 0 &0 &14 
    \end{bmatrix}.
 \end{equation}

The data $(L_G,L^G)$ defines a unique action of $L_3(4)$ on $H^2(X,\ZZ)$, so by the Torelli theorem for \HK manifolds one can deduce that, for a general element $X$ in one of the two families, the action of $L_3(4)$ on $X$ is unique (depending only on the family).

Notice that, if we now consider \emph{polarised} elements, once we fix the degree of the polarisation the moduli space becomes discrete, if not empty. In particular, it follows from~\cite[Table 1]{Wawak} that in the first family there is an element $(X,H)$, where $H$ is an $L_3(4)$ invariant polarisation of Beauville-Bogomolov degree $2$, and the transcendental lattice of $X$ is
  \[T(X)=\begin{bmatrix}
     10 &4\\ 4 &10 
    \end{bmatrix}.\]
  \begin{lem} The polarisation $H$ gives a $2:1$ map $\pi\colon X\to Y\subset\PP^5$, where $Y$ is an EPW sextic that is also invariant with respect to the $L_3(4)$ action.
\end{lem}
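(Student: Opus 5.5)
We will use O'Grady's characterisation of degree-$2$ polarised K3$^{[2]}$-type manifolds: if $(X,H)$ is polarised of Beauville--Bogomolov degree $q(H)=2$ and $H$ is ample (or at least nef and big with no exceptional behaviour), then $|H|$ gives a morphism $X\to\PP^5=\PP(H^0(X,H)^\vee)$. Either this morphism is $2:1$ onto an EPW sextic $Y_A$, possibly with $A$ special but without decomposable vectors, or $X$ is birational to a Hilbert square of a quartic K3 surface and the map factors through $G(2,4)$. The plan has three steps. First, show that $H$ is ample and that we are not in the degenerate cases. Second, conclude that $\pi$ is the canonical double cover of an EPW sextic. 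Third, check that the $L_3(4)$-action descends to a linear action preserving $Y$.

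For the first step we use the lattice data. The invariant lattice is $L^G=\langle 2\rangle\oplus\begin{bmatrix}10&4\\4&10\end{bmatrix}$ and $T(X)$ is the second summand, so $NS(X)\supseteq \ZZ H\oplus L_G$. Here $L_G$ is negative definite of rank $20$ and contains no wall divisors; indeed, wall divisors in the coinvariant lattice are excluded for symplectic actions by \cite{HM}. Hence $NS(X)^G=\ZZ H$. Since the action is symplectic, the invariant class $H$ lies in the closure of the invariant part of the ample cone, and that invariant part is nonempty because the average of an ample class is ample. This forces $H$, up to sign, to be ample.

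The degenerate cases of O'Grady's theorem are detected by specific classes in $NS(X)$. One is a class $\ell$ with $q(\ell)=0$ and $(\ell,H)=1$, giving a Lagrangian fibration or a contraction. Another is a class $v$ with $q(v)=-2$ and $(v,H)=0$ or $1$, which is the Hilbert scheme of a quartic case with divisibility $2$ conditions; there are also classes of square $-10$ of the relevant divisibility. We will rule these out by a direct computation in the lattice $\ZZ H\oplus L_G$, or in its overlattice, using that $L_G$ has minimal norm large enough. In Höhn--Mason's description $L_G$ contains no $(-2)$-roots, so its minimum is at least $4$, which we need to double-check. The key is to write a class as $v=aH/2+w$ in the overlattice and bound $q(v)$. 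If this computation becomes unwieldy, the alternative is simply to cite \cite[Table 1]{Wawak}, where degree-$2$ invariant polarisations are listed precisely as those giving double EPW sextics. We expect this lattice verification to be the main obstacle.

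For equivariance, observe that $H$ is $G$-invariant, so $G$ acts on $H^0(X,H)\cong\CC^6$ up to a projective ambiguity. Thus $G$ acts linearly on $\PP^5$, passing to a central extension if necessary, and $\pi$ is $G$-equivariant. The image $Y=\pi(X)$ is then $G$-invariant. Moreover, the covering involution $\iota$ is anti-symplectic, while $G$ is symplectic, so $G$ commutes with $\iota$ by uniqueness of the covering involution and injects into $\mathrm{Aut}(Y)$. Finally, the linear action on $W$ lifts to $\bigwedge^3W$ and preserves the Lagrangian $A$, since $A$ is determined by $Y$ for $A$ without decomposable vectors.
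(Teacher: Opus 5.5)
Your ampleness step and your equivariance paragraph are fine. (For ampleness: the average of an ample class lies in $NS(X)^G\otimes\mathbb{R}=\mathbb{R}H$.) However, the real content of the lemma is that $|H|$ is \emph{not} in the degenerate branch of O'Grady's dichotomy, and that is exactly the step you leave open. The criterion you propose to check there is also not the right one.

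On $S^{[2]}$ of a quartic with $H=L-\delta$, the degeneracy is witnessed by $\delta$ itself: $q(\delta)=-2$, divisibility $2$, and $(\delta,H)=2$. With divisibility $2$ the pairing with $H$ is even, so your ``$(v,H)=1$'' cannot occur. The case $(v,H)=0$ is just an ampleness wall, which you have already excluded. The same holds for $q(\ell)=0$, $(\ell,H)=1$, since this produces the $(-2)$-class $H-2\ell\perp H$.

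Equivalently, you must exclude a vector $v=H-\delta\in H^\perp\cap NS(X)=L_G$ with $q(v)=-4$ and $(H-v)/2\in H^2(X,\ZZ)^\vee$. A lower bound $\min L_G\ge 4$ says nothing here, because the obstructing vector has square exactly $-4$. You would need an actual gluing/divisibility analysis of the norm-$4$ vectors of $L_G$ relative to $H$.

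The fallback citation does not close the gap either. \cite[Table 1]{Wawak} gives an invariant class of square $2$, but that alone does not produce a genuine EPW sextic. For $\ZZ_2\times L_2(7)$ and $\ZZ_2^4:S_5$ (Appendix \ref{Ap:groups}) there are also invariant degree-$2$ polarisations, and there the map is a $6:1$ cover of a quadric.

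The paper bypasses the lattice analysis entirely. It writes down an explicit $L_3(4)$-invariant Lagrangian $A\subset\bigwedge^3\CC^6$ (Appendix \ref{Ap:codes}) and uses \cite{BMW} to see that $\PP(A)\cap G(3,6)=\emptyset$ and $A\notin\Delta$. Hence the double cover of $Y_A$ is a smooth K3$^{[2]}$-type fourfold with a symplectic $L_3(4)$-action and an invariant degree-$2$ polarisation. It then identifies this cover with $(X,H)$ by uniqueness of such a polarised pair. To make your route work, you must either carry out the divisibility check on $L_G$ or supply such an explicit non-degenerate model.
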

\begin{proof}
Since the polarisation $H$ is invariant, $L_3(4)$ acts on the space of its global sections. 
Similarly to~\cite{BilliWawak}, we find in Appendix \ref{Ap:codes} a symmetric Lagrangian space $A$ in $\bigwedge^3 \CC^6$ for the action of $L_3(4)$; as it is shown in~\cite{BMW} 
that  $\PP(A)$ does not intersect the Grassmanian $G(3,6)$ and is outside the locus $\Delta$, it follows that there exist a double EPW sextic with a symplectic action of $L_3(4)$. This has to be our example, as it is unique with this property.
\end{proof}
\begin{example}\label{modelofY}
In order to produce a suitable model of the EPW sextic $Y\subset\PP^5$, in our computations we consider the action of $L_3(4)$ on $\mathbb P^5$ generated by the matrices 
\[\left[\begin{array}{c c c c c c} 
-1 &0 &0 &0 &0 &0\\
0 &0 &1 &0 &0 &0\\
0 &1 &0 &0 &0 &0\\
0 &0 &0 &0 &1 &0\\
0 &0 &0 &1 &0 &0\\
0 &0 &0 &0 &0 &-1\end{array}\right],\quad 
\left[\begin{array}{c c c c c c}
0 &1 &0 &0 &0 &0\\
0 &-1 &0 &0 &1 &\zeta_3+1\\
0 &0 &-1 &0 &0 &0\\
0 &0 &-(\zeta_3+1) &0 &0 &1\\
-1 &-1 &0 &\zeta_3+1 &0 &0\\
0 &0 &0 &-1 &0 &0
\end{array}\right],\]
and take $Y\subset \mathbb P^5$ as the general EPW sextic invariant for this action. See also Appendix \ref{Ap:codes} for the Macaulay2 code describing $Y$.
\end{example}

\subsection{Group actions}
Assume that a group $G$ acts on an EPW sextic $Y$: then, on the \HK double cover $\pi\colon X\to Y$ we have an action of the group $\langle G,\iota\rangle$, where $\iota$ is the covering involution and acts non-symplectically on $X$. If $|G|$ is even and $X$ admits a symplectic action of $G$, it also admits a non-symplectic action of $G$ given by $\iota\circ G$; if instead $|G|$ is odd, the lift of the action to $X$ is unique. Therefore, we have the following:

\begin{rem}\label{rem:X_and_Y}\textit{Lifting group actions from $Y$ to $X$.}
Assume that $G$ fixes a point $y\in Y$ outside of the branch locus.
If $|G|$ is even, and $G$ acts symplectically on $X$, it might fix each one of the two points $x_1,x_2\in\pi^{-1}(y)$, or exchange them: in the latter case, the stabiliser of $x_i$ will be an index two subgroup $H\subset G$.
If $|G|$ is odd, then $x_1,x_2\in\pi^{-1}(y)$ will be fixed by $G$.
\end{rem}

In particular, given the Remark above, we need to understand how involutions on $Y$ lift to symplectic involutions on $X$.
\begin{rem}\label{fixed_locus_involution}
    \textit{The fixed locus of an involution on a double EPW sextic} \cite[Section 4]{CGKK}.
Assume the double EPW sextic $\pi\colon X\to Y$ admits a symplectic involution, and let $\iota$ be the induced involution on $\PP^5$ ($\iota(Y)=Y$): then the invariant subspaces for the action of $\iota$ are a copy of $\PP^1$ and a copy of $\PP^3$. It holds that $\PP^3\cap Y$ is the union $Q\cup K$ of a quadric surface and a quartic surface with 16 nodes (a Kummer surface). Recall that the singular locus $Y^{sing}$ is the branch locus of the double cover $\pi\colon X\to Y$; the intersection of $Y^{sing}$ with the invariant subspaces of $\iota$ consists of $Q\cap K$ and the 16 nodes of $K$ (so it is entirely contained in the invariant $\PP^3$). \\
A symplectic involution on a K3$^{[2]}$-type manifold always fixes a K3 surface and 28 isolated points \cite{Mongardi}; in our case, the K3 surface is the double cover $F$ of $Q$ (see also Appendix \ref{Ap:K3}), 12 of the isolated points are in $\pi^{-1}(\PP^1\cap Y)$ and the remaining 16 are preimages of the nodes of $K$.
\end{rem}

We are going to also use the following observations in our study of the orbits of points with non-trivial stabiliser.
\begin{lem}\label{lem:orbit_of_points}
Consider the action of a finite group $G$ on a set $\mathcal S$, and let $H$ be a subgroup of $G$: then it holds $Fix(G)\subset Fix(H)$. Moreover, denote $[H]$ the conjugacy class of $H$ in $G$, and assume $[H]$ has $|G|/|H|$ elements. Then, if a point $p$ is fixed by $H$, its stabiliser is exactly $H$, unless $p$ is also fixed by another subgroup in $[H]$.
\end{lem}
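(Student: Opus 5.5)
The first claim is formal: if $g\in G$ fixes every point of $Fix(G)$ then in particular every $h\in H$ does, so $Fix(G)\subset Fix(H)$. I would record it immediately and spend the effort on the statement about stabilisers.

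For the second claim, let $p$ be fixed by $H$ and write $K=\mathrm{Stab}_G(p)$, so that $H\subseteq K$. I want to show that if $K\neq H$, then $p$ is fixed by some subgroup $H'\in[H]$ with $H'\neq H$. The first step is to read the hypothesis correctly. The conjugacy class $[H]$ has $|G:N_G(H)|$ elements, so $|[H]|=|G|/|H|$ means $|N_G(H)|=|H|$, that is, $N_G(H)=H$: $H$ is self-normalising.

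Now assume $K\supsetneq H$ and pick $k\in K\setminus H$. Since $N_G(H)=H$, the element $k$ does not normalise $H$, so $H'=kHk^{-1}$ is a subgroup in $[H]$ different from $H$. For every $h\in H$ we have $khk^{-1}\cdot p=kh\cdot p=k\cdot p=p$, using first that $k^{-1}\in K$ fixes $p$, then that $h$ fixes $p$, then that $k$ fixes $p$. So $H'$ fixes $p$, which is exactly the exceptional case allowed in the statement. Contrapositively, if $p$ is fixed by no subgroup of $[H]$ other than $H$, then $K=H$.

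I do not see a real obstacle here. The one point needing care is translating the cardinality assumption into $N_G(H)=H$ via the orbit–stabiliser theorem for the conjugation action of $G$ on its subgroups; once that is done the argument is immediate. I would also remark that the converse holds: if $p$ is fixed by $H$ and by some $H'\in[H]$ with $H'\neq H$, then $K\supseteq\langle H,H'\rangle\supsetneq H$. Thus, under the stated hypothesis, the stabiliser of a point fixed by $H$ is exactly $H$ if and only if no other conjugate of $H$ fixes it.
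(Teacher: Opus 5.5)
Your proof is correct and complete. The cardinality hypothesis is exactly $N_G(H)=H$ (orbit--stabiliser for the conjugation action), and conjugating $H$ by any $k\in \mathrm{Stab}_G(p)\setminus H$ gives a conjugate $kHk^{-1}\neq H$ that fixes $p$. The paper states this lemma without proof as an elementary fact, and your argument is the standard one it implicitly relies on; your converse remark is also correct.
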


\section{Points of $\texorpdfstring{X}{X}$ with nontrivial stabiliser under the action of $\texorpdfstring{L_3(4)}{L3(4)}$}\label{sec:fixed_points}

We begin by studying the locus of points of a K3$^{[2]}$-type manifold that have nontrivial stabiliser under the symplectic action of $L_3(4)$.
Recall that, since this group does not act symplectically on a K3 surface, its action on a K3$^{[2]}$-type manifold cannot be natural; moreover, the invariant lattice for this action (the first in \eqref{eq:invariant}) does not contain a class of square 6 and divisibility 2. Therefore, our group is not included in the lists of groups studied in~\cite{BGMM, Mazzon}).
However, as already discussed in the previous section, there exists a projective model of a K3$^{[2]}$-type manifold with a symplectic action of $L_3(4)$ as double EPW sextic, given by an $L_3(4)$-invariant polarization $H$ of Beauville-Bogomolov square $2$. Denote $\pi\colon X\to Y\subset \PP^5$ the projective map given by global sections of $H$: then, we can assume that $Y$, and the action of $L_3(4)$ on it, are as described in Example \ref{modelofY} (and implemented with the codes in Appendix \ref{Ap:codes}). We are going to rely on this projective model of $Y$ for many of our computations.
\begin{rem}\label{rem:preimages_are_fixed_by_iota}
    Notice that we are able to intersect the invariant subspaces for any element $g\in L_3(4)$ with the EPW sextic $Y$. In particular, for each involution $\iota_k$ we can find explicitly the surfaces $Q_k$ and $K_k$ and the 6 points given by the intersection of $Y$ with the invariant subspace $\PP^1_k$, as described in Remark \ref{fixed_locus_involution}, allowing us to also infer whether two points $x_1,x_2$ on the double cover $X$ of $Y$ are exchanged or not, assuming they lie on the fiber of a regular point $y\in Y$ fixed by an involution $\iota_k$: indeed, in this case $x_1,x_2$ would be fixed by the lift of $\iota_k$ only if $y\in Q_k\cup (Y\cap\PP^1_k)$, as the preimages of regular points of $K_k$ are not fixed.
\end{rem}

The elements of $L_3(4)$ have order 2, 3, 4, 5 and 7. The 315 involutions $\iota_k$ are all conjugate, and they generate the whole group. All elements of order 3 are also conjugated. There are two power-equivalent conjugacy classes of elements of order 5 and 7; recall however, that if $g$ has odd prime order, then its non-trivial powers have the same fixed locus (and, in our case, all these elements fix isolated points). 
Elements of order 4 are split in three non power-equivalent conjugacy classes, and understanding their interaction is crucial in determining the singularities of $X/L_3(4)$.

\begin{rem}\label{rem:fixedK3}\textit{The surface $S$.}
The singular locus of $X/L_3(4)$ contains a surface $S$, that is the image in $X/L_3(4)$ of the K3 surfaces $F_k$ fixed by each of the conjugate involutions $\iota_k$ (see Remark \ref{fixed_locus_involution}). The generic point of $S$ is an ordinary double point for $X/L_3(4)$.
\end{rem}

\subsection{Fixed points of subgroups of $\texorpdfstring{L_3(4)}{L3(4)}$ generated by two involutions}\label{sec: subgroups_stabilising_points}
The surface $S$ introduced in Remark \ref{rem:fixedK3} contains some points that are not ordinary double points for the quotient manifold: by Lemma \ref{lem:orbit_of_points} these points lie in the intersection between two K3 surfaces $F_j, F_k$,
and to study these singularities, we have to look at the subgroups of $L_3(4)$ generated by two involutions.

\begin{lem}\label{groups_gen_by_involutions}
Two non-commuting involutions $\iota_j,\iota_k\in L_3(4)$ generate one of the following groups: $\ZZ_2\times \ZZ_2$, the symmetric group $S_3$, and the dihedral groups $D_8, D_{10}$ ($D_n$ has cardinality $n$). Moreover, every element of order 3, 4, 5 is contained in a copy of $S_3, D_8, D_{10}$ respectively, and every copy of $\ZZ_2\times \ZZ_2$ is contained in a copy of $D_8$.
\end{lem}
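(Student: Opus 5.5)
The plan is to reduce everything to the elementary fact that two distinct involutions $a,b$ in any finite group generate a dihedral group of order $2m$, where $m$ is the order of the product $ab$. In the paper's notation this group is $D_{2m}$ for $m\ge 3$, and $\ZZ_2\times\ZZ_2$ for $m=2$. Since the element orders in $L_3(4)$ are $1,2,3,4,5,7$, we get $m\in\{2,3,4,5,7\}$. The case $m=1$ cannot occur for distinct involutions. (The statement says ``non-commuting'', but commuting distinct involutions give $\ZZ_2\times\ZZ_2$, so we treat both.)

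So the first step is to check whether $ab$ can have order $7$, i.e.\ whether $D_{14}\subset L_3(4)$. We exclude this by the normaliser structure: a Sylow $7$-subgroup $P\cong\ZZ_7$ has $N_{L_3(4)}(P)\cong \ZZ_7:\ZZ_3$ of order $21$. This follows from the ATLAS (the number of Sylow $7$-subgroups is $20160/21=960$), or from the Sylow count together with the fact that $x\mapsto x^{-1}$ is not a conjugation, since the two classes $7A,7B$ are mutually inverse and not power-equivalent via $-1$. Hence no involution inverts an element of order $7$, and $D_{14}\not\subset L_3(4)$. The same reasoning shows $D_{12}$ and $D_{6\cdot k}$ do not appear for other $m$, because $L_3(4)$ has no elements of order $6$; this is already covered by the list of element orders. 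This leaves $m\in\{2,3,4,5\}$, giving $\ZZ_2\times\ZZ_2$, $S_3=D_6$, $D_8$, $D_{10}$.

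For the ``moreover'' part we must show that every element $g$ of order $3,4,5$ is inverted by some involution $t$. Then $\langle t, tg\rangle=\langle g,t\rangle$ is dihedral of order $2\operatorname{ord}(g)$ and generated by two involutions. It suffices to show that $g$ is conjugate to $g^{-1}$ via an element $h$ that can be chosen to be an involution. We do this class by class, using the character table or the structure of Sylow normalisers:

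\begin{itemize}
\item[(3)] $N(\ZZ_3)$ contains $S_3$-type elements: the centraliser of a $3$-element has order $9$ and the class $3A$ has size $2240=20160/9$. So $|N(\langle g\rangle)|=18$, giving an element inverting $g$. A $2$-part of order $2$ exists in $N/C$, and since the centraliser has odd order we can pick it to be an involution.
\item[(5)] Similarly, $|C(g)|=5$ and classes $5A,5B$ are real (each closed under inversion, while $g^2$ lies in the other class). So $|N(\langle g\rangle)|=10$, and $N\cong D_{10}$.
\item[(4)] The three classes $4A,4B,4C$ are real with centralisers of order $16$. We find the inverting involution inside a Sylow $2$-subgroup, which has order $64$ and is a semidirect product $\ZZ_2^4:\ZZ_2^2$ realised by unitriangular matrices. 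Alternatively we argue within the explicit matrix model of Example \ref{modelofY} by computer.
\end{itemize}

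Finally, any $\ZZ_2\times\ZZ_2=\langle a,b\rangle$ lies in an elementary abelian $2$-group, i.e.\ a Sylow $2$-subgroup's $\ZZ_2^4$. We need an element $c$ of order $4$ with $c^2\in\{a,b,ab\}$ and an involution normalising appropriately, so that $\langle a,b\rangle\subset D_8$. Concretely, we would show that some involution $t$ exists with $atat$ of order $2$ and $at$ of order $4$.

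The main obstacle is this last claim and the order-$4$ case. These require knowledge of the $2$-local structure of $L_3(4)$ (its involutions are transvections, and the Klein four-subgroups come in a few conjugacy classes). The approach I would try first is to verify them directly in GAP/Magma on the explicit generators of Example \ref{modelofY}, enumerating conjugacy classes of Klein four-groups and of cyclic subgroups of order $4$ and checking containment in a $D_8$. This mirrors how the rest of the paper handles group-theoretic facts computationally.
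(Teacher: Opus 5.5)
The paper states this lemma without proof (it is implicitly a GAP check), so your argument has to stand on its own. Most of it does.

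\textbf{What works.} The reduction to $\langle a,b\rangle\cong D_{2\operatorname{ord}(ab)}$ is correct. So is the exclusion of $m=7$: since $7A^{-1}=7B$, no element of order $7$ is inverted by anything. The order $3$ and order $5$ cases, via $|C(g)|=9,5$ and the oddness of $|C(g)|$, are fine.

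\textbf{Order 4.} You only announce this case, but it closes in one line in the unitriangular Sylow subgroup $P$. Write $u(a,b,c)$ for the unitriangular matrix over $\mathbb{F}_4$ with $a,b$ on the superdiagonal and $c$ in the corner. Then $u(a,b,c)u(a',b',c')=u(a+a',b+b',c+c'+ab')$. Hence $g=u(a,b,c)$ has order $4$ if and only if $ab\neq 0$, and $g^2=u(0,0,ab)$. The involution $t=u(a,0,0)$ satisfies $tgt=g\,u(0,0,ab)=g^3$, so $g$ lies in a $D_8$.

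\textbf{The gap.} The genuine gap is the final assertion, and a computer check will not close it. The assertion is false, so the verification you propose would return a counterexample.

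Take $V=Z(P)=\{u(0,0,c)\}\cong\ZZ_2\times\ZZ_2$, the root subgroup of transvections with a fixed centre and axis. The $315$ involutions are all conjugate, so each has centraliser of order $20160/315=64$. Hence $C(z)=P$ for every involution $z\in V$.

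Suppose $V\subset D\cong D_8$. Both Klein four-subgroups of $D_8$ contain $Z(D_8)$, so the central involution $z_0$ of $D$ lies in $V$. Therefore $D\subseteq C(z_0)=P$, and $V\subseteq Z(P)\cap D\subseteq Z(D)$. This contradicts $|Z(D)|=2$. So this whole conjugacy class of Klein four-groups (one for each Sylow $2$-subgroup) lies in no $D_8$.

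\textbf{What can be proved.} Your method does work for the other six conjugacy classes of Klein four-groups: three transvections with a common centre and distinct axes, or with a common axis and distinct centres. For example,
\[
\{u(1,0,0),\,u(0,0,1),\,u(1,0,1)\}\subset\langle u(1,1,0),\,u(1,0,0)\rangle\cong D_8 .
\]
The lemma must therefore be weakened to exclude the root subgroups. The paper's later use of it needs a separate check for that class, namely the claim that all points fixed by a $\ZZ_2\times\ZZ_2$ are found inside the $D_8$ analysis.

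\textbf{A smaller point.} Your ``concrete'' criterion is incomplete even where it applies. Asking that $at$ have order $4$ only places $a$ in a $D_8$. You also need $b\in\{(at)^2,\,a(at)^2\}$.
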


\begin{rem}\label{rem:natural_action}
\textit{Naturality.} The subgroups introduced in Lemma \ref{groups_gen_by_involutions} indeed act naturally on $X$. For $D_8$ and $D_{10}$ this follows directly from~\cite[Table 12]{HM}: these groups can act only naturally on a K3$^{[2]}$-type manifold. Symplectic automorphisms of order 3 admit a second possible (non-natural) action, that fixes an abelian surface: however, the elements of order 3 of $L_3(4)$ fix only isolated points on $Y$, so we can conclude that the action of $S_3$ on $X$ is natural too (see Appendix \ref{Ap:fix}).
\end{rem}

We will first consider, in Sections \ref{sec:S3}, \ref{sec:D8_on_Y}, \ref{sec:D10} the action of \emph{one} representative element $H$ in each of the conjugacy classes of the nonabelian subgroups of $L_3(4)$ presented in Lemma \ref{groups_gen_by_involutions}: for each $H$, we are going to compute the locus of points in $X$ with nontrivial stabiliser under its action (thus, since any copy of $\ZZ_2\times \ZZ_2$ is contained in some copy of $D_8$, the points the former fixes will be found while studying the latter). This is done by comparing the natural action of $H$ on a K3$^{[2]}$-type manifold, with the action the projective model $Y\subset \mathbb{P}^5$ given in Example \ref{modelofY} and implemented in Appendix \ref{Ap:codes}. In particular, using our computer model of $Y$, we can also see the interplay between the different subgroups of $L_3(4)$ isomorphic to $H$, that may act on the same set of points; moreover, since any subgroup of $L_3(4)$ (other than $\ZZ_2$) fixes isolated points $p\in Y$, for each such $p$ we can directly compute its stabiliser, that might be bigger than $H$ itself (see Section \ref{sec:points_bigger_stabiliser}). On the other hand, understanding the natural actions of the groups presented in Lemma \ref{groups_gen_by_involutions} is a fundamental step to deduce the points with nontrivial stabiliser on the double cover $X\rightarrow Y$, according to Remarks \ref{rem:X_and_Y} and \ref{rem:preimages_are_fixed_by_iota}.
We will see later, in Section \ref{sec:singularities}, that the action of $L_3(4)$ may further identify some of the points with nontrivial stabiliser found in this section.

\subsubsection{The action of $S_3$}\label{sec:S3}
The group $S_3$ contains 3 involutions $\iota_1,\iota_2,\iota_3$, and the product of any two of them gives an element $g_3$ of order 3 or its square. Recall from~\cite[Theorem 7.2.7]{MongardiThesis} that a natural symplectic automorphism of order 3 on a K3$^{[2]}$-type manifold fixes 27 isolated points. The invariant subspaces for the action of $g_3$ on $\PP^5$ are three copies of $\PP^1$, each intersecting $Y$ in one singular point $p$ and four regular points $q_1,\dots,q_4$.  
Moreover, each $\PP^1$ intersects 3 different quadric surfaces fixed by involutions (as described in Remark \ref{fixed_locus_involution}), such that $\bigcap_{i=1,2,3}Q_i=\bigcap_{i=1,2,3}Q_i\cap\PP^1$ consists of the singular point $p$
and one of the regular points, say $q_1$. Indeed, the element $g_3$ is the center of three different copies of $S_3$. Therefore, each copy of $S_3$ fixes 3 points on $X$ (the intersection of the three K3 surfaces  $F_i$ fixed by its involutions), leaving 18 isolated points fixed by the lift of $g_3$ on $X$. 

\subsubsection{The action of $D_8$}\label{sec:D8_on_Y}
The group $D_8$ contains two elements of order 4; $\alpha$ and $\alpha^3$, and 5 involutions $\alpha^2, \iota_1,\dots,\iota_4$ with the property that $\iota_1\iota_3=\iota_2\iota_4=\alpha^2$, while any other product $\iota_i\iota_j$ has order 4.\\

\textit{On a K3 surface.} 
The group $D_8$ acts uniquely on a K3 surface $\Sigma$ \cite{Hashimoto}: to describe its action, we compare the singularities of the quotient $\Sigma/D_8$ (given in~\cite[Theorem 3]{Xiao}) to that of the quotient of $\Sigma$ by its normal subgroups $\ZZ_2, \ZZ_4$. 

We find that each involution in $D_8$ fixes exactly 8 points on $\Sigma$, and these points are all distinct, giving 40 points of order 2 on $\Sigma$. Of the 8 points fixed by $\alpha^2$, 4 are also fixed by $\alpha$. Denote $Fix(\iota_k)=\{r_1^k,\dots, r_8^k\},\ Fix(\alpha^2)=\{p_1,\dots,p_4,q_1,\dots,q_4\}$, and assume $\alpha$ fixes $\{p_1,\dots,p_4\}$, while $\alpha(q_1)=q_2,\ \alpha(q_3)=q_4$. Then, since $\iota_1\alpha^2=\iota_3$, $\alpha^2$ acts as $\iota_1$ on $Fix(\iota_3)$, and as $\iota_3$ on $Fix(\iota_1)$, while $\iota_1,\iota_3$ act the same on $Fix(\alpha^2)$; the same holds for $\iota_2, \iota_4$ instead. Moreover, since $\iota_1\iota_2=\alpha$, we can assume it holds $\iota_1(p_1)=\iota_2(p_1)=p_2,\iota_1(p_3)=\iota_2(p_3)=p_4$, but $\iota_1(q_1)=q_4,\iota_1(q_2)=q_3$ while $\iota_2(q_1)=q_3,\iota_2(q_2)=q_4$.\\

\textit{On a K3$^{[2]}$-type manifold.} The action of $D_8$ on a K3$^{[2]}$-type manifold is always natural \cite{HM}, so (by~\cite[Proposition 2.6]{Fujiki}, and the fact that K3 surfaces and points deform to K3 surfaces and points) 
it will have the same fixed locus as the induced action of $D_8$ on $\Sigma^{[2]}$. Denote $\Delta$ the divisor of $\Sigma^{[2]}$ parametrising non-reduced subschemes, $F_k$ the K3 surface fixed by $\iota_k$ (it is the closure of the set of points $\{[x,\iota_k(x)]\}$) and $F_\alpha$ the K3 surface fixed by $\alpha^2$.
Notice that $F_\alpha$ intersects $\Delta$ in 4 lines over the points $[p_i,p_i]$, on which the action of $\alpha$ fixes 2 points each (8 points on $F_\alpha$ in total), and 4 lines $[q_i,q_i]$, exchanged by the action of $\alpha$. The 28 points fixed by $\alpha^2$ are of one of the following types: $[p_i,q_j]$ for any $i,j$ (16 points), $[p_i,p_j],[q_i,q_j]$ for $i\neq j$ (12 points); of these points, $\alpha$ fixes all $[p_i,p_j]$, and $[q_1,q_2],[q_3,q_4]$. The 28 points fixed by $\iota_k$ are $\{[r_i^k,r_j^k]\}$ for $i\neq j$.

The intersection of the surfaces fixed by any two commuting involutions consists of 4 of the isolated points fixed by their product. In particular, $F_1\cap F_3=\{[p_1,p_2],[p_3,p_4],$ $[q_1,q_4],[q_2,q_3]\}$, while $F_2\cap F_4=\{[p_1,p_2],[p_3,p_4],[q_1,q_3],[q_2,q_4]\}$; taking two noncommuting involutions we obtain the only two points fixed by the whole group $D_8$,
\[F_1\cap F_2=\bigcap_{k=1,\dots,4}S_k=\{[p_1,p_2],[p_3,p_4]\}.\]
Notice last that the six points $[p_1,p_3],[p_2,p_4],[p_1,p_3],[q_1,q_2],[q_3,q_4]$ are isolated points fixed by $\alpha$, but by none of the involutions $\iota_k$.\\

\textit{On the double EPW sextic.} There are three conjugacy classes of elements of order 4 in $L_3(4)$, and every element of order 4 is contained in a copy of $D_8$. Denote $\alpha,\beta,\gamma$ a choice of representatives for the three classes, and consider their action on $\PP^5$: then, $\alpha$ has three copies of $\PP^1$ as invariant subspaces, while the invariant subspaces of $\beta$ or $\gamma$ are two points and two copies of $\PP^1$ (see also the table in Appendix \ref{Ap:fix}). We are going to study separately the action of $D_8$ on $Y$ when the normal element of order 4 is $\alpha$ (\textbf{case 1}), or $\beta,\gamma$ (both acting as in \textbf{case 2}): notice that, while cases 1 and 2 are different on $Y$, they both lift on $X$ to a deformation of the natural action described above.
\begin{itemize}
    \item \textbf{case 1:} Two of the invariant $\PP^1$ subspaces for the action of $\alpha$ on $\PP^5$ are contained in the quadric surface $Q_{\alpha}\subset Y$ fixed by $\alpha^2$, and each contains 4 singular points. The remaining $\PP^1$ invariant subspace is the intersection of four copies of $\PP^3$, each fixed by an involution $\iota_k$ in the copy of $D_8$ where $\alpha$ is normal; $\PP^1\cap Y$ consists of 6 regular points $p_1,\dots, p_6$, of which $p_1=\bigcap_k Q_k$ is the common intersection of the quadric surfaces fixed by each $\iota_k$, $p_2=Q_1\cap Q_3$, $p_3=Q_2\cap Q_4$. 
    \begin{rem}\label{liftedaction}\textit{Lifting the action.} Notice that $p_2,p_3$ are fixed by $D_8$ on $Y$ but, comparing this action with the natural action of $D_8$ on a K3$^{[2]}$-type manifold, we see that the two points on each of their fibers have to be exchanged on $X$ (so their stabiliser is $(\ZZ_2)^2$). Similarly, the points on the fiber of $p_1$ are fixed by $D_8$, and the points on the fibers of $p_4,p_5,p_6$, (which are regular points in $\PP^3_k$, and are not contained in the respective quadric surfaces) are fixed by $\ZZ_4$.
    \end{rem}
    Consider now the stabilisers of these points for the action of the whole $L_3(4)$. The double cover $F_\alpha\rightarrow Q_{\alpha}$ restricts to a double cover of each $\PP^1$ made of two rational curves intersecting in 4 points, that are the pre-image of the singular points in $Q_{\alpha}$; the stabiliser of these 8 points in $F_\alpha$ is the lift of $\alpha$ to $X$. On the set $p_1,\dots, p_6$ there are \emph{four} copies of $D_8$ acting, such that each $p_i\in \{p_1,\dots,p_4\}$ is the common intersection of the quadric surfaces fixed by the involutions $\iota_k$ for exactly one of these copies of $D_8$; denoting them $(D_8)_i$ accordingly, we see that $(D_8)_1$ and $(D_8)_4$ have no common involution (other than $\alpha^2$), and similarly $(D_8)_2$ and $(D_8)_3$; at the same time, $(D_8)_2$ and $(D_8)_3$ share two involutions each with each of $(D_8)_1$ and $(D_8)_4$, so that there are only 8 different involutions (plus $\alpha^2$) generating these 4 copies of $D_8$. The stabiliser of $p_i\in \{p_1,\dots,p_4\}$ in $L_3(4)$ is $D_8\times \mathbb Z_2$, but by Remark \ref{liftedaction} we deduce that on $X$ each of the two points on the fiber of $p_i$ is stabilised only by $D_8$.
    
    The stabiliser of the two remaining points $p_5,p_6$ is a group of order 32, and it will be treated separately. \\ 
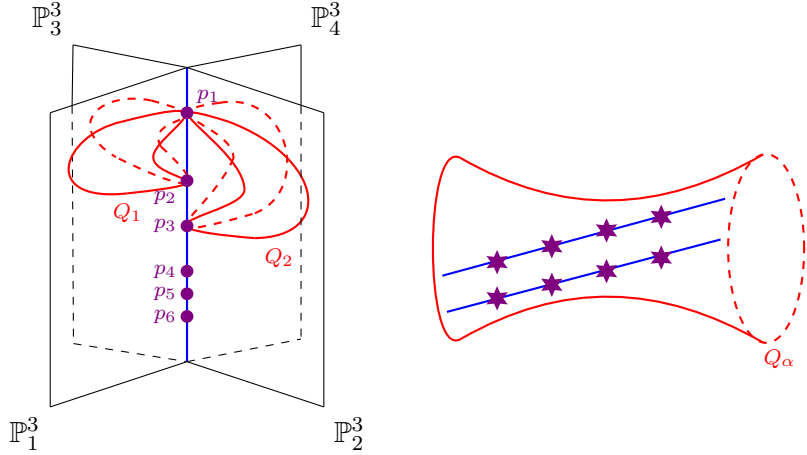
\begin{figure}
\centering
\centering
\begin{tikzpicture}[scale=0.6]
  % Coordinates
  \coordinate (A) at (2.00, 8.00);
  \coordinate (B) at (2.00, 1.50);
  \coordinate (C) at (5.00, 9.00);
  \coordinate (D) at (5.00, 2.50);
  \coordinate (E) at (8.00, 8.00);
  \coordinate (F) at (8.00, 1.50);
  \coordinate (G) at (2.50, 9.50);
  \coordinate (H) at (2.48, 8.15);
  \coordinate (I) at (2.51, 2.90);
  \coordinate (P) at (5.92, 7.80);
  \coordinate (Q) at (6.48, 5.23);
  \coordinate (R) at (8.00, 7.00);
  \coordinate (S) at (8.25, 5.25);
  \coordinate (T) at (6.26, 6.30);
  \coordinate (U) at (6.25, 5.75);
  \coordinate (V) at (3.25, 5.50);
  \coordinate (W) at (3.75, 8.50);
  \coordinate (X) at (6.25, 7.25);
  \coordinate (Y) at (4.22, 8.27);
  \coordinate (Z) at (3.15, 8.50);
  \coordinate (AA) at (2.25, 7.75);
  \coordinate (AB) at (3.44, 7.02);
  \coordinate (AC) at (4.43, 7.25);
  \coordinate (AF) at (6.07, 7.76);
  \coordinate (AG) at (4.00, 8.00);
  \coordinate (AD) at (5.25, 6.50);
  \coordinate (AE) at (5.50, 6.00);
  \coordinate (AH) at (3.56, 7.80);
  \coordinate (AI) at (3.70, 6.22);
  \coordinate (AJ) at (2.25, 7.50);
  \coordinate (AK) at (1.75, 6.00);
  \coordinate (AL) at (4.26, 7.02);
  \coordinate (AM) at (4.25, 6.50);
  \coordinate (AN) at (6.25, 6.50);
  \coordinate (AO) at (6.25, 8.50);
  \coordinate (AP) at (4.25, 7.50);
  \coordinate (AQ) at (5.00, 8.00);
  \coordinate (AR) at (5.00, 6.50);
  \coordinate (AS) at (5.00, 5.50);
  \coordinate (AT) at (5.00, 4.50);
  \coordinate (C_1) at (5.00, 9.00);
  \coordinate (D_1) at (5.00, 2.50);
  \coordinate (J) at (7.50, 9.50);
  \coordinate (K) at (7.50, 8.15);
  \coordinate (L) at (7.50, 3.00);
  \coordinate (M) at (5.72, 8.22);
  \coordinate (N) at (7.25, 8.50);
  \coordinate (O) at (7.75, 6.25);
  \coordinate (AU) at (6.46, 5.77);
  \coordinate (AV) at (4.00, 8.00);
  \coordinate (AW) at (6.00, 7.75);
  \coordinate (AX) at (6.02, 7.00);
  \coordinate (AY) at (5.50, 5.75);
  \coordinate (BA) at (3.75, 5.00);
  \coordinate (BB) at (5.00, 4.00);
  \coordinate (BC) at (5.00, 3.50);

  % Drawing
  \draw (A) -- (B);
  \draw (A) -- (C);
  \draw[blue, thick] (C_1) -- (D_1);
  \draw (B) -- (D);
  \draw (C) -- (E);
  \draw (E) -- (F);
  \draw (D) -- (F);
  \draw (G) -- (C);
  \draw (G) -- (H);
  \draw[dashed] (H) -- (I);
  \draw[dashed] (I) -- (D);
  \draw[red, thick] (P) .. controls (8.00, 7.00) and (8.25, 5.25) .. (Q);
  \draw[red, thick] (T) .. controls (6.25, 5.75) and (3.25, 5.50) .. (Q);
  \draw[red, thick] (P) .. controls (3.75, 8.50) and (6.25, 7.25) .. (T);
  \draw[red, thick, dashed] (Y) .. controls (3.15, 8.50) and (2.25, 7.75) .. (AB);
  \draw[red, thick, dashed] (Y) .. controls (6.07, 7.76) and (4.00, 8.00) .. (AC);
  \draw[red, thick, dashed] (AC) .. controls (5.25, 6.50) and (5.50, 6.00) .. (AB);
  \draw[red, thick] (AH) .. controls (2.25, 7.50) and (1.75, 6.00) .. (AI);
  \draw[red, thick] (AL) .. controls (4.25, 6.50) and (6.25, 6.50) .. (AI);
  \draw[red, thick] (AH) .. controls (6.25, 8.50) and (4.25, 7.50) .. (AL);
  \draw (C) -- (J);
  \draw (J) -- (K);
  \draw[dashed] (K) -- (L);
  \draw[dashed] (D) -- (L);
  \draw[red, thick, dashed] (M) .. controls (7.25, 8.50) and (7.75, 6.25) .. (AU);
  \draw[red, thick, dashed] (M) .. controls (4.00, 8.00) and (6.00, 7.75) .. (AX);
  \draw[red, thick, dashed] (AX) .. controls (5.50, 5.75) and (3.75, 5.00) .. (AU);
  \fill[violet] (AQ) circle (4pt);
  \node[above right, violet] at (AQ) {$\scriptstyle p_1$};
  \fill[violet] (AR) circle (4pt);
  \node[below left, violet] at (AR) {$\scriptstyle p_2$};
  \fill[violet] (AS) circle (4pt);
  \node[left, violet] at (AS) {$\scriptstyle p_3$};
  \fill[violet] (AT) circle (4pt);
  \node[left, violet] at (AT) {$\scriptstyle p_4$};
  \fill[violet] (BB) circle (4pt);
  \node[left, violet] at (BB) {$\scriptstyle p_5$};
  \fill[violet] (BC) circle (4pt);
  \node[left, violet] at (BC) {$\scriptstyle p_6$};
  \node[below left, black] at (B) {$\mathbb P^3_1$};
  \node[below right, black] at (F) {$\mathbb P^3_2$};
  \node[above left, black] at (G) {$\mathbb P^3_3$};
  \node[above right, black] at (J) {$\mathbb P^3_4$};
  \node[below right, red] at (Q) {$\scriptstyle Q_2$};
  \node[below, red] at (AI) {$\scriptstyle Q_1$};

%hyperboloid (quadric Q_\alpha)
  \node[draw, thick, red, dashed, ellipse, minimum width=1cm, minimum height=2.5cm, outer sep=0] (ell) at (17.7,5) {};
\draw[red, thick] (ell.92) 
    to[out=210,in=-30] (11,7) 
    to[out=150,in=210, looseness=.6] (11,3)
    to[out=30,in=150] (ell.-92);
\node[below, red] at (18,3) {$\scriptstyle Q_\alpha$};

\draw[blue, thick] (10.6,4.4) to (16.8,6.1);
\draw[blue, thick] (10.7,3.6) to (16.7,5.2);

\node [star, star points=6, star point ratio=2, inner sep=0.15em, violet, fill, minimum size=3pt] at (11.8,4.7) {};
\node [star, star points=6, star point ratio=2, inner sep=0.15em, violet, fill, minimum size=3pt] at (13,5.05) {};
\node [star, star points=6, star point ratio=2, inner sep=0.15em, violet, fill, minimum size=3pt] at (14.2,5.4) {};
\node [star, star points=6, star point ratio=2, inner sep=0.15em, violet, fill, minimum size=3pt] at (15.4,5.7) {};

\node [star, star points=6, star point ratio=2, inner sep=0.15em, violet, fill, minimum size=3pt] at (11.8,3.9) {};
\node [star, star points=6, star point ratio=2, inner sep=0.15em, violet, fill, minimum size=3pt] at (13,4.2) {};
\node [star, star points=6, star point ratio=2, inner sep=0.15em, violet, fill, minimum size=3pt] at (14.2,4.5) {};
\node [star, star points=6, star point ratio=2, inner sep=0.15em, violet, fill, minimum size=3pt] at (15.4,4.8) {};

\end{tikzpicture}
\caption{\centering The points of $Y$ with nontrivial stabiliser under the action of $D_8$ (case 1)} \label{fig:M1}
\end{figure}

    \item \textbf{case 2:} The invariant subspaces of the action of $\beta$ on $\PP^5$ that are isolated points coincide with the points $p_5,p_6$ of \textbf{case 1}. Denote $\ell_1,\ell_2$ the two $\PP^1$ invariant subspaces of $\beta$: each one cuts $Y$ in two singular points and two regular points; the four regular points are all contained in the quadric surface $Q_\beta$ fixed by $\beta^2$. The four quadric surfaces $Q_k$ (fixed each by an involution in the copy of $D_8$ where $\beta$ is normal) intersect in two singular points lying on $\ell_1$; the preimage of the two singular points in $\ell_2$, together with the preimages of $p_5,p_6$, are the isolated points fixed by $\beta$ on $X$. the intersection $Q_1\cap Q_3$ contains two more singular points, that do not lie in any invariant subspace of $\beta$, so that their preimages in $X$ are fixed by $(\ZZ_2)^2$ (similarly $Q_2\cap Q_4$). 
    
    As above, considering the stabilisers in $L_3(4)$ we see there are actually four copies of $D_8$ acting on the same set of points: their elements of order 4 are conjugated to $\beta$, and other than $\beta^2$ there are 8 different involutions at play, so that the copies of $D_8$ can be paired in ``complementaries'' (with only the central involution in common). The singular points on $\ell_2$ belong to quadrics in the $D_8$ complementary to the first one; the roles of the singular points in $\ell_1,\ell_2$, and in the intersections $Q_1\cap Q_3,Q_2\cap Q_4$  are interchanged for the other pair of complementary copies of $D_8$. As in \textbf{case 1}, also in this case we find 8 points on $X$ fixed by $D_8$, and 8 points fixed by $\ZZ_4$ lying on the K3 surface $F_\beta$ double cover of $Q_\beta$.
    \end{itemize}

\begin{figure}
\centering
\begin{tikzpicture}[scale=0.6]
%hyperboloid (quadric Q_\beta)
  \node[draw, thick, red, dashed, ellipse, minimum width=2.5cm, minimum height=1cm, outer sep=0] (ell) at (17.7,5) {};
\draw[red, thick] (ell.-4) 
    to[out=-120,in=110](19.7,0) % (11,0) 
    to[out=-100,in=-70, looseness=.6] (15.8,0)
    to[out=70,in=-70] (ell.182);
\node[below, red] at (15.5,0) {$\scriptstyle Q_\beta$};

\draw[blue, thick] (4,4) to (21,4); %top line
\node[left, blue] at (4,4) {$\scriptstyle \ell_1$};

\draw[blue, thick] (4,1) to (21,1); %bottom line
\node[left, blue] at (4,1) {$\scriptstyle \ell_2$};
  % Coordinates
  \coordinate (A) at (6.00, 4.00); %d1
  \coordinate (R) at (6.00, 4.00);
  \coordinate (B) at (11.5, 4.00); %d2
  \coordinate (Q) at (11.5, 4.00);  
  \coordinate (BB) at (15.9, 4.00); %reg.pts
  \coordinate (BC) at (19.4, 4.00); 
  \coordinate (II) at (23, 3.50); %isolated pt
  
  \coordinate (RR) at (6.00, 1.00); %points on bottom line
  \coordinate (QQ) at (11.5, 1.00); 
  \coordinate (BD) at (16.1, 1.00); %reg.pts
  \coordinate (BE) at (19.4, 1.00); 
  \coordinate (JJ) at (23, 1.5); %isolated pt
  
  \coordinate (E) at (8.00, 5.3); %f1
  \coordinate (H) at (9.25, 5.35); %f2  
  \coordinate (X) at (8.3, 2.65); %g1
  \coordinate (U) at (9.5, 2.65); %g2
  
  %C,D: controls for purple top
  \coordinate (C) at (7.75, 5.75); 
  \coordinate (D) at (9.3, 6.00); 
  %M,N: controls for purple bottom
  \coordinate (M) at (7.75, 4.75);
  \coordinate (N) at (9.75, 4.75);

%F,G: controls for orange bottom 1
  \coordinate (F) at (7.00, 5.25);
  \coordinate (G) at (7.25, 6.50);
%I,J: controls for orange bottom 2
  \coordinate (I) at (8.4, 5.8);
  \coordinate (J) at (8.85, 5.8);
%I,J: controls for orange bottom 3  
  \coordinate (K) at (10.25, 6.5);
  \coordinate (L) at (10.5, 5.25);
%O,P: controls for orange top
  \coordinate (O) at (6.25, 7.5);
  \coordinate (P) at (11.50, 7.5);
  
%AC,AD: controls for green top  
  \coordinate (AC) at (9.8, 3.25);
  \coordinate (AD) at (7.8, 3.25);
%S,T: controls for green bottom
  \coordinate (S) at (9.8, 2.25);
  \coordinate (T) at (8.2, 2.00);  
  
%V,W: controls for blue top 3  
  \coordinate (V) at (10.5, 2.75);
  \coordinate (W) at (10.3, 1.50);
%Y,Z: controls for blue top 2  
  \coordinate (Y) at (9.1, 2.2);
  \coordinate (Z) at (8.7, 2.2);
%AA,AB: controls for blue top 1  
  \coordinate (AA) at (7.2, 1.5);
  \coordinate (AB) at (7., 2.75);
%AE,AF: controls for blue bottom
  \coordinate (AE) at (11.3, 0.5);
  \coordinate (AF) at (6, 0.5);

  % Drawing
  %Q3
  \draw[black, thick] (A) .. controls (C) and (D) .. (B); %top
  \draw[black, thick] (A) .. controls (M) and (N) .. (B); %bottom
  \node[above right, black] at (B) {$\scriptstyle Q_3$};

  %Q1
  \draw[red, thick] (A) .. controls (F) and (G) .. (E); %bottom1
  \draw[red, thick] (E) .. controls (I) and (J) .. (H); %bottom2
  \draw[red, thick] (H) .. controls (K) and (L) .. (B); %bottom3
  \draw[red, thick] (A) .. controls (O) and (P) .. (B); %top
  \node[above right, red, xshift=7pt] at (L) {$\scriptstyle Q_1$}; 
  
  %Q4
  \draw[red, thick] (Q) .. controls (S) and (T) .. (R); %bottom
  \draw[red, thick] (Q) .. controls (AC) and (AD) .. (R); %top
  \node[below right, red] at (Q) {$\scriptstyle Q_4$};

  %Q3
  \draw[black, thick] (Q) .. controls (V) and (W) .. (U); %t3
  \draw[black, thick] (U) .. controls (Y) and (Z) .. (X); %t2
  \draw[black, thick] (X) .. controls (AA) and (AB) .. (R); %t1
  \draw[black, thick] (Q) .. controls (AE) and (AF) .. (R); %bottom
  \node[above right, black, xshift=3pt] at (W) {$\scriptstyle Q_2$};

\node [star, star points=6, star point ratio=2, inner sep=0.15em, violet, fill, minimum size=3pt] at (E) {};
\node [star, star points=6, star point ratio=2, inner sep=0.15em, violet, fill, minimum size=3pt] at (H) {};
\node [star, star points=6, star point ratio=2, inner sep=0.15em, violet, fill, minimum size=3pt] at (R) {};
\node [star, star points=6, star point ratio=2, inner sep=0.15em, violet, fill, minimum size=3pt] at (Q) {};
\node [star, star points=6, star point ratio=2, inner sep=0.15em, violet, fill, minimum size=3pt] at (RR) {};
\node [star, star points=6, star point ratio=2, inner sep=0.15em, violet, fill, minimum size=3pt] at (QQ) {};
\node [star, star points=6, star point ratio=2, inner sep=0.15em, violet, fill, minimum size=3pt] at (X) {};
\node [star, star points=6, star point ratio=2, inner sep=0.15em, violet, fill, minimum size=3pt] at (U) {};

\fill[violet] (BB) circle (4pt);
\fill[violet] (BC) circle (4pt);
\fill[violet] (BD) circle (4pt);
\fill[violet] (BE) circle (4pt);
\fill[blue] (II) circle (4pt);
\fill[blue] (JJ) circle (4pt);
\node[right, blue] at (II) {$\scriptstyle p_5$}; 
\node[right, blue] at (JJ) {$\scriptstyle p_6$};  

\end{tikzpicture}
\caption{\centering The points of $Y$ with nontrivial stabiliser under the action of $D_8$ (case 2)} \label{fig:M2}
\end{figure}
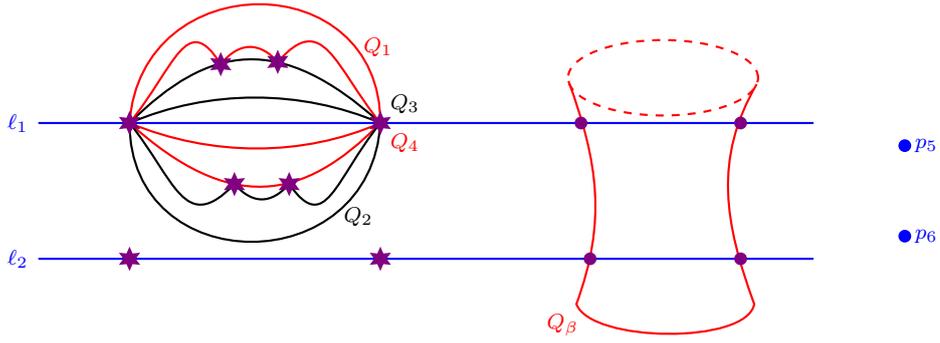

\subsubsection{The action of $D_{10}$}\label{sec:D10} The group $D_{10}$ contains 5 involutions $\iota_1,\dots,\iota_5$ (the reflections over symmetry axes of a pentagon), and the product of any two of them gives an element $g_5$ of order 5 or one of its powers.
Each $\iota_k$ fixes a $\PP^3$ and a $\PP^1$ in $\PP^5$ (by Remark \ref{fixed_locus_involution}), while the invariant subspaces for the action of $g_5$ on $\PP^5$ are $\PP^1_g$ and 4 isolated points $q_1,\dots,q_4$; it holds 
\[\bigcap_k \PP^3_k=\PP^1_g,\] 
and the $\PP^1_k$ do not intersect any other invariant subspace, nor do they intersect each other. Recall from~\cite{MongardiThesis} that a symplectic automorphism $g$ of order 5 on a K3$^{[2]}$-type manifold fixes 14 isolated points. The 14 points fixed by $g$ on $X$ are obtained from the intersection of the invariant subspaces of $g_5$ with $Y$ as follows: 8 as preimage in $X$ of $q_1,\dots,q_4$, the remaining 6 as preimage of $\PP^1_g\cap Y$, that consists of two regular points $r_1,r_2$ and two singular points $s_1,s_2$. 

Moreover, consider the five quadric surfaces $Q_k\subset \PP^3_k$: then $\PP^1_g\cap\big(\bigcap_k Q_k\big)=\{s_1,s_2\}$, so these two points are fixed by the whole group $D_{10}$, while the two regular points are contained in the quartic surfaces $K_k\subset\PP^3_k$, so by Remark \ref{fixed_locus_involution}
they give rise to 4 points on $X$ such that the two points on a fiber are exchanged (and so, they are fixed only by the automorphism of order 5).

Therefore, on $X$ we get 2 points lying on the intersection of five K3 surfaces $F_k$ and fixed by $D_{10}$, and 12 isolated points fixed by $\ZZ_5$.

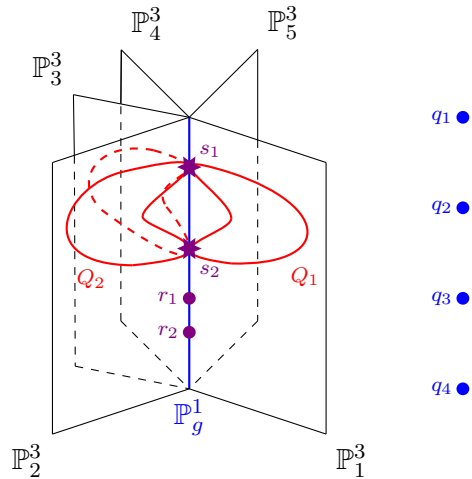
\begin{figure}
\centering
\begin{tikzpicture}[scale=0.6]
  % Coordinates
  \coordinate (A) at (2.00, 8.00);
  \coordinate (B) at (2.00, 2.00);
  \coordinate (C) at (5.00, 9.00);
  \coordinate (D) at (5.00, 3.00);
  \coordinate (E) at (8.00, 8.00);
  \coordinate (F) at (8.00, 2.00);
  \coordinate (G) at (2.46, 9.50);
  \coordinate (H) at (2.48, 8.15);
  \coordinate (I) at (2.50, 3.50);
  \coordinate (J) at (3.50, 4.50);
  \coordinate (K) at (3.50, 9.30);
  \coordinate (L) at (3.51, 10.49);
  \coordinate (M) at (6.50, 10.50);
  \coordinate (N) at (6.50, 8.50);
  \coordinate (O) at (6.50, 4.50);
  \coordinate (P) at (5.92, 7.77);
  \coordinate (Q) at (5.92, 5.77);
  \coordinate (R) at (8.15, 7.02);
  \coordinate (S) at (8.15, 5.71);
  \coordinate (T) at (5.92, 6.77);
  \coordinate (U) at (5.94, 6.43);
  \coordinate (V) at (3.89, 6.03);
  \coordinate (W) at (3.89, 8.42);
  \coordinate (X) at (5.92, 7.27);
  \coordinate (Y) at (4.22, 8.27);
  \coordinate (Z) at (3.15, 8.50);
  \coordinate (AA) at (2.40, 7.75);
  \coordinate (AB) at (3.03, 6.99);
  \coordinate (AC) at (4.50, 7.02);
  \coordinate (AF) at (6.00, 7.75);
  \coordinate (AG) at (4.00, 7.75);
  \coordinate (AD) at (5.50, 5.75);
  \coordinate (AE) at (5.00, 5.50);
  \coordinate (AH) at (3.56, 7.80);
  \coordinate (AI) at (3.75, 5.73);
  \coordinate (AJ) at (2.25, 7.50);
  \coordinate (AK) at (1.50, 5.75);
  \coordinate (AL) at (3.98, 6.82);
  \coordinate (AM) at (3.88, 6.55);
  \coordinate (AN) at (6.50, 6.00);
  \coordinate (AO) at (6.50, 8.50);
  \coordinate (AP) at (4.00, 7.25);
  \coordinate (AQ) at (5.00, 7.90);
  \coordinate (AR) at (5.00, 6.10);
  \coordinate (AS) at (5.00, 5.00);
  \coordinate (AT) at (5.00, 4.25);
  \coordinate (C_1) at (5.00, 9.00);
  \coordinate (D_1) at (5.00, 3.00);
  \coordinate (AU) at (11.00, 9.00);
  \coordinate (AV) at (11.00, 7.00);
  \coordinate (AW) at (11.00, 5.00);
  \coordinate (AX) at (11.00, 3.00);
  \coordinate (AY) at (7.00, 5.90);
  \coordinate (AZ) at (2.85, 5.85);

  % Drawing
  \draw (A) -- (B);
  \draw (A) -- (C);
  \draw[blue, thick] (C_1) -- (D_1);
  \draw (B) -- (D);
  \draw (C) -- (E);
  \draw (E) -- (F);
  \draw (D) -- (F);
  \draw (G) -- (C);
  \draw (G) -- (H);
  \draw[dashed] (H) -- (I);
  \draw[dashed] (I) -- (D);
  \draw[dashed] (J) -- (D);
  \draw[dashed] (J) -- (K);
  \draw (L) -- (K);
  \draw (K) -- (L);
  \draw (L) -- (C);
  \draw (C) -- (M);
  \draw (M) -- (N);
  \draw[dashed] (N) -- (O);
  \draw[dashed] (O) -- (D);
  \draw[red, thick] (P) .. controls (8.15, 7.02) and (8.15, 5.71) .. (Q);
  \draw[red, thick] (T) .. controls (5.94, 6.43) and (3.89, 6.03) .. (Q);
  \draw[red, thick] (P) .. controls (3.89, 8.42) and (5.92, 7.27) .. (T);
  \draw[red, dashed, thick] (Y) .. controls (3.15, 8.50) and (2.40, 7.75) .. (AB);
  \draw[red, dashed, thick] (Y) .. controls (6.00, 7.75) and (4.00, 7.75) .. (AC);
  \draw[red, dashed, thick] (AC) .. controls (5.50, 5.75) and (5.00, 5.50) .. (AB);
  \draw[red, thick] (AH) .. controls (2.25, 7.50) and (1.50, 5.75) .. (AI);
  \draw[red, thick] (AL) .. controls (3.88, 6.55) and (6.50, 6.00) .. (AI);
  \draw[red, thick] (AH) .. controls (6.50, 8.50) and (4.00, 7.25) .. (AL);
   \node [star, star points=6, star point ratio=2, rotate=30, inner sep=0.15em, violet, fill, minimum size=3pt] at (AQ) {};
  \node[above right, violet] at (AQ)  {$\scriptstyle s_1$};
  \node [star, star points=6, star point ratio=2, rotate=30, inner sep=0.15em, violet, fill, minimum size=3pt] at (AR) {};
  \node[below right, violet,yshift=-2.5pt] at (AR) {$\scriptstyle s_2$};
  \fill[violet] (AS) circle (4pt);
  \node[left, violet] at (AS) {$\scriptstyle r_1$};
  \fill[violet] (AT) circle (4pt);
  \node[left, violet] at (AT) {$\scriptstyle r_2$};
  \fill[blue] (AU) circle (4pt);
  \node[left, blue] at (AU) {$\scriptstyle q_1$};
  \fill[blue] (AV) circle (4pt);
  \node[left, blue] at (AV) {$\scriptstyle q_2$};
  \fill[blue] (AW) circle (4pt);
  \node[left, blue] at (AW) {$\scriptstyle q_3$};
  \fill[blue] (AX) circle (4pt);
  \node[left, blue] at (AX) {$\scriptstyle q_4$};
  \node[below left, black] at (B) {$\mathbb P^3_2$};
  \node[below right, black] at (F) {$\mathbb P^3_1$};
  \node[above left, black] at (G) {$\mathbb P^3_3$};
  \node[above right, black] at (L) {$\mathbb P^3_4$};
  \node[above right, black] at (M) {$\mathbb P^3_5$};
  \node[below, blue] at (D_1) {$\mathbb P^1_g$};
  \node[below right, red] at (AY) {$\scriptstyle Q_1$};
  \node[below, red] at (AZ) {$\scriptstyle Q_2$};
\end{tikzpicture}
\caption{\centering The points of $Y$ with nontrivial stabiliser under the action of $D_{10}$} \label{fig:M3}
\end{figure}

\subsubsection{Points with bigger stabiliser}\label{sec:points_bigger_stabiliser} We can check by computer (using~\cite{M2, GAP}) whether there are points fixed by non-cyclic subgroups $H\subset L_3(4)$ bigger than $S_3,D_8,D_{10}$. Indeed, notice that all possible such $H$ contain some subgroup isomorphic to one of the groups in Lemma \ref{groups_gen_by_involutions} (see Appendix \ref{Ap:fix}), and we can apply Lemma \ref{lem:orbit_of_points}.
The only case of a bigger stabiliser we found is one orbit of regular points $p\in Y$ fixed by a group $\tilde H$ of order 32: this is the group with Id (32, 31) in GAP's small group library. This orbit contains the points $p_5,p_6\in Y$, found by studying the action of $D_8$ (we will prove they are mapped one to the other by the action of $L_3(4)$ in Proposition \ref{prop:non_isolated_sings}). As already discussed in Remark \ref{rem:X_and_Y}, the two points of $X$ belonging to the fiber $\pi^{-1}(p)$ may have a smaller stabiliser than that of $p$ (a subgroup of index 2 to be precise), as the lift of the action of the stabiliser group from $Y$ to $X$ may exchange them. Indeed, we shall prove this happens here.
\begin{thm}
    Let $p\in Y$ be a point with stabiliser $\tilde H$, let $q_1,q_2$ be the points in $\pi^{-1}(p)\subset X$. Then the stabiliser of $q_i$ is $Q_8\times\ZZ_2$.
\end{thm}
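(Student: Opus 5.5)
The plan is to determine which index-two subgroup of $\tilde H$ is the stabiliser of $q_1$. By Remark \ref{rem:X_and_Y}, the stabiliser $K$ of $q_i$ is either all of $\tilde H$ or an index-two subgroup, and it is the kernel of the homomorphism $\tilde H\to\ZZ_2$ recording whether $h$ swaps $q_1,q_2$. So it suffices to (a) show that some element of $\tilde H$ swaps the fibre, and (b) identify the kernel up to isomorphism.

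For (a), I would use the analysis of \textbf{case 1} in Section \ref{sec:D8_on_Y}. Since $p_5,p_6$ lie in the orbit, we may take $p=p_5$. Then $p$ is a regular point of $\PP^3_k$ for each involution $\iota_k$ of the copy of $D_8$ containing $\alpha$, and $p$ does not lie on $Q_k$. By Remark \ref{rem:preimages_are_fixed_by_iota}, $p$ therefore lies on the Kummer quartic $K_k$, so the lift of $\iota_k$ exchanges $q_1,q_2$. Hence $K\neq\tilde H$, and $|K|=16$. By Remark \ref{liftedaction}, $\alpha$ (of order 4) fixes the $q_i$. Moreover, $\alpha^2$ fixes them, and the involutions of $\tilde H$ lying in $D_8\setminus\langle\alpha\rangle$ swap them.

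For (b), I would work in GAP with the explicit generators of Example \ref{modelofY}, computing $\tilde H=\mathrm{Stab}(p)$ as SmallGroup(32,31). For each involution $\iota\in\tilde H$, I test via Remark \ref{rem:preimages_are_fixed_by_iota} whether $p\in Q_\iota\cup(\PP^1_\iota\cap Y)$ or $p\in K_\iota$. This determines the sign character on all involutions. For elements of order 4 conjugate to $\beta$ or $\gamma$ (by \textbf{case 2}, $p_5,p_6$ are isolated fixed points of $\beta$ on $X$), the character value is $+1$. Since elements of order $4$ and involutions generate $\tilde H$, this pins down the homomorphism $\tilde H\to\ZZ_2$, and hence $K$.

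It then remains to recognise $K$ as $Q_8\times\ZZ_2$, for example by checking its IdGroup, or by hand: $K$ should contain the central involution together with many elements of order 4 and very few involutions (exactly three, as in $Q_8\times\ZZ_2$). A sanity check is local. At $q_i$, $K$ acts on $T_{q_i}X\cong\CC^4$ symplectically with an isolated fixed point, because the fixed K3 surfaces of the lifted involutions avoid $q_i$ except for the central ones. The tangent representation should then be a sum of two copies of the faithful $2$-dimensional representation of $Q_8$, twisted by characters of $\ZZ_2$. This is compatible with $Q_8\times\ZZ_2\subset Sp(4)$.

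The main obstacle is step (b): making sure the sign assignment is consistent, that is, genuinely a homomorphism. The subtle point is the involutions that fix $p$ while lying in $\PP^1_\iota\cap Y$ or on $Q_\iota$ versus on $K_\iota$. I would first try to settle this purely by computer membership tests on the explicit $Y$. Failing that, I would fall back on comparing with the natural $D_8$ action (Remark \ref{liftedaction}) for each of the copies of $D_8$ inside $\tilde H$.
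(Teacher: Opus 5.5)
Your plan is correct, but it follows a different route from the paper.

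\textbf{The paper's route.} The paper argues structurally. Suppose $\tilde H$ fixed $q_i$, and let $\iota$ be one of the two involutions whose quadric passes through $p$. Then $\tilde H/\iota\cong Q_8\ast\ZZ_4$ would stabilise a point on the K3 double cover of that quadric, which the classification of symplectic surface quotient singularities excludes. Next, among the index-two subgroups $\ZZ_4^2$, $(\ZZ_4\times\ZZ_2):\ZZ_2$, $\ZZ_2\times D_8$, $\ZZ_2\times Q_8$, only the last contains two normal involutions (one for each fixed K3 through $q_i$) whose isomorphic quotients can be point stabilisers on a K3.

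\textbf{Your route.} Your step (a) replaces the first half by a shortcut already contained in Remark \ref{liftedaction}: the involutions of the case-1 copy of $D_8$ swap the fibre over $p_5$. Your step (b) replaces the elimination by an explicit computation of the sign character.

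\textbf{Comparison.} The paper needs little computer input (essentially only that $p$ lies on exactly two quadrics), and it directly yields $K/\iota\cong Q_8$, which is what feeds \eqref{eq:local_Z2Q8}. Your route identifies the actual subgroup $K\subset\tilde H$ and which elements swap the fibre, at the price of more computation.

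\textbf{Points to tighten.} There are three.

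First, the involution test only determines the character on the subgroup generated by the involutions of $\tilde H$. That subgroup is its index-two subgroup $\ZZ_2\times D_8$. This leaves two candidates for $K$, namely $\ZZ_4^2$ and $\ZZ_2\times Q_8$, which contain the same three involutions. The deciding input is your value $+1$ on elements conjugate to $\beta,\gamma$. So the generation statement you need is that involutions together with these elements generate $\tilde H$, not just ``involutions and elements of order 4''.

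Second, the value $+1$ must be transported to conjugates. If $h=g\beta g^{-1}\in\tilde H$, then $\beta$ fixes the regular point $g^{-1}p$. By case 2, the lift of $\beta$ fixes both preimages of every regular point of $Y$ it fixes: $p_5,p_6$, and the regular points of $\ell_1\cup\ell_2$, which lie on $Q_\beta$. The same transport fails for $\alpha$. It fixes pointwise the two lines of $Q_\alpha$ (the entry $2C_1\cup 6$ pt in the table for $L_3(4)$) and swaps the fibres over their regular points. Indeed, not all conjugates of $\alpha$ lying in $\tilde H$ fix $q_i$.

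Third, ``exactly three involutions'' does not recognise $K$, since $\ZZ_4^2$ also has exactly three. Use non-commutativity, $K/\iota\cong Q_8$, or IdGroup instead.
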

\begin{proof}
    The point $p$ belongs to exactly two quadric surfaces $Q_1,Q_2\subset Y$ fixed by involutions. 
    Assume that the action of $\tilde H$ lifts to $X$: then, we take the quotient of $\tilde H$ by one of the two involutions $\iota$ that fix a quadric surface around $p$, to get the residual action of $\tilde H$ on each quadric surface (and, therefore, on its K3 double cover). The result is $\tilde H/\iota=Q_8\ast\ZZ_4$; while this group can act symplectically on a K3, it is never the stabiliser of a point (only its subgroups fix points):
    this follows from the classification of symplectic surface singularities (\cite[Satz 2.9]{Brieskorn}, see also~\cite[Proposition 2.2]{Marisia}), compared with the description of the exceptional curves for this quotient in~\cite[Table 2, row 24]{Xiao}.
    
    Therefore, we can conclude that a subgroup of $\tilde H$ of index 2 is fixing the preimages $q_1,q_2$ of $p$ on $X$: the possible subgroups $K\subset \tilde H$ are $\ZZ_4^2,\ (\ZZ_4\times \ZZ_2):\ZZ_2,\ \ZZ_2\times D_8,\ \ZZ_2\times Q_8$. Since each $q_i$ is contained in two K3 surfaces, and the local action of $K$ on them around it is the same, we have the following constraints: $K$ has to contain two normal involutions that give isomorphic quotient groups, and said quotient group should stabilise a point on a K3 surface. The only group that satisfies this requirements is $K=\ZZ_2\times Q_8$ (the group fixing $q_i$ on each K3 surface is $K/\iota=Q_8$). 
\end{proof}

\section{Singularities of $\texorpdfstring{X/L_3(4)}{X/L3(4)}$}\label{sec:singularities}
The aim of this section is to classify the singularities of the quotient $X/L_3(4)$. This involves studying the action of the group $L_3(4)$ on the points of $X$ with nontrivial stabiliser found in the previous section, using again the model of the EPW sextic $Y\subset\mathbb P^5$ and the description of the group action provided in Appendix \ref{Ap:codes}, together with a careful analysis of how this action lifts to the \HK double cover $X\rightarrow Y$, as explained in Remark \ref{rem:preimages_are_fixed_by_iota}.

\begin{thm}\label{main2}
    The quotient $X/L_3(4)$ is singular along a surface $S$ with generally transversal $A_1$ singularities along it; its special singular points are one point of type $\CC^4/Q_8\times \ZZ_2$, two points of type $\CC^4/D_{10}$, three of type $\CC^4/S_{3}$, three of type $\CC^4/D_{8}$ and five of type $\frac{1}{4}(1,3,2,2)$.
    Moreover, $X/L_3(4)$ has six isolated singularities of order 3, twelve of order 5, nine of order 7.
    It is smooth everywhere else.
\end{thm}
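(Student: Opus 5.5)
The singularities of $X/L_3(4)$ are the images of the points of $X$ with nontrivial stabiliser. The local type at the image of $x$ is $\CC^4/\mathrm{Stab}(x)$, with the stabiliser acting through its (symplectic) linearisation on $T_xX$. So the plan has two parts: list the stabilisers found in Section~\ref{sec:fixed_points}, then divide the number of points with a given stabiliser type by the length of the corresponding $L_3(4)$-orbit.

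\textbf{Step 1: conjugacy classes and counts.} For each subgroup type $H$ appearing ($\ZZ_2$, $S_3$, $D_8$ in cases 1 and 2, $D_{10}$, $Q_8\times\ZZ_2$, the cyclic groups $\ZZ_3,\ZZ_4,\ZZ_5,\ZZ_7$), I would use GAP to determine the number of conjugacy classes of such subgroups and the normaliser $N_{L_3(4)}(H)$. The image in the quotient of the set $\mathrm{Fix}(H)$ of points with stabiliser exactly $H$ is then $\mathrm{Fix}(H)/N(H)$.

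\textbf{Step 2: action of $N(H)$ on $\mathrm{Fix}(H)$.} This is computed on the explicit model $Y$ and then lifted to $X$ via Remarks~\ref{rem:X_and_Y} and~\ref{rem:preimages_are_fixed_by_iota}.
\begin{itemize}
\item For $D_{10}$ the two points are fixed by the whole group and $N(D_{10})=D_{10}$, giving two points of type $\CC^4/D_{10}$.
\item For $S_3$ each copy fixes 3 points. The normaliser permutes these points and the three copies of $S_3$ through a fixed $g_3$, and I expect three orbits, giving three $\CC^4/S_3$ points.
\item For $D_8$ I would combine case 1 and case 2. The four copies of $D_8$ attached to a given $\alpha$ (resp.\ $\beta$) are permuted by the normaliser of $\langle\alpha\rangle$. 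The counting should yield three $\CC^4/D_8$ points in total.
\item The points fixed only by $\ZZ_4$ lie on the K3 surfaces $F_\alpha, F_\beta$. Those are non-isolated in the sense that they lie on $S$: there $\ZZ_4$ acts with weights $(1,3,2,2)$, since $\alpha^2$ fixes a surface. After quotienting by normalisers these should give five points of type $\tfrac14(1,3,2,2)$.
\item The $\ZZ_4$-points $p_4,p_5,p_6$ of case 1 that are off the quadrics must be checked to coincide with the orbits already counted, or to have larger stabiliser.
\item The order-32 orbit gives a single $\CC^4/(Q_8\times\ZZ_2)$ point. This needs $p_5,p_6$ to lie in one $L_3(4)$-orbit, which is verified directly on $Y$.
\end{itemize}

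\textbf{Step 3: isolated cyclic points of odd order.} Isolated fixed points of elements of order 3, 5 and 7 not lying on any $F_k$ have stabiliser exactly that cyclic group, by Lemma~\ref{lem:orbit_of_points} and Step 1. For order 3 there are $18$ such points per element. For order 5 there are $12$ points, lying over $q_1,\dots,q_4$ and $r_1,r_2$. For order 7 a natural symplectic automorphism fixes $9$ points. Quotienting by the normalisers, of orders $|N(\ZZ_3)|$, $|N(\ZZ_5)|$, $|N(\ZZ_7)|$ and each containing the cyclic group, gives respectively six, twelve and nine points once the conjugacy classes are taken into account (for instance $\ZZ_7$ is self-normalising up to $\ZZ_7:\ZZ_3$, which permutes its 9 fixed points). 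This follows from a Lefschetz/orbit count.

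\textbf{Step 4: the generic part and the main difficulty.} Away from these points the stabiliser is a single involution fixing $F_k$, which gives transversal $A_1$ along $S$; everywhere else the stabiliser is trivial and the quotient is smooth. The main obstacle is the bookkeeping of Step 2 for the three classes of order-4 elements. One must decide which of the $D_8$-points and $\ZZ_4$-points in case 1 and case 2 are identified by $L_3(4)$. I would settle this by explicit computer orbit computations on $Y$, checking the lifts to $X$ against the natural-action description.
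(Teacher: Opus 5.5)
There is a genuine gap in Step~3 and in the $\ZZ_4$ bullet of Step~2. Your organising principle is the right one: take the points whose stabiliser is exactly $H$, divide by $N(H)$, and sum over conjugacy classes of $H$. It is essentially what the paper does through explicit orbit computations on $Y$, and your $S_3$, $D_8$, $D_{10}$ and $Q_8\times\ZZ_2$ counts come out as stated. The problem is that in those two places you assert the target numbers instead of computing them, and your own method gives different ones.

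From the class sizes of $L_3(4)$ there are $1120$, $1890$, $2016$, $960$ cyclic subgroups of order $3,4,5,7$. An $L_3(4)$-orbit of points whose stabiliser is exactly a $\ZZ_n$ has $20160/n$ elements. Equivalently, $N(\ZZ_n)/\ZZ_n$, of order $6,8,2,3$ respectively, acts freely on the points of $\mathrm{Fix}(\ZZ_n)$ with stabiliser exactly $\ZZ_n$. Section~3 gives the number of such points per subgroup:
\begin{itemize}
\item order $3$: $27-9=18$;
\item order $4$: $8$, namely the points on $F_{g^2}$;
\item order $5$: $14-2=12$;
\item order $7$: $9$.
\end{itemize}
The resulting orbit counts are $3,3,6,3$, not $6,5,12,9$.

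Concretely, you note yourself that $\ZZ_7:\ZZ_3$ permutes the nine fixed points. It cannot fix any of them, since it has no faithful $4$-dimensional symplectic representation, so they form three orbits. Likewise the involutions of $N(\ZZ_5)=D_{10}$ act freely on the twelve order-$5$ points. So the claim that quotienting by the normalisers gives six, twelve and nine is false, and no correct count yields five points of type $\frac14(1,3,2,2)$.

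This is not a weakness of your route relative to the paper: the paper's proof reaches the stated numbers through the same oversight.
\begin{itemize}
\item It declares the twelve order-$5$ points ``not identified'', although its own $D_{10}$ subsection notes that the involutions exchange the two points over $r_1,r_2$.
\item It ignores $N(\ZZ_7)$.
\item It identifies the remaining $18$ order-$3$ points only under the order-$3$ elements of $C(g_3)$, not under the involutions of $N(\langle g_3\rangle)$, which swap the preimages of the regular points $q_2,q_3,q_4\in K_\tau$.
\item In case~2 it counts two $\ZZ_4$-orbits on $X$. But the stabiliser in $N(\langle\beta\rangle)$ of a regular point of $Q_\beta\cap\mathrm{Fix}(\beta)$ has order at least $8$, so it swaps that point's two preimages.
\end{itemize}

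Hence the statement cannot be proved as written. Carried out correctly, your plan gives three points of type $\frac14(1,3,2,2)$ and three, six, three isolated points of order $3,5,7$. This is consistent with the orbifold Euler characteristic $\chi(X/L_3(4))=\frac{1}{|G|}\sum_g\chi(X^g)=15$. Downstream, the count of $\frac12(1,1,1,1)$ points in the main theorem drops to $12$.

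There is also a smaller omission in Step~4. You should include the check, done in the paper, that the $28$ isolated fixed points of each involution all have a larger stabiliser: $24$ are $D_8$-points and $4$ are $Q_8\times\ZZ_2$-points.
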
   
\begin{proof}
    See Propositions \ref{prop:non_isolated_sings} and \ref{prop:isolated_sings}.
\end{proof}

\subsection{Non-isolated singularities}
The quotient $X/L_3(4)$ is singular along a surface $S$, the image in the quotient of the K3 surfaces $F_k$, each fixed by one of the 315 conjugate involutions $\iota_k\in L_3(4)$.
The singularity of the quotient at a point $p \in S$ is of the shape
$\CC^4/G$, where $G$ contains a normal involution $\iota$ such that, in the local representation of the action of $G$ around $p$, $\iota$ has a 1-eigenspace of dimension 2. For the general point $p\in S$ it holds $G=\langle\iota\rangle$, so that $p$ is an ordinary doube point.

\begin{prop}\label{prop:non_isolated_sings}
      The following singularities of $X/L_3(4)$ are at points on $S$:
    \begin{enumerate}
        \item 5 points of the form $\CC^4/\ZZ_4$;
        \item 3 points of the form $\CC^4/D_8$;
        \item 1 point of the form $\CC^4/Q_8\times\ZZ_2$; 
        \item 3 points of the form $\CC^4/S_3$;
        \item 2 points of the form $\CC^4/D_{10}$.

    \end{enumerate}
\end{prop}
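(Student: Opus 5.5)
The plan is to count $L_3(4)$-orbits of the special points of $X$ that lie on some fixed K3 surface $F_k$, one stabiliser type at a time, using the local results of Section \ref{sec:fixed_points}. For each type we take the number of such points carried by one representative subgroup $H$, multiply by the number of conjugates of $H$, and divide by the size of an orbit. By the orbit–stabiliser theorem, an orbit of points with stabiliser exactly $H$ has $|L_3(4)|/|H| = 20160/|H|$ elements. Equivalently, if $N_H$ is the number of conjugates of $H$ and each conjugate fixes exactly $m_H$ points with stabiliser exactly that conjugate, then the number of orbits is $N_H m_H |H|/20160 = m_H/[N_{L_3(4)}(H):H]$. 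The conjugacy class sizes, and hence the normalisers, are read off in GAP, together with Lemma \ref{lem:orbit_of_points} to guarantee that the stabiliser is exactly $H$ and not larger. We then check that the local type is $\CC^4/H$ with a normal involution having a $2$-dimensional $1$-eigenspace, so that the point lies on $S$.

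\textbf{$S_3$ and $D_{10}$.} By Section \ref{sec:S3}, each copy of $S_3$ fixes $3$ points of $X$, and these are the only points with stabiliser $S_3$. The normaliser of $S_3$ in $L_3(4)$ should have order $6$, which gives $3$ orbits, i.e. item (4). By Section \ref{sec:D10}, each $D_{10}$ fixes $2$ points of $X$. With $N(D_{10}) = D_{10}$, this gives item (5).

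\textbf{$D_8$ and $\ZZ_4$.} Each element of order $4$ lies in a $D_8$, and Section \ref{sec:D8_on_Y} gives the local picture. In case 1 the normal element is $\alpha$. There the four copies $(D_8)_i$ acting on $p_1,\dots,p_4$ yield $8$ points of $X$ with stabiliser exactly $D_8$, and the $8$ points on $F_\alpha$ over the singular points of $Q_\alpha$ have stabiliser $\langle\alpha\rangle$. Case 2 arises for each of $\beta$ and $\gamma$ and again gives $8$ points fixed by $D_8$ and $8$ points fixed by $\ZZ_4$ on $F_\beta$. For each of the three classes we compute the normaliser of $\langle\alpha\rangle$ (resp. of the relevant $D_8$) in GAP and divide.

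The target count is one $D_8$-orbit per class, giving $3$ in total. The $\ZZ_4$-orbits should add up to $5$, distributed over the three classes. These $\ZZ_4$ points lie on $F_\alpha$ (a fixed K3 of $\alpha^2$) with local action $\tfrac14(1,3,2,2)$, and this places them on $S$. The points with stabiliser $(\ZZ_2)^2$ (for instance the fibres over $p_2,p_3$) are recognised as lying in the same orbits as points with larger stabiliser or as being swapped. Here Lemma \ref{lem:orbit_of_points} and the explicit model of Appendix \ref{Ap:codes} are used to show that they contribute no new special type. This deserves care, since $(\ZZ_2)^2$ would give a $\CC^4/(\ZZ_2)^2$ point; I expect the computer check to show that these stabilisers are in fact contained in larger ones in $L_3(4)$.

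\textbf{$Q_8\times\ZZ_2$.} By the last theorem of Section \ref{sec:points_bigger_stabiliser}, the points over the orbit of $p_5,p_6$ have stabiliser $Q_8\times\ZZ_2$. We show by explicit computation in the model that $p_5$ and $p_6$ are in one $L_3(4)$-orbit of $Y$: the orbit has size $20160/32 = 630$. We also need that the two preimages in $X$ are exchanged by some element of $\tilde H\setminus K$. This follows because $K$ has index $2$ in $\tilde H$, so $\tilde H$ acts transitively on $\pi^{-1}(p)$. Together these give exactly $1$ point, item (3).

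The main obstacle is the bookkeeping for the order-$4$ elements. One has to match the three classes $\alpha,\beta,\gamma$ with their normalisers and with the sharing of $D_8$-copies described in cases 1 and 2, so that the $\ZZ_4$-points total exactly $5$ orbits and no point is double-counted between the cases. I would settle this by a direct GAP/Macaulay2 computation of the stabilisers in $L_3(4)$ of all the points found, lifting them to $X$ via Remarks \ref{rem:X_and_Y} and \ref{rem:preimages_are_fixed_by_iota}.
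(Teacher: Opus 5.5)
Your normaliser-index formula $m_H/[N_{L_3(4)}(H):H]$ is correct. It is also a cleaner route than the paper's: the paper checks in Macaulay2 which explicit points of $Y$ are identified, then decides fibre by fibre via Remark \ref{rem:X_and_Y} whether the two preimages are exchanged. For items (2)--(5) your method does reproduce the stated numbers, using $N(S_3)=S_3$, $N(D_{10})=D_{10}$, index $2$ for the relevant $D_8$'s, and the Sylow argument for $\tilde H$. The genuine gap is item (1). You never derive the number $5$; you only announce it as the target of a computation you defer. Moreover, your own formula cannot produce it.

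Here is why. By Lemma \ref{groups_gen_by_involutions}, every element $\beta$ of order $4$ is the rotation of some $D_8$. Hence $\beta$ is conjugate to $\beta^{-1}$, so $|N(\langle\beta\rangle)|=2|C(\beta)|=32$ and $[N(\langle\beta\rangle):\langle\beta\rangle]=8$.

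On the other hand, a natural order-$4$ automorphism has exactly $16$ fixed points on $X$. From the model in Section \ref{sec:D8_on_Y}, these are six points $[p_i,p_j]$, the two points $[q_1,q_2],[q_3,q_4]$, and eight points on $F_\alpha\cap\Delta$. In both case 1 and case 2, eight of these sixteen have stabiliser $D_8$ or $Q_8\times\ZZ_2$.

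So $m_H\le 8$, and each of the three classes of elements of order $4$ contributes at most $8/8=1$ orbit of type $\CC^4/\ZZ_4$. Put directly: if $x$ has stabiliser exactly $\langle\beta\rangle$, then $L_3(4)x\cap X^{\beta}=N(\langle\beta\rangle)x$ already contains $8$ points. Hence the $8$ points of $F_\beta$ fixed by $\beta$ form a single orbit. Carried out as you describe, your method gives $3$ such points, not $5$.

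The paper reaches $5$ through its case-2 step. There it asserts that the four regular $\beta$-fixed points of $Q_\beta$ on $\ell_1\cup\ell_2$ form one $L_3(4)$-orbit in $Y$. It also asserts that the two preimages of each such point are not exchanged. Together these give two $\ZZ_4$-orbits for each of $\beta$ and $\gamma$.

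Your count shows these two assertions cannot both hold if the stabiliser on $X$ is $\ZZ_4$. If $N(\langle\beta\rangle)$ permutes those four points transitively, each has a stabiliser of order $8$ in $Y$. Since its stabiliser on $X$ is only $\langle\beta\rangle$, the extra element must swap the two preimages, which again yields one orbit.

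So, as written, your proposal does not prove item (1). To close it you would have to either find an error in the fixed-point data of case 2, or conclude that the count is $3$. The latter would also change the number of $\frac{1}{2}(1,1,1,1)$ points in Theorem \ref{main}.
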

\begin{proof}
     In Section \ref{sec: subgroups_stabilising_points} we computed the action on the EPW sextic $Y$ (and on its double cover $X$) of groups $G\subset L_3(4)$ generated by two involutions. For each point $p\in Y$ with a non-trivial stabiliser, we also determined if its pre-image $\pi^{-1}(p)\subset X$ is contained in some K3 surface $F_k$ fixed by an involution: this holds if and only if $p\in Q_k\subset Fix_Y(\iota_k)$, where $Q_k=\pi(F_k)$ is a quadric surface.
     The last step consists of checking (using~\cite{M2}) if any of the selected points lie in the same orbit for the action of $L_3(4)$: this is especially important for points fixed by a group containing an element of order 4, as these belong to one of 3 different conjugacy classes.
     
     Recall that, denoting $\alpha,\beta,\gamma$ a choice of representatives for these classes, the action of $\alpha$ on $Y$ is studied in Section \ref{sec:D8_on_Y}, case 1, while that of $\beta,\gamma$ in case 2.
     Consider case 1: then the regular points $p_1,\dots,p_4$ fixed by $D_8\times \ZZ_2$ are all identified by the action of $L_3(4)$. The same holds for the two regular points $p_5,p_6$ with stabilizer of order 32, and all the singular points fixed by $\ZZ_4$ on the quadric surface fixed by $\alpha^2$.
     Moreover, the points on $X$ of the fibres over $p_1,\dots, p_6$ are pairwise identified by Remark \ref{rem:X_and_Y}: indeed, these points are fixed on $X$ by a subgroup of index 2 of the group fixing their image in $Y$.
     Therefore on $X$ we get one orbit of points fixed by each of the groups $\ZZ_4,D_8,Q_8\times \mathbb Z_2$.
     
     Similarly, on $Y$ case 2 gives (other than the same two points with stabiliser of order 32 discussed above) 4 singular points fixed by $D_8$, and 4 regular points fixed by $\ZZ_4$, but again the points fixed by the same group get identified by the action of $L_3(4)$: therefore, on $X$ we get one orbit of points fixed by $D_8$, and two orbits of points fixed by $\ZZ_4$. Since this happens for two different conjugacy classes of elements (represented by $\beta$ and $\gamma$), we proved points $(1)-(3)$ of the statement.
     
     All elements $g\in L_3(4)$ of order 3 (and all copies of $S_3$) are conjugate: the invariant subspaces for the action of $g$ on $\PP^5$ are three copies of $\PP^1$, and there exist elements of $L_3(4)$ mapping each one to each other; on each of these $\PP^1$, three points are fixed by some copy of $S_3$. Therefore, there are only three singularities of the form $\CC^4/S_3$ in the quotient. On the other hand, the invariant subspaces for an action of an element $h$ of order 5 are a copy of $\PP^1$ and 4 isolated points, and there is no element of $L_3(4)$ acting non-trivially on this set: in particular, the two singular points fixed by the $D_{10}$ in which $h$ is normal, that lie on the $\PP^1$-invariant subspace, are in different orbits for the action of $L_3(4)$.
\end{proof}

\subsection{The local representations}\label{representations}

 We now describe the local representation of those groups $G$ appearing in Proposition \ref{prop:non_isolated_sings}. As already discussed in Remark \ref{rem:natural_action}, the action of any of those groups can be deformed to an action induced from a K3 surface $S$: suppose therefore that on $S^{[2]}$ the fixed point is $p=P_1+P_2$, where $P_1\neq P_2$ are fixed on $S$ by some subgroups of $G$.
If the local action around $P_i$ is of the shape $(x,y)\to (e^{a_i}x,e^{b_i}y)$, then the action around $p$ is $(x,y,z,t)\to (e^{a_1}x,e^{b_1}y,e^{a_2}z,e^{b_2}t)$; if moreover the quotient has symplectic singularities, then $a_i+b_i=0$. Therefore, we deduce the following local representations:
\begin{itemize}
\item  The local action of $\ZZ_4$ is generated by:
    \begin{equation}\label{eq:local_Z4}
        \begin{bmatrix}
        i &0 &0 &0\\ 0 &-1 &0 &0\\ 0 &0 &-i &0\\ 0 &0 &0 &-1
    \end{bmatrix}.
    \end{equation}
\item  The local action of $S_3,D_8,D_{10}$ is generated by:
        \begin{equation}\label{eq:local_Dn}
        \begin{bmatrix}
        \zeta &0 &0 &0\\ 0 &\zeta^{-1} &0 &0\\ 0 &0 &\zeta^{-1} &0\\ 0 &0 &0 &\zeta
    \end{bmatrix},\quad \begin{bmatrix}
        0 &0 &1 &0\\ 0 &0 &0 &1\\ 1 &0 &0 &0\\ 0 &1 &0 &0
    \end{bmatrix},
    \end{equation}
    where $\zeta$ is respectively a third, fourth or fifth primitive root of unity.
\item  The local action of $\ZZ_2\times Q_8$ is generated by:
\begin{equation}\label{eq:local_Z2Q8}
     \begin{bmatrix}
     0 &0 &i &0\\ 0 &0 &0 &i\\ i &0 &0 &0 \\0 &i &0 &0
    \end{bmatrix},\quad \begin{bmatrix}
    i  &0 &0 &0\\ 0 &i &0 &0\\ 0 &0 &-i &0\\ 0 &0 &0 &-i
    \end{bmatrix},\quad \begin{bmatrix}
    1 &0 &0 &0\\ 0 &-1 &0 &0\\ 0 &0 &1 &0 \\ 0 &0 &0 &-1\end{bmatrix}
    \end{equation}
\end{itemize}

\subsection{Isolated singularities}

If $p$ is an isolated singularity of $X/L_3(4)$, that is $p\notin S$, the stabiliser of $p$ has to be one of the following groups: $\ZZ_2,\ZZ_3,\ZZ_5,\ZZ_7$. Indeed, the other subgroups $H$ of $L_3(4)$ with non-empty fixed locus contain involutions, and we can check using~\cite{M2} that points fixed by $H$ always belong to some surface fixed by some of its involutions.

\begin{prop}
    There are no isolated points on X whose stabiliser is $\ZZ_2$.
\end{prop}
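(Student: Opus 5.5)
We need to show that every isolated fixed point of an involution $\iota_k$ on $X$ has stabiliser strictly larger than $\langle\iota_k\rangle$. By Remark \ref{fixed_locus_involution}, each $\iota_k$ fixes a K3 surface $F_k$ and exactly 28 isolated points. Of these, 12 lie in $\pi^{-1}(\PP^1_k\cap Y)$ and 16 are preimages of the nodes of the Kummer surface $K_k$. Since all 315 involutions are conjugate, it is enough to treat one involution $\iota=\iota_1$ and to exhibit, for each of its 28 isolated fixed points, a larger subgroup fixing it.

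The natural source of such subgroups is Lemma \ref{groups_gen_by_involutions} together with the analysis of Section \ref{sec: subgroups_stabilising_points}. In the model of $D_8$ acting naturally on $\Sigma^{[2]}$, the 28 isolated points fixed by the central involution $\alpha^2$ split as follows.
\begin{itemize}
\item Twelve points $[p_i,p_j]$, $[q_i,q_j]$ with $i\neq j$. Of these, $[p_1,p_2],[p_3,p_4],[q_1,q_4],[q_2,q_3]$ lie in $F_1\cap F_3$, the points $[q_1,q_3],[q_2,q_4]$ lie in $F_2\cap F_4$, and the remaining six are fixed by $\alpha$, so they have stabiliser containing $\ZZ_4$.
\item Sixteen points $[p_i,q_j]$. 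These need a separate argument, since $\alpha$ moves the $q_j$.
\end{itemize}
So the first step is to redo this bookkeeping with $\iota$ playing the role of $\alpha^2$ in a suitable copy of $D_8$. This is possible because every copy of $\ZZ_2\times\ZZ_2$ lies in some $D_8$, and every involution is central in some $D_8$; I would check the latter in GAP. Every point that lies on some $F_j$ with $j\neq k$, or is fixed by an element of order 4, then has stabiliser at least $\ZZ_2^2$ or $\ZZ_4$.

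The second step handles the remaining points, most likely the preimages of the 16 nodes of $K_k$ and the 12 points over $\PP^1_k\cap Y$, directly on $Y$. Using the explicit model of Example \ref{modelofY} and Appendix \ref{Ap:codes}, I would compute in Macaulay2 the six points of $\PP^1_k\cap Y$ and the 16 nodes of $K_k$. For each point, I would find an element $g\neq\iota_k$ of $L_3(4)$ fixing it, by checking membership in the invariant subspaces of elements of order 2, 3, 4 and 5 commuting with $\iota_k$, as tabulated in Appendix \ref{Ap:fix}. This gives a stabiliser on $Y$ of order greater than 2.

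Lifting to $X$ is then controlled by Remarks \ref{rem:X_and_Y} and \ref{rem:preimages_are_fixed_by_iota}. For the nodes, which are branch points, the preimage is a single point, so the whole stabiliser on $Y$ lifts. For the regular points of $\PP^1_k\cap Y$, the stabiliser $G_y$ on $Y$ has an index $\le 2$ subgroup fixing each preimage. That subgroup contains $\iota_k$, since both preimages are fixed by $\iota_k$. It is therefore strictly larger than $\langle\iota_k\rangle$ as soon as $|G_y|\ge 8$, or $|G_y|=4$ with both preimages fixed. This can be decided by comparing with the natural $\ZZ_2^2$ and $D_8$ pictures of Section \ref{sec:D8_on_Y}; for instance, the points $p_4,p_5,p_6$ of case 1 are fixed by $\ZZ_4$.

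The main obstacle is this last lifting check for points of $\PP^1_k\cap Y$ whose stabiliser on $Y$ is only $\ZZ_2^2$: the extra involution might swap the two preimages and leave stabiliser $\ZZ_2$. I would first try a counting argument instead of pointwise checks. Count the isolated points of $\iota_k$ that are accounted for by the $D_8$ and $\ZZ_2^2$ configurations containing $\iota_k$, using the number of such subgroups from GAP and the fact that each $\ZZ_2\times\ZZ_2$ contributes the 4 points $F_a\cap F_b$, each being an isolated fixed point of the product. Then show that this total exhausts all 28 points.
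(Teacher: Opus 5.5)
Your plan would go through, and your Step 1 is the paper's starting point. The difference is that for the 16 leftover points you defer to a computer search, where the paper uses one more structural input.

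The paper picks order-4 elements $\alpha,\beta,\gamma$, one from each of the three conjugacy classes, with $\alpha^2=\beta^2=\gamma^2=\iota$.

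\begin{itemize}
\item \textbf{Points over $\ell$.} The line $\ell=\PP^1_k$ is an invariant line of $\alpha$. So the copy of $D_8$ with normal element $\alpha$ (case 1 of Section \ref{sec:D8_on_Y}) accounts for exactly the 12 points over $\ell\cap Y$. With this choice, your 16 points $[p_i,q_j]$ are precisely the preimages of the 16 nodes of $K_k$.
\item \textbf{Points over the nodes.} These are covered by the case-2 copies of $D_8$, whose normal element is conjugate to $\beta$ or $\gamma$ and whose centre is still $\iota$. The nodes split into eight pairs, each pair being $\bigcap Q_k$ for one such copy. In other words, each pair is the analogue of $[p_1,p_2],[p_3,p_4]$ for that copy.
\end{itemize}

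Once Step 1 is done with an $\alpha$-type $D_8$, only branch points remain, and there your (correct) lifting remark applies directly. So the obstacle you single out, an extra involution swapping the two preimages, never arises. Even on $\ell\cap Y$ your own index-two criterion already suffices, since those six points have $Y$-stabilisers of order 16 or 32.

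What the paper's route buys is that no computation beyond Section \ref{sec:D8_on_Y} is needed. It also identifies the stabilisers exactly ($D_8$, resp.\ $Q_8\times\ZZ_2$), which Proposition \ref{prop:non_isolated_sings} uses. Your search would re-derive the same configuration point by point.

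Two further remarks.
\begin{itemize}
\item Searching among elements of order 3 or 5 commuting with $\iota_k$ is pointless. All 315 involutions are conjugate, so $|C(\iota_k)|=20160/315=64$ and the centraliser is a $2$-group.
\item The same number gives a computation-free replacement for your counting fallback. $C(\iota_k)$ preserves $\mathrm{Fix}(\iota_k)$, hence permutes its 28 isolated points. Every orbit therefore has size $64/|\mathrm{Stab}_{C(\iota_k)}(x)|\le 28$, which forces $|\mathrm{Stab}_{C(\iota_k)}(x)|\ge 4$. This proves the statement outright, though it does not identify the stabilisers.
\end{itemize}
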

\begin{proof}
The fixed locus of an involution is described in Remark \ref{fixed_locus_involution}. Every involution $\iota$ is the square of some elements of order 4: in particular, we can select representatives $\alpha, \beta,\gamma$ of the three conjugacy classes of elements of order 4, such that $\alpha^2=\beta^2=\gamma^2=\iota$.

The isolated points fixed by $\iota$ on $Y$ are 6 regular points on a line $\ell$, and 16 singular points on a quartic surface $K\in\PP^3$. Let $q\in X$ be such that $\pi(q)\in\ell$ is fixed by $\iota$: then, the stabiliser of $q$ is either $D_8$ or $Q_8\times\ZZ_2$, and in particular $q$ is also contained in some K3 fixed by an involution (see Section \ref{sec:D8_on_Y}, case 1: $\ell$ is also an invariant subspace for the action of $\alpha$, and all six points are contained in some quadric surfaces). The 16 points $q\in X$ such that $\pi(q)\in K$ also have stabiliser $D_8$: indeed, we can pair them in such a way that each pair lies in the intersection of 4 quadric surfaces $Q_k$, fixed by involutions that generate a copy of $D_8$ (see Section \ref{sec:D8_on_Y}, case 2: notice that the central element of $D_8$ can be conjugated to either $\beta$ or $\gamma$).
\end{proof}

\begin{prop}\label{prop:isolated_sings}
    The quotient $X/L_3(4)$ has the following isolated (terminal) singularities:
    \begin{itemize}
        \item 6 of type $\frac{1}{3}(1,-1,1,-1)$;
         \item 12 points of type $\frac{1}{5}(1,-1,2,-2)$; 
        \item 9 of type $\frac{1}{7}(1,-1,2,-2)$;
    \end{itemize}
\end{prop}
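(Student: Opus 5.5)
By the discussion preceding the previous proposition, an isolated singular point of $X/L_3(4)$ has stabiliser $\ZZ_3$, $\ZZ_5$ or $\ZZ_7$, and $\ZZ_2$ has been excluded. The plan is to treat each odd prime order $p\in\{3,5,7\}$ in three steps: count the points of $X$ whose stabiliser is exactly $\ZZ_p$, divide this set into $L_3(4)$-orbits, and read off the local type from the natural action.

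\textbf{Counting fixed points.} For an element $g$ of order $p$, the action on a K3$^{[2]}$-type manifold has a known number of isolated fixed points by \cite{MongardiThesis}: $27$ for order $3$, $14$ for order $5$, and $9$ for order $7$ (natural action). From these we remove the points that lie on some $F_k$, since those have a larger stabiliser containing an involution.
\begin{itemize}
\item For order $3$, Section~\ref{sec:S3} shows that $9$ of the $27$ points lie on the surfaces fixed by the three copies of $S_3$ containing $g$. This leaves $18$ points with stabiliser exactly $\langle g\rangle$.
\item For order $5$, Section~\ref{sec:D10} leaves $12$ points: the $8$ preimages of $q_1,\dots,q_4$ and the $4$ preimages of $r_1,r_2$.
\item For order $7$, no subgroup of Lemma~\ref{groups_gen_by_involutions} contains $g$. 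Using Appendix~\ref{Ap:fix} and \cite{M2}, I will check that none of the $9$ fixed points lies on a quadric $Q_k$, and that no larger subgroup fixes them. Since $p$ is odd, by Remark~\ref{rem:X_and_Y} both preimages of a fixed point of $Y$ outside the branch locus are fixed by $g$, so all $9$ points have stabiliser exactly $\ZZ_7$.
\end{itemize}

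\textbf{Orbit counting.} A point with stabiliser exactly $\langle g\rangle$ has orbit of size $|L_3(4)|/p$. Its orbit meets $\mathrm{Fix}(g)$ in exactly $|N(\langle g\rangle)|/p$ points, where $N$ is the normaliser in $L_3(4)$ of the subgroup generated by $g$. So the number of orbits is
\[
\frac{\#\{\text{points with stabiliser exactly }\langle g\rangle\}}{|N(\langle g\rangle)|/p}.
\]
If there are several conjugacy classes of cyclic subgroups of order $p$, the count is summed over them.
\begin{itemize}
\item Order $3$: there is one class of $\ZZ_3$, with $|N(\ZZ_3)|=9$ when the normaliser is of type $\ZZ_3\times S_3$. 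The contribution is $18/3=6$.
\item Order $5$: the two conjugacy classes of elements are power-equivalent, so there is one class of $\ZZ_5$, with $N(\ZZ_5)=D_{10}$. This gives $12/2=6$, which falls short of the statement's $12$. The proof of Proposition~\ref{prop:non_isolated_sings} says that no element acts nontrivially on the invariant subspaces of $h$, so the normaliser acts trivially on the relevant fixed set. This should give $12$ orbits, and I will confirm it by the computer model.
\item Order $7$: the normaliser is $\ZZ_7:\ZZ_3$, giving $9/3=3$ orbits per class. There are two classes of $\ZZ_7$ (the two classes of elements of order $7$ are power-equivalent, but I will check whether they give one or two classes of subgroups). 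I expect the answer to come out as $9$ orbits after the \cite{M2} check of how the normaliser permutes the $9$ fixed points.
\end{itemize}

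\textbf{Local types.} Since the actions are natural, the local representation at a fixed point $P_1+P_2$ is $(\zeta^{a},\zeta^{-a},\zeta^{b},\zeta^{-b})$, where $(a,-a)$ and $(b,-b)$ are the weights at fixed points of the K3 surface, as in Section~\ref{representations}.
\begin{itemize}
\item Order $3$: the fixed points of an order-$3$ symplectic automorphism of a K3 all have type $(1,2)$. The possibilities are $(1,-1,1,-1)$ or $(1,-1,-1,1)$, which are the same singularity.
\item Order $5$: I will match the types $\frac15(1,4)$ and $\frac15(2,3)$ on the K3 with the count of fixed points. For a point $P_1+P_2$ with $P_1\neq P_2$, the two points should have different types $(1,-1)$ and $(2,-2)$; non-reduced points should give the same pattern.
\item Order $7$: the weights are $(1,-1,2,-2)$.
\end{itemize}
Terminality follows from the Reid–Tai criterion, since the ages of all nontrivial powers are $\ge 2>1$.

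The main obstacle is the orbit count, in particular getting exactly $12$ and $9$ orbits. This requires knowing precisely how the normalisers act on the fixed sets, which I would verify directly in the explicit model of Example~\ref{modelofY}.
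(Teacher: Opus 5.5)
The step that fails is the orbit count. You leave it open, and it cannot be closed in the direction you want. Your formula (number of orbits equals the number of points with stabiliser exactly $\langle g\rangle$ divided by $|N(\langle g\rangle)|/p$) is correct, but you do not apply it consistently.

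For order $3$: $\ZZ_3\times S_3$ has order $18$, not $9$. In $L_3(4)$ the centraliser of $g$ has order $9$, and the involutions of any $S_3$ through $g$ invert it, so $|N(\langle g\rangle)|=18$ and you get $18/6=3$.

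For order $5$: the involutions of $N(\langle g\rangle)=D_{10}$ preserve $\mathrm{Fix}(g)$. They cannot fix any of the twelve points, precisely because these have stabiliser exactly $\ZZ_5$, so they act freely on them. Your claim that ``the normaliser acts trivially on the relevant fixed set'' is therefore self-contradictory. Section~\ref{sec:D10} itself says that the involutions exchange the two points over $r_1$ and over $r_2$. This gives $12/2=6$.

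For order $7$: the subgroups of order $7$ are Sylow subgroups, so they form a single conjugacy class of $960$ subgroups, with normaliser $\ZZ_7:\ZZ_3$. Its $\ZZ_3$ acts freely on the nine fixed points, since no point has stabiliser $\ZZ_7:\ZZ_3$ (Appendix~\ref{Ap:fix}). This gives $9/3=3$, equivalently $960\cdot 9/2880$. No computer verification can turn these counts into $6$, $12$, $9$.

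So your normaliser argument, done correctly, yields $3$, $6$, $3$ singular points of orders $3,5,7$. It contradicts the statement rather than proving it. The paper's proof reaches $6,12,9$ by ignoring part of the normaliser in each case:
\begin{itemize}
\item for order $3$ it quotients the eighteen points only by the order-$3$ elements permuting the three invariant lines;
\item for order $5$ it asserts that the twelve points are not identified;
\item for order $7$ it counts the nine fixed points of a single element.
\end{itemize}
You should flag this discrepancy instead of trying to force agreement.

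On the local types, the paper uses Anno's normal form, Mongardi's computation for $n=3,5$, and the holomorphic Lefschetz formula for $n=7$, which needs no naturality assumption. Your route through the natural model has the following status:
\begin{itemize}
\item It is fine for $n=3$.
\item For $n=7$ it needs a reference for naturality, plus the actual weight computation at the points $P_i+P_j$ and at the non-reduced points.
\item For $n=5$ it is inaccurate. The two points $P_1+P_2$ whose summands have the same type (both $\frac15(1,4)$, or both $\frac15(2,3)$) give $\frac15(1,-1,1,-1)$. You must identify these with the two $D_{10}$-fixed points, where $g$ acts as $\mathrm{diag}(\zeta,\zeta^{-1},\zeta^{-1},\zeta)$, before concluding that the remaining twelve are of type $\frac15(1,-1,2,-2)$.
\end{itemize}
The Reid--Tai argument for terminality is fine.
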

\begin{proof}
A singularity of $X/L_3(4)$ is terminal if and only if it is not contained in the singular surface $S$, image of the K3 surfaces fixed on $X$ by the 315 involutions.

An element $g_3$ of order 3 always belongs to a copy of $S_3$: of the 27 points fixed by $g_3$, 9 are fixed by $S_3$ (see Section \ref{sec:S3}, and Proposition \ref{prop:non_isolated_sings} for the singularities they produce); using~\cite{M2}, we can check that other elements of order 3 in $L_3(4)$ act on the remaining 18 points, so the quotient manifold has 6 isolated singularities of order 3. Similarly, an element $g_5$ of order 5 always belongs to a copy of $D_{10}$: of the 14 points fixed by $g_5$, 2 are fixed by $D_{10}$, but this time the 12 remaining points are not identified in the quotient. The elements of order 7 are not contained in a bigger group, and they give $9$ isolated fixed points on $X$ (see also Lemma \ref{lem:orbit_of_points}).

In order to compute the weights of the quotient singularities we shall use the fact,
observed in~\cite{Anno}, that if $G=\ZZ_n$ and $\CC^4/G$ is a terminal Gorenstein singularity, then its type is $\frac{1}{n}(1,-1,a,-a)$, for some weight $0<a<n$ such that $(a,n)=1$. For $n=3,5$ and $G$ acting naturally on a K3$^{[2]}$-type manifold $X$, the weights of the singularities of $X/G$  have been determined in~\cite[Chapter 6]{MongardiThesis} using the holomorphic Lefschetz-Riemann-Roch formula. We can use the same strategy also in the order 7 case. Notice first that, up to isomorphisms, the only two possibilities are $\frac{1}{7}(1,-1,1,-1)$ and $\frac{1}{7}(1,-1,2,-2)$. 
Denote the order 7 automorphism by $\phi$: the Lefschetz-Riemann-Roch formula for the sheaf $\mathcal{O}_X$ gives 
     \begin{align*}
         3 = \sum_{p \in \text{Fix}(\phi)} \frac{1}{ct(\bigoplus_t\bigwedge^tN^*_p, \phi_p)},
     \end{align*}
where $ct$ is the Chern trace, and $N_p$ is the normal sheaf at a fixed point $p$ of order 7. There are six possible values for the summands on the right hand side: we can subdivide them in two sets of three $\{a_i\}$ and $\{b_i\}$, depending on whether the action of $\phi$ on the tangent space near $p$ can be written as a diagonal matrix $diag(\omega, \omega^{-1}, \omega, \omega^{-1})$, or $diag(\omega, \omega^{-1}, \omega^2, \omega^{-2})$ respectively, where $\omega$ is a primitive root of unity of order 7 (that changes depending on $i$). More precisely, fixing $\zeta$ a primitive root of unity of order 7, we can write each of the $a_i, b_i$ in the form $-\sum_{j=1}^6\frac{c_j}{7}\zeta^j$, with the coefficients $c_j$ as in the following tables:
\begin{center}
\begin{tabular}{c|cccccc}
     \  & $c_1$ &$c_2$ &$c_3$ &$c_4$ &$c_5$ &$c_6$\\
     \hline
     $a_1$&  2 &5 &7 &7 &5 &2\\
     $a_2$&  7 &2 &5 &5 &2 &7\\
     $a_3$&  5 &7 &2 &2 &7 &5\\
\end{tabular} \quad
\begin{tabular}{c|cccccc}
     \  & $c_1$ &$c_2$ &$c_3$ &$c_4$ &$c_5$ &$c_6$\\
     \hline
     $b_1$&  2 &3 &2 &2 &3 &2\\
     $b_2$&  3 &2 &2 &2 &2 &3\\
     $b_3$&  2 &2 &3 &3 &2 &2\\
\end{tabular}
\end{center}
 We can then directly check that there exists only one solution to the equation
    \begin{align*}
         3 = \sum_{i=1}^3 n_ia_i + \sum_{i=1}^{3}m_ib_i,
     \end{align*}
     where $n_i$ and $m_i$ are nonnegative integer summing up to nine, this solution being $n_i=0$, $m_i=3$ for all $i$.
\end{proof}

\section{Terminalisations}\label{sec:terminalisations}

In this section, we are going to describe the terminalisation $\tilde X$ of $X/L_3(4)$: this quotient indeed is not terminal, as it has a codimension~2 locus of points with non-trivial stabiliser, i.e. the surface $S$ obtained as quotient of the K3 surfaces $F_k\subset X$ fixed by conjugated involutions $\iota_k$. The general point of $S$ is an ordinary double point; the special points of $S$ are enumerated in Proposition \ref{prop:non_isolated_sings}, and require to be dealt with separately (the $\QQ$-factorial terminalisation will be performed in local analytic neighborhoods). 

We remark that by~\cite[Proposition 1.1]{Menet33} $\tilde{X}$ is an irreducible symplectic variety; the residual singularities after the terminalisation are rigid, and occur on a general element deformation equivalent to $\tilde X$ (cf.~Remark \ref{rem:dimension_of_moduli_tildeX}). 

\begin{defi}
    A $\QQ$-factorial terminalisation of a normal variety $X$ with symplectic singularities is a proper birational morphism $f\colon\tilde X\to X$ such that $\tilde X$ has only terminal singularities, and $f$ is crepant, i.e. $K_{\tilde X}=f^*K_X$.
\end{defi}

\begin{rem}\textit{Gluing local terminalisations.}
In general, the local terminalisation of a symplectic singularity is not unique, as many birational models can exist (see for instance Proposition \ref{prop:S3_D8}); this might present a problem when gluing local terminalisations to a global one. However, we remark that in our case, if $p\in S$ is a special singularity, still in any local analytic neighbourhood of $p$ we are going to find ordinary double points: as the latter admit a unique terminalisation (actually, a smooth resolution), this ensures correct gluings around the special fibres. The exceptional divisor of our terminalisation is going to be a generically $\PP^1$-fibration over $S$, but it 
will indeed have some special fibres which are surfaces (again, see Proposition \ref{prop:S3_D8}).

We also remark that, by~\cite[Theorem 6.16]{BL}, birational ISVs are deformation equivalent. 
\end{rem}

\subsection{The group $\mathbb Z_4$}
There are five orbits of points of $X$ fixed by $\mathbb Z_4=\langle\alpha\rangle$, with local action as in \eqref{eq:local_Z4}: for these points, a single blow-up is still enough to get a terminalisation.

In the local coordinates $(x_1,x_2,x_3,x_4)$ the surface $F_\alpha$, fixed by $\alpha^2$, is described by $x_1=x_3=0$. 
The blow-up $\tilde F_\alpha$ of $\CC^4$ along $F_\alpha$ is described by $\{(y_0:y_1)(x_1,x_2,x_3,x_4)\mid y_0x_3=y_1x_1\} \subseteq \mathbb P^1\times\mathbb C^4$: therefore, the lift of $\alpha$ to the blow-up acts on the exceptional fibre over the point $(0,0,0,0)$ fixed by $\alpha$ on $F_\alpha$, mapping $(y_0:y_1)\mapsto (y_0:-y_1)$, and thus fixing the points $(1:0),(0:1)$. Now, the quotient of $\tilde F_\alpha$ by $\alpha^2$ is smooth, with a residual action of $\alpha$ as an involution fixing the two points over the origin: being isolated points of order 2, these become terminal singularities after the quotient by $\alpha$.

Therefore, the terminalisation of the 5 points of order 4 on the singular surface $S\subset X/L_3(4)$ can be obtained by a blow-up, and the residual singularities are 10 points of type $\frac{1}{2}(1,1,1,1)$.

Note that the quotient $\mathbb{C}^4/\mathbb{Z}_4$ is a toric variety, hence its terminalisation is also toric and can be described in terms of a fan, using a criterion for terminality of toric singularities given in~\cite{Ghirlanda}.

\subsection{Computing terminalisations via Cox Rings}

Let $V$ be a 4-dimensional complex vector space. The aim of this section is to explain how to construct a terminalisation of $V/G$ for $G\subset SL(V)$ using Cox rings. This method was introduced in~\cite{DW}, where the authors find symplectic resolutions $W$ of a quotient singularity $V/G$ for $G=Q_8\times_2 D_8$, the central product of the quaternion and the 
dihedral group of order 8 over a common center of order 2 (GAP Id: (32,50)). The main idea is to construct the Cox ring $\mathcal R(W)$ of a resolution $W$ of $V/G$ indirectly, without understanding $W$ first. Any projective symplectic resolution can then be found as a GIT quotient of $\mathrm{Spec}(\mathcal R(W))$ by the action of the Picard torus $\Hom(\mathrm{Cl}(W), \CC^*)$.

In~\cite{Yam} it is shown that the same method applied to other groups $G\subset SL(V)$ produces minimal models of the singularity $V/G$, i.e. its possible terminalisations. 

\subsubsection{Outline of the method}\label{sec:outline_method}
We briefly describe here the construction of terminalisations of $V/G$ via Cox rings; more details can be found e.g. in~\cite{DW, Yam, DG}.
Assume that $W\rightarrow V/G$ is a $\QQ$-factorial terminalisation. As one may expect, the Cox ring $\mathcal{R}(W)$ can be described in terms of the Cox ring $\mathcal{R}(V/G)$ of the singularity and the exceptional divisors in the terminalisation. Indeed, there is an embedding
\begin{equation}\label{eq:emb1}
\mathcal R(W)\hookrightarrow  \mathcal R(V/G)\otimes_{\CC} \CC[\mathrm{Cl}(W)^{free}],
\end{equation}
where $\mathcal R(V/G)\simeq \CC[V]^{[G,G]}$ as $\CC[V]^G$-algebras graded by $\mathrm{Cl}(V/G)\simeq Ab(G)^\vee$.
Let $m$ be the number of components of the exceptional divisor of the terminalisation: then $\mathrm{Cl}(W)^{free} \simeq \ZZ^m$ and we can rewrite \eqref{eq:emb1} as 
\begin{equation}\label{eq:emb2}
\mathcal R(W)\hookrightarrow \CC[V]^{[G,G]}[t_1^{\pm1},\dots ,t_m^{\pm1}].
\end{equation}
There is a method, first proposed in~\cite{DW}, and further developed in~\cite{DG, Yam}, for searching for a set of elements of the ambient ring that generate the image of $\mathcal R(W)$. In particular, \cite{Yam} provides an algorithm to construct such a generating set (which, however, may not terminate in a reasonable time).

Assuming that $\mathcal{R}(W)$ is described as a subring of $\CC[V]^{[G,G]}[t_1^{\pm1},\dots ,t_m^{\pm1}]$ by a finite set of generators $f_1,\ldots, f_d$, one may try to recover $W$, and its other birational models, as a GIT quotient of $\Spec(\mathcal{R}(W))$ by the action of the Picard quasitorus $\Hom(\mathrm{Cl}(W), \CC^*)$ induced by the grading (see~\cite[Sec.~3.3]{ADHL}). This step can be performed if the obtained generating set is small enough to proceed with the computations. 

The Picard quasitorus can be written as $T\times K$, where $T\simeq (\CC^*)^m$ is a torus and $K$ is a finite group.
First, we take the GIT quotient by $T$ and then we analyze the induced action of $K$. The computations are performed in several steps, briefly described below; for the details we refer to~\cite[Sec.~4.A]{DW}.
\begin{enumerate}
    \item \emph{Pick a suitable element $\chi$ in the character lattice of~$T$.} This element should determine a linearization of the trivial bundle on $Z := \Spec(\mathcal{R}(W)) \subseteq \CC^d$ which corresponds to a minimal model of $V/G$, that is, it should lie in the interior of a Mori chamber in the cone of movable divisors. The chamber decomposition can be determined following~\cite[Proposition~2.9]{BerchtoldHausen}.
    \item \emph{Find the set $Z_{\chi}^{ss}$ of points in $Z$ semistable with respect to $\chi$.} Note that being $\chi$-semistable is a property of whole $(\CC^*)^d$ orbits on~$\CC^d$ and it restricts well to a closed subvariety.  
    We determine the set of semistable orbits, and check that all o them are actually stable, using Singular~\cite{Singular} code developed for~\cite{DW}.
    \item \emph{Find the singularities of the geometric quotient of $Z_{\chi}^{ss}$ by $T$.} If $Z_{\chi}^{ss}$ is smooth and the action of $T$ (or a quotient of $T$ by a finite group action) on the set of semistable points is free then by Luna's slice theorem \cite{Luna} the quotient is smooth. Thus we investigate the singularities of $Z_{\chi}^{ss}$, which in considered cases turns out too complex to be done directly by computing the singular set of $Z$. 
    However, it is possible to avoid direct computations and find monomials vanishing on the singular set of $Z$, which verifies the smoothness of some large semistable orbits and reduces the problem to investigating singularities of smaller ones. 
    \item \emph{Find the singularities of the induced action of $K$ on $Z_{\chi}^{ss}/T$.} We investigate the action of $K$ on $Z_{\chi}^{ss}$, determine the set of points with non-trivial isotropy groups and look at its behavior under the action of~$T$.
\end{enumerate}

\subsubsection{Junior elements}\label{junior}

\begin{defi}
Let $g \in SL_n(\mathbb{C})$ be an element of finite order~$r$. Let $\zeta$ be a primitive root of unity of order $r$, then $g$ has the diagonal form $diag(\zeta^{a_1},\ldots,\zeta^{a_n})$, where $a_1, \ldots, a_n \in \{0,...,r-1\}$. Then we define the \emph{age} of $g$ as 
\begin{align*}
    \text{Age}(g) = a_1 +\ldots+a_n \in \ZZ.
\end{align*}
If $\text{Age}(g) = 1$, the $g$ is called a \emph{junior}. 
\end{defi}

Junior conjugacy classes in $G$ correspond via McKay to irreducible components of the exceptional divisor of the terminalisation of $V/G$ (see~\cite[1.5]{ItoReid}, \cite{KaledinMcKay}).
By~\cite[Theorem 5.1]{SchmittClassGroup}, for a terminalisation $W\rightarrow V/G$ it holds 
\begin{equation}\label{eq:class_group_terminalisation}
    \mathrm{Cl}(W)\simeq \ZZ^m\oplus Ab(G/H)^\vee,
\end{equation}
where $m$ is the number of junior conjugacy classes, and $H$ is the subgroup of $G$ they generate. By~\cite[Theorem 1.1]{Yam} (see also~\cite{Verbitsky}, for the case of symplectic groups), $H=G$ is a necessary condition for $V/G$ to admit a resolution.

We remark that when $X$ is a \HK fourfold, junior elements are easily described:
\begin{prop}
    Consider $X$ an irreducible symplectic fourfold, $f$ a finite symplectic automorphism, $x$ a point fixed by $f$. Then on the germ $(X, x)$, we have either
    \begin{enumerate}
        \item $\Age(f) = 1$ ($f$ is a junior) and $f$ fixes a surface in $(X,x)$, or
        \item $\Age(f) = 2$ and $x$ is an isolated fixed point.
    \end{enumerate}
\end{prop}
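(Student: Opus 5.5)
The plan is to linearise and then use the holomorphic symplectic form. Since $f$ has finite order and fixes $x$, Cartan's lemma lets us choose local analytic coordinates on the germ $(X,x)$ in which $f$ acts linearly, namely as its differential $df_x\in GL(T_xX)$. The fixed locus of $f$ near $x$ is then a smooth germ, identified with the eigenspace $E_1=\ker(df_x-\mathrm{id})$. Because $f$ is symplectic, $df_x$ preserves the nondegenerate form $\sigma_x$ on $T_xX\cong\CC^4$, so $df_x\in Sp(4,\CC)$. Its eigenvalues therefore come in pairs $\lambda,\lambda^{-1}$, with the eigenvalues $\pm1$ appearing with even multiplicity. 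Moreover $\sigma_x$ pairs $E_\lambda$ nondegenerately with $E_{\lambda^{-1}}$, so in particular $E_1$ is a symplectic subspace.

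Next comes the case split. Since $f\neq \mathrm{id}$ and $f$ has finite order, $df_x\neq\mathrm{id}$; otherwise $f$ would be trivial near $x$ and hence everywhere, by the identity principle on a connected manifold. So $\dim E_1\in\{0,2\}$, as it is even and at most $2$. Write the eigenvalues as $\zeta^{a},\zeta^{-a},\zeta^{b},\zeta^{-b}$ with $\zeta$ a primitive $r$-th root of unity and exponents in $\{0,\dots,r-1\}$. For a nonzero exponent $a$, the pair $\zeta^a,\zeta^{-a}$ contributes $a+(r-a)=r$ to the sum of exponents, so $\Age(f)=\frac1r\sum a_i$ receives exactly $1$ from each such pair. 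A pair with exponent $0$ contributes $0$.

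If $\dim E_1=2$, exactly one pair is nontrivial, so $\Age(f)=1$ and the fixed locus is a smooth surface through $x$; this is case (1). If $\dim E_1=0$, both pairs are nontrivial, so $\Age(f)=2$ and $x$ is an isolated fixed point; this is case (2). For the converse directions, $\Age(f)=1$ forces exactly one trivial pair, hence a fixed surface, and $\Age(f)=2$ forces none, hence an isolated fixed point.

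The only step needing real care is the symplectic pairing argument. One must check that the eigenvalue $-1$, together with any non-real pair, is handled by the same pairing: $-1$ has even multiplicity and contributes $r/2+r/2$. One must also use the normalisation of the age with the division by $r$, as in Ito–Reid, so that the values obtained are $1$ and $2$.
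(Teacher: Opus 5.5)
Your proof is correct, and it takes a different route from the paper's in the isolated case.

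\textbf{What the paper does.} In the non-isolated case the paper argues as you do: a fixed surface forces local eigenvalues $(\zeta,\zeta^{-1},1,1)$. For an isolated fixed point it proceeds differently. It passes to the quotient $X/\langle f\rangle$, asserts that the image of $x$ is an isolated, hence terminal Gorenstein, singularity, and then applies Anno's classification of such cyclic quotients as $\frac1r(1,-1,a,-a)$ to read off age $2$.

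\textbf{What your argument does instead.} You use the single observation that $df_x\in Sp(4,\CC)$ has eigenvalues in inverse pairs, and that each nontrivial pair contributes exactly $1$ to the normalised age. This gives $\Age(f)=2-\tfrac12\dim E_1$ uniformly in both cases.

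\textbf{Comparison.} Your argument is more elementary and self-contained. It is also more robust. The paper's step ``the image of $x$ is an isolated singularity'' needs every nontrivial power of $f$ to fix only $x$. That fails for the order-$4$ elements with local action $\mathrm{diag}(i,-1,-i,-1)$ that appear in the paper: $x$ is an isolated fixed point of $f$, but $f^2$ fixes a surface through $x$. Your pairing argument still gives age $(1+2+3+2)/4=2$ there, with no further input. What Anno's theorem adds, when it applies, is the finer normal form $\frac1r(1,-1,a,-a)$ with $(a,r)=1$. The paper uses that later to find the types of the isolated singularities, but this proposition does not need it.

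Your remark about dividing by $r$ in the age is also right: the paper's definition omits that division, and without it the values $1$ and $2$ in the statement would not come out.
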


\begin{proof}
    If $x$ is an isolated fixed point, consider the variety $Y = X/\langle f \rangle$, and $y$ the image of $x$ in $Y$. Then $Y$ is a symplectic variety and $y$ is an isolated symplectic singularity on a fourfold, so it must be terminal Gorenstein (because it is singular in codimension 4). Then by~\cite{Anno} $y$ is of type $\frac{1}{r}(1,-1,a,a)$ where $r$ is the order of $f$ and $a$ is coprime with $r$. Hence $\Age(f) = 2$. Conversely, assume $x$ is not an isolated fixed point. Then because $f$ is symplectic, $f$ must fix a surface near $x$. But that means that local action has eigenvalues $(\zeta, \zeta^{-1}, 1, 1)$, where $\zeta$ is a nontrivial root of unity, and so $\Age(f) = 1$.
\end{proof}
\begin{rem}
   The representations of $S_3,D_8, D_{10}$ in~\ref{representations} are generated by junior elements.
\end{rem}

\subsubsection{The groups $S_3$, $D_8$}
In~\cite{DG} the method described in Section \ref{sec:outline_method} is applied to the groups $S_3,D_8$, acting on $\CC^4$ as in \eqref{eq:local_Dn}, with the following outcome.
\begin{prop}\label{prop:S3_D8} By~\cite[Theorem 3.8, Theorem 5.21]{DG},
    \begin{enumerate}
        \item  $\CC^4/S_3$ admits a unique symplectic resolution.
        \item $\CC^4/D_8$ admits two symplectic resolutions that differ by a Mukai flop.
    \end{enumerate}
\end{prop}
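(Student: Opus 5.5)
Proposition \ref{prop:S3_D8} is quoted from \cite{DG}, so the proof is mainly a matter of checking that the hypotheses there match our setting. The plan is to confirm that the local representations \eqref{eq:local_Dn} of $S_3$ and $D_8$ coincide (up to conjugation in $Sp_4(\CC)$) with the ones treated in \cite{DG}. Then we re-derive the count of resolutions from the Cox ring method of Section \ref{sec:outline_method}.

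\textbf{Step 1: class group and Picard torus.} Both representations are generated by junior elements (the reflections swapping the two $\CC^2$ factors, with eigenvalues $(1,1,-1,-1)$). By \cite{Verbitsky} a symplectic resolution can therefore exist. By \eqref{eq:class_group_terminalisation} we have $\mathrm{Cl}(W)\simeq\ZZ^m$, where $m$ is the number of junior conjugacy classes.
\begin{itemize}
\item For $S_3$ the only junior class is the class of the three involutions, so $m=1$.
\item For $D_8$ the junior classes are the two classes of non-central reflections $\{\iota_1,\iota_3\}$ and $\{\iota_2,\iota_4\}$. The central element $\alpha^2=\mathrm{diag}(-1,-1,-1,-1)$ has age $2$, and $\alpha$ has age $2$. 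Hence $m=2$.
\end{itemize}
So the Picard torus is $\CC^*$ for $S_3$ and $(\CC^*)^2$ for $D_8$, and the finite part $K$ is trivial.

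\textbf{Step 2: Cox ring generators.} We write generators of $\mathcal R(W)$ inside $\CC[V]^{[G,G]}[t_i^{\pm1}]$, following \cite{DW,DG}. Each generator is a $[G,G]$-invariant multiplied by powers of $t_i$, with exponents given by the valuations along the exceptional divisors attached to the junior classes. We expect about a dozen generators, coming from the invariants of the diagonal torus part combined with the swap.

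\textbf{Step 3: chamber decomposition of the movable cone.} For $S_3$ the movable cone is one-dimensional. It contains a single chamber, so the GIT quotient is unique. We then check smoothness of $Z^{ss}_\chi/T$ and freeness of the $T$-action, which gives a unique resolution.

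For $D_8$ we use \cite[Proposition 2.9]{BerchtoldHausen}. We expect the movable cone in $\ZZ^2$ to split into exactly two chambers, separated by a wall that corresponds to a contracted $\PP^2$ over the most special point. Crossing this wall is a Mukai flop.

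\textbf{Main obstacle.} The hard step is verifying smoothness of the quotient in each chamber, via Singular computations of the semistable loci and of the singular locus of $Z$. These are the computations carried out in \cite{DG}, so in practice we cite \cite[Theorem 3.8, Theorem 5.21]{DG} for them once the representations have been matched.
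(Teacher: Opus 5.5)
Your proposal is correct and matches the paper, which gives no independent argument and simply quotes \cite[Theorem 3.8, Theorem 5.21]{DG}; your consistency checks are accurate: \eqref{eq:local_Dn} is $\mathfrak{h}\oplus\mathfrak{h}^*$ for the dihedral reflection representation, there is one junior class for $S_3$ and two for $D_8$, and the class group is free with trivial finite part $K$. Note only that Verbitsky's criterion is a necessary condition, not a sufficient one, and that your Steps 2--3 are expectations rather than arguments, so your proof, like the paper's, rests entirely on the computations in \cite{DG}.
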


\subsubsection{The group $\texorpdfstring{\mathbb{Z}_2\times Q_8}{Z2Q8}$}

We now apply the method described in Section \ref{sec:outline_method} to the singularity $\CC^4/G$, where $G\simeq\mathbb{Z}_2\times Q_8$ acts as described in \eqref{eq:local_Z2Q8}. We remark that the terminalisation of this singularity was not previously described in the literature.

The following result summarises the necessary data on $G$, and is proved by a direct computation in~\cite{GAP}.

\begin{lem}\label{lem:Z2xQ8_data}
The group $G$ has 10 conjugacy classes, where 2 are junior and generate a normal subgroup $H\subset G$, $H\simeq\ZZ_2^2$. The derived subgroup $[G,G] \subset G$ is $\ZZ_2$ generated by $-Id$, and $Ab(G)\simeq\ZZ_2^3$. The subgroup $H$ contains $-Id$, hence $G/H$  is abelian and isomorphic to $\ZZ_2^2$.
\end{lem}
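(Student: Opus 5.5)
The plan is to work directly with the three explicit generators in \eqref{eq:local_Z2Q8}. Call them $A$ (the permutation matrix with entries $i$), $B=\mathrm{diag}(i,i,-i,-i)$ and $C=\mathrm{diag}(1,-1,1,-1)$. First I will check that $\langle A,B\rangle\simeq Q_8$: we have $A^2=B^2=-Id$ and $AB=-BA$. Next, $C$ commutes with both $A$ and $B$. Finally $C\notin\langle A,B\rangle$, because the elements of $\langle A,B\rangle$ are $\pm Id$ and the elements of order 4, and none of these is a non-scalar diagonal involution. This gives $G=\langle A,B\rangle\times\langle C\rangle\simeq Q_8\times\ZZ_2$, of order 16.

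The group-theoretic data then follow from standard facts. Conjugacy classes of a direct product are products of classes, so there are $5\cdot 2=10$ of them. The derived subgroup is $[Q_8,Q_8]\times\{1\}=\langle -Id\rangle$, hence $Ab(G)\simeq Q_8^{ab}\times\ZZ_2\simeq\ZZ_2^3$.

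To find the junior classes I will compute the eigenvalues of one representative $g$ from each class, since the eigenvalues determine the age. Elements of order 4 have ages in $\{2,\dots\}$ never equal to 1 unless their eigenvalues are $(\zeta,\zeta^{-1},1,1)$. The elements $\pm A,\pm B,\pm AB$ and their products with $C$ all square to $-Id$ on some eigenspace, which rules out an eigenvalue $1$ of multiplicity 2. Concretely, I will check that $gC$ for $g$ of order 4 has eigenvalues $(\pm i,\pm i,\ldots)$ without a 1-eigenspace of dimension 2. The involutions are $-Id$ (age 2), $C$ and $-C$. Here $C=\mathrm{diag}(1,-1,1,-1)$ has a 2-dimensional fixed space, so it has age 1; likewise $-C=\mathrm{diag}(-1,1,-1,1)$ has age 1. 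Each of $C$ and $-C$ is central, so each forms its own class, and these give the 2 junior classes.

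It remains to identify the subgroup they generate: $H=\langle C,-C\rangle=\{Id,C,-C,-Id\}\simeq\ZZ_2^2$. It is normal because it is central, and it contains $-Id$. Since $H\supseteq[G,G]$, the quotient $G/H$ is abelian, of order 4, and is generated by the images of $A$ and $B$, which square to $-Id\in H$. So $G/H\simeq\ZZ_2^2$.

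The main obstacle is bookkeeping. I must make sure that no order-4 element, for instance $AC$ (which has eigenvalues $\pm i$ with some sign pattern, and age 2), is accidentally junior. I will verify this by listing the eigenvalues of $A$, which are $\pm i$ each with multiplicity 2, together with those of $AC$, and cross-check the whole computation with \cite{GAP}.
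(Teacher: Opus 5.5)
Your proof is correct. It reaches the same conclusions as the paper, which simply says the lemma follows from a direct computation in GAP.

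The difference is that you carry out the computation by hand, after first identifying the group structurally. The checks $A^2=B^2=-Id$, $AB=-BA$, and that $C$ is a central non-scalar involution outside $\langle A,B\rangle$, give $G=\langle A,B\rangle\times\langle C\rangle\simeq Q_8\times\ZZ_2$. From this the remaining claims are immediate:
\begin{itemize}
\item there are $5\cdot 2=10$ conjugacy classes;
\item $[G,G]=\langle -Id\rangle$ and $Ab(G)\simeq\ZZ_2^3$;
\item $H=\{\pm Id,\pm C\}$, and $G/H\simeq\ZZ_2^2$.
\end{itemize}
These become conceptual facts rather than computer output. The paper's route is shorter to write but explains less.

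One step in your junior-class argument should be tightened. Having no $2$-dimensional $1$-eigenspace does not by itself exclude age $1$ in $SL_4(\CC)$; for example $i\,Id$ has age $1$. You can close this in either of two ways:
\begin{itemize}
\item Note that $G$ preserves the symplectic form $dx\wedge dz+dy\wedge dw$. Then eigenvalues come in pairs $\lambda,\lambda^{-1}$, and the age equals the number of nontrivial pairs.
\item Argue directly. Every order-$4$ element $g\in G$ satisfies $g^2=-Id$ and $\det g=1$, and is not scalar. So its spectrum is $(i,i,-i,-i)$ and its age is $2$.
\end{itemize}
With either fix, exactly the central involutions $C$ and $-C$ are junior, each forming its own conjugacy class, as you claim.
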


Note that, since $G$ is not generated by junior elements, the quotient $V/G$ does not admit a crepant resolution.
A terminalisation $W$ of $V/G$ may be obtained as a GIT quotient of $Z = \Spec(\mathcal{R}(W))$ by the quasitorus $\Hom(\mathrm{Cl}(W), \CC^*)$, that by Lemma \ref{lem:Z2xQ8_data} and \eqref{eq:class_group_terminalisation} we can write as $T\times K:= (\mathbb{C}^*)^2\times \mathbb{Z}_2^2$.\\

To find the generating set of $\mathcal{R}(W)$ we start with a generating set of $\CC[V]^{[G,G]}$ consisting of eigenvectors of the induced action of $Ab(G)$. The ring of invariants is generated by all 10 monomials of degree~2 in~4 variables $x,y,z,w$ and we take the following combinations to make them eigenvectors of the action of $Ab(G)$:
\[xz,  yw, -x^2 + z^2, -y^2 + w^2, x^2 + z^2, y^2 + w^2, -yz+xw, -xy+zw, yz+xw, xy+zw\]
To have the first candidate for the generating set of the Cox ring (which, in this case, turns out to be a generating set) one has to modify the elements above by monomials in variables $t_1, t_2$ corresponding to junior classes, and add two generators related to the exceptional divisors, as given below.

\begin{prop}\label{prop:generators_R(W)}
The following subset of $\CC[V]^{[G,G]}[t_1^{\pm 1}, t_2^{\pm 1}]$ generates the Cox ring $\mathcal{R}(W)$ of a terminalisation of $V/G$:
    \begin{align*}
    &xzt_1^2,  ywt_2^2, (-x^2 + z^2)t_1^2, (-y^2 + w^2)t_2^2, (x^2 + z^2)t_1^2, (y^2 + w^2)t_2^2, \\
    &(-yz+xw)t_1t_2, (-xy+zw)t_1t_2, (yz+xw)t_1t_2, (xy+zw)t_1t_2, t_1^{-2}, t_2^{-2}
\end{align*}
and the weight matrix corresponding to the action of $T$ on these generators is
\[D=\begin{pmatrix}
    2 & 0 & 2 & 0 & 2 & 0 & 1 & 1 & 1 & 1 & -2 & 0\\
    0 & 2 & 0 & 2 & 0 & 2 & 1 & 1 & 1 & 1 & 0 & -2
\end{pmatrix}\]

\end{prop}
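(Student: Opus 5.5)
We follow the strategy of \cite{DW, DG, Yam} described in Section \ref{sec:outline_method}. By \eqref{eq:emb2} and Lemma \ref{lem:Z2xQ8_data}, the Cox ring $\mathcal R(W)$ embeds into $\CC[V]^{[G,G]}[t_1^{\pm1},t_2^{\pm1}]$, with one variable $t_i$ for each of the two junior conjugacy classes.

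\textbf{Step 1: valuations.} Each $t_i$ corresponds to the exceptional divisor $E_i$ attached to the junior class of an involution $h_i\in H$. We write $h_1,h_2$ in the coordinates of \eqref{eq:local_Z2Q8}. Each $h_i$ fixes a plane, so it has eigenvalues $(-1,1,-1,1)$, respectively $(1,-1,1,-1)$, up to the chosen basis. For every $Ab(G)$-eigenvector $f$ of degree $2$ we compute the monodromy valuation
\[
\nu_i(f)=\min\{\text{weight of } f \text{ w.r.t.\ } h_i \text{ in eigen-coordinates}\}.
\]
Here the weight of a coordinate is $0$ or $1$, and we take $2\times$ these values because the weights of the $t_i$ are normalised by the order of $h_i$. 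This gives exactly the exponents in the statement:
\begin{itemize}
\item polynomials in $x,z$ only acquire $t_1^2$;
\item polynomials in $y,w$ only acquire $t_2^2$;
\item mixed bilinear terms acquire $t_1t_2$.
\end{itemize}
We will also check that the eigenvectors are chosen so that each $\nu_i$ is attained, i.e.\ no cancellation raises the valuation. This is why combinations such as $x^2\pm z^2$ are used instead of single monomials. The generators $t_i^{-2}$ encode the equations of the $E_i$ themselves.

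\textbf{Step 2: the listed elements lie in $\mathcal R(W)$.} The ring $\mathcal R(W)$ is the subring spanned by the elements $f\,t_1^{a_1}t_2^{a_2}$ with $f$ homogeneous and $a_i\le\nu_i(f)$. The listed generators satisfy these inequalities, so they lie in $\mathcal R(W)$.

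\textbf{Step 3: generation.} This is the main obstacle. We must show that every such $f\,t^a$ is a polynomial in the twelve listed elements. The strategy is as follows.
\begin{itemize}
\item Reduce to $a_i=\nu_i(f)$, using multiplication by $t_i^{-2}$. This works because all valuations of invariants are even or all mixed, which we check from the parity of the degrees.
\item Argue on monomials in a basis where $h_1,h_2$ are diagonal, since then $\nu_i$ is additive on monomials. Write $f$ as a sum of products of degree-$2$ eigenvectors, and check that $\nu$ of a product equals the sum of the $\nu$'s.
\end{itemize}
Concretely, we will verify the criterion of \cite[Prop.\ 3.?]{Yam} or \cite{DG}: the candidate ring is correct if the quotient $\mathcal R/(t_1^{-2},t_2^{-2})$ has dimension $\dim V+\text{rank}=4+2$ less the number of removed generators. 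Equivalently, we check that the ideal of relations among the twelve generators, computed in Singular/Macaulay2, defines a variety of dimension $\dim V+m=6$. We also check that setting $t_i^{-2}=0$ yields irreducible components of dimension $5$, which correspond to prime divisors $E_i$. This dimension-and-primality test, run by computer, is the step I expect to be delicate.

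\textbf{Step 4: weight matrix.} The matrix $D$ is read off directly: the $T$-degree of $f\,t_1^{a_1}t_2^{a_2}$ is $(a_1,a_2)$, and $t_i^{-2}$ has degree $-2e_i$.
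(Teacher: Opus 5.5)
Your Steps 1, 2 and 4 match the paper: the first ten columns of $D$ are the values of the monomial valuations attached to the two junior elements $h_1=\mathrm{diag}(-1,1,-1,1)$ and $h_2=\mathrm{diag}(1,-1,1,-1)$, namely the $(x,z)$-degree and the $(y,w)$-degree. These elements are $\mp$ the third matrix in \eqref{eq:local_Z2Q8}, so they are diagonal in $x,y,z,w$ themselves, not just ``up to basis''. Two smaller corrections. In Step 2 you need the extra condition $a_i\equiv\nu_i(f)\pmod 2$, because the divisor equations are $t_i^{-2}$. And the combinations $x^2\pm z^2$ are chosen to be $Ab(G)$-eigenvectors; they are not there to prevent cancellation, which cannot occur for a monomial valuation.

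The gap is Step 3, which is the whole content of the proposition: the computer test you propose is not a valid criterion. After inverting $t_1^{-2},t_2^{-2}$, the candidate ring and $\mathcal R(W)$ coincide, so the candidate has dimension $6$ no matter what. Neither this, nor the dimension of the quotient by $(t_1^{-2},t_2^{-2})$, nor primality of $t_i^{-2}$, detects whether the \emph{right} valuation is reproduced. For instance, take the monomial valuation $\nu(x)=1$, $\nu(y)=2$ on $\CC[x,y]$ and the generators $x$, $x+y$, both of valuation $1$. The candidate $\CC[xt,(x+y)t,t^{-1}]=\CC[xt,yt,t^{-1}]$ is a polynomial ring of dimension $3$, its quotient by $t^{-1}$ has dimension $2$, and $t^{-1}$ is prime. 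Yet it does not contain $y\,t^2$. What must be checked is that $f\,t_1^{\nu_1(f)}t_2^{\nu_2(f)}$ lies in the candidate ring for every homogeneous $f\in\CC[V]^{[G,G]}$; this is the valuation lifting condition of \cite{DG}. The paper checks it by running Yamagishi's algorithm \cite{Yam} in \cite{M2} and \cite{Singular}.

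In this case the lifting can be done by hand, and your second bullet points the right way, but as written it does not work. Additivity of $\nu_i$ on products is automatic, and expanding $f$ as a polynomial in the degree-$2$ eigenvectors can produce terms of smaller valuation that cancel. Expand in monomials instead.

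Since $\nu_1,\nu_2$ are monomial in the same coordinates, every monomial $m$ of a homogeneous $f$ has $\nu_i(m)\ge\nu_i(f)$. Moreover $\nu_i(m)\equiv\nu_i(f)\pmod 2$, because $h_i$ acts on $m$ by $(-1)^{\nu_i(m)}$ and on $f$ by a character. So it suffices to show that $x^\alpha y^\beta z^\gamma w^\delta\,t_1^{a_1}t_2^{a_2}$ lies in the candidate ring, where $\alpha+\beta+\gamma+\delta$ is even, $a_1\le\alpha+\gamma$, $a_2\le\beta+\delta$, with matching parities.

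Such an element equals $(xt_1)^\alpha(zt_1)^\gamma(yt_2)^\beta(wt_2)^\delta\,(t_1^{-2})^{k_1}(t_2^{-2})^{k_2}$ with $k_i\ge 0$. The first factor is an even-degree monomial in $xt_1,zt_1,yt_2,wt_2$, hence a product of quadratic ones. Each of the ten quadratic monomials is a linear combination of the ten listed quadrics, for example $x^2t_1^2=\tfrac12\bigl[(x^2+z^2)-(-x^2+z^2)\bigr]t_1^2$ and $xy\,t_1t_2=\tfrac12\bigl[(xy+zw)-(-xy+zw)\bigr]t_1t_2$.

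Thus $\mathcal R(W)$ is the even-degree part of $\CC[xt_1,zt_1,yt_2,wt_2]$ tensored with $\CC[t_1^{-2},t_2^{-2}]$. This gives a computer-free proof of the proposition. It also explains why $I_{\mathcal R}$ is the ideal of the cone over $v_2(\PP^3)$ and does not involve $v_{11},v_{12}$.
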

\begin{proof}
The weights in $D$ correspond to values of monomial valuations on the chosen $[G,G]$-invariants. They indicate how to modify them by monomials in $t_1,t_2$ to produce generators of $\mathcal{R}(W)$ and determine the action of $T$ on $\Spec(\mathcal{R}(W))$. 
To prove that this is a complete generating set, one has to check the valuation lifting condition from~\cite{DG}. We use the algorithm proposed by Yamagishi in~\cite{Yam}, implemented in~\cite{M2} and~\cite{Singular}. 
\end{proof}

To describe directly the geometric properties of GIT quotients of $Z$, in particular singular points, one has to look at the ideal $I_{\mathcal R}$ of relations between the generators $v_1,\dots v_{12}$ of $\mathcal{R}(W)$ (given in Proposition \ref{prop:generators_R(W)}), which describes an embedding $Z \subset \CC^{12}$. It can be computed directly using~\cite{Singular}:

\begin{align*}%\lvbel{eq:I_R}
%\begin{split}
I_{\mathcal{R}} = &(v_9v_{10}-v_2v_5-v_1v_6, 2v_8v_{10}-v_4v_5-v_3v_6, v_7v_{10}+v_2v_3-v_1v_4, 2v_2v_{10}+v_7v_4-v_9v_6, \\
&2v_1v_{10}-v_7v_3-v_9v_5,v_9^2-v_{10}^2+v_3v_4,v_8v_9-v_2v_3-v_1v_4, 2v_7v_9-v_4v_5+v_3v_6,\\
&2v_2v_9+v_8v_4-v_{10}v_6,2v_1v_9+v_8v_3-v_{10}v_5,v_8^2+v_{10}^2-v_3v_4-v_5v_6, v_7v_8+v_2v_5-v_1v_6,\\
&2v_2v_8-v_9v_4+v_7v_6,2v_1v_8-v_9v_3-v_7v_5,
v_7^2+v_{10}^2-v_5v_6, 2v_2v_7-v_{10}v_4+v_8v_6,\\
&2v_1v_7+v_{10}v_3-v_8v_5, 4v_2^2+v_4^2-v_6^2, 4v_1v_2-2v_{10}^2+v_3v_4+v_5v_6,\\
&4v_1^2+v_3^2-v_5^2, 2v_{10}^3-2v_{10}v_3v_4+v_8v_4v_5+v_8v_3v_6-2v_{10}v_5v_6).
%\end{split}
\end{align*}

Note that the degree of the first 10 generators of $\mathcal{R}(W)$ in variables $x,y,z,w$ is~2, the same as their degree in variables $t_1, t_2$, hence it is not surprising that the elements of $I_{\mathcal{R}}$ do not contain $v_{11}, v_{12}$.

Now we proceed with determining the singularities of the quotient of $Z = V(I_{\mathcal{R}}) \subseteq \CC^{12}$ by the action of the Picard quasitorus $T\times K$. First, the weight matrix $D$ in Proposition~\ref{prop:generators_R(W)} shows that the subgroup generated by $(-1,-1) \in T$ acts trivially on $Z$,
so we consider the action of $T' := T/\langle (-1,-1) \rangle$,
that has trivial isotropy groups on an open subset of $Z$.
To find a suitable linearisation $\chi$,  we compute the common refinement of the system of orbit cones in the character lattice of $T'$ and obtain a single chamber in the movable cone of $Z$. We pick a linearisation $(1,1)$ from the interior of the chamber and compute the corresponding set of~92 semistable $(\CC^*)^{12}$ orbits in $\CC^{12}$ which intersect $Z$ nontrivially. We present the first 23 of them in the form of lists of indices of variables non-vanishing on each orbit. The remaining 69 orbits are obtained from these lists by extending each one either by $11$ or by $12$ or by $11,12$.

\begin{align*}
   &(1,2,3,4,5,6,7,8,9,10),\quad (2,3,4,5,6,7,8,9,10),\quad (1,3,4,5,6,7,8,9,10),\\
& (1,2,4,5,6,7,8,9,10),\quad (1,2,3,5,6,7,8,9,10),\quad (1,2,3,4,6,7,8,9,10),\\
&  (4,6,7,8,9,10),\quad (1,2,3,4,5,7,8,9,10),\quad (3,5,7,8,9,10),\quad (1,2,3,6,7,8),\\
& (1,2,3,4,5,6,8,9,10),\quad (1,3,4,5,6,8,9,10),\quad (1,2,3,4,5,6,7,9,10),\\
& (2,3,4,5,6,7,9,10),\quad (1,2,5,6,9,10),\quad (1,2,3,4,9,10),\quad (1,2,4,5,7,8),\\
& (1,2,3,4,5,6,7,8,10),\quad (1,3,4,5,6,7,8,10),\quad (1,2,3,4,5,6,7,10),\\
& (1,2,3,4,5,6,7,8,9),\quad (2,3,4,5,6,7,8,9),\quad (1,2,3,4,5,6,8,9).
\end{align*}

It turns out that the semistable subset $Z_{\chi}^{ss}$ is smooth. However, the direct approach to prove it would involve computing the singular locus of $Z$, and the complexity of this computation is too high. Thus we follow the idea presented in~\cite[Section 4.C]{DW} and look for monomials vanishing on the set of singular points of $Z$, coming from minors of the Jacobian matrix.
This leads to proving the smoothness of $Z_{\chi}^{ss}$ which, together with the freeness of the action of $T'$, implies the smoothness of $Z_{\chi}^{ss}/T'$.

The last step is to deal with the action of $K$ on the obtained GIT quotient and find if it produces any (quotient) singularities. This can be done directly by investigating the action of $K$ on $\CC^{12}$ and looking for non-trivial isotropy groups on each $(\CC^*)^{12}$ orbit intersecting $Z$ separately.

\begin{prop}
The terminalisation of $V/G$ for $G = \mathbb{Z}_2\times Q_8$ has six singular points of type $\frac{1}{2} (1,1,1,1)$.
\end{prop}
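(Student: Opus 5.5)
The plan is to complete the last step of the procedure in Section~\ref{sec:outline_method}. The computations above show that $Z_\chi^{ss}$ is smooth and that $T'$ acts freely on it, so $Z_\chi^{ss}/T'$ is smooth. Every singularity of $W=Z_\chi^{ss}/(T\times K)$ must therefore come from points of $Z_\chi^{ss}/T'$ with nontrivial isotropy under the residual finite group. This group is $K\simeq\ZZ_2^2$, possibly enlarged by torsion elements of $T$ that fix a $T'$-orbit only up to the identification $(-1,-1)$.

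First I will make the action of $K$ on $\CC^{12}$ explicit. By \eqref{eq:class_group_terminalisation} and Lemma~\ref{lem:Z2xQ8_data}, the torsion part of $\mathrm{Cl}(W)$ is $Ab(G/H)^\vee\simeq\ZZ_2^2$. Each generator $v_i$ has a torsion degree, namely the character of $G/H$ by which the underlying $[G,G]$-invariant transforms. I read these characters off the eigenvector basis $xz,\dots,xy+zw$. Each nontrivial $k\in K$ then acts diagonally by signs $\pm1$ on $v_1,\dots,v_{10}$ and trivially on $v_{11},v_{12}$.

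Next I will run through the 92 semistable $(\CC^*)^{12}$-orbits. For each orbit $O$ and each $k\in K\setminus\{1\}$, I ask whether $k$ preserves some $T'$-orbit in $O\cap Z$, i.e.\ whether there is $t\in T$ with $k\cdot t$ acting trivially on all nonvanishing coordinates of $O$. This is a sign/lattice computation: the weights of the nonvanishing $v_i$ (columns of $D$) together with the $K$-characters must admit a $t=(t_1,t_2)$ solving the resulting system of equations $\pm1=t^{D_i}$. The orbits with long index lists, which contain $v_7,\dots,v_{10}$ together with $v_1$ or $v_3$ and $v_2$ or $v_4$, should have trivial stabiliser, since their characters already separate the group elements. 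Nontrivial isotropy can only occur on the small orbits, such as $(4,6,7,8,9,10)$, $(3,5,7,8,9,10)$, $(1,2,3,6,7,8)$, $(1,2,5,6,9,10)$, $(1,2,3,4,9,10)$, $(1,2,4,5,7,8)$, and their extensions by $11,12$. For each such orbit I intersect with $Z$ using $I_{\mathcal R}$ to find the fixed points of the stabilising involution in $Z_\chi^{ss}/T'$. For terminality, I expect these fixed points to be isolated, hence finitely many points in the fibre over the origin of $V/G$.

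Finally I will identify the local type at each such point. The stabiliser is an involution acting on the smooth fourfold $Z_\chi^{ss}/T'$ with an isolated fixed point, and it preserves the symplectic form inherited from $V$. Its linearisation therefore has eigenvalues $(-1,-1,-1,-1)$, giving a singularity of type $\frac12(1,1,1,1)$. This matches the proposition above: by \cite{Anno}, terminal Gorenstein cyclic order-2 quotients are of this type, and any non-isolated fixed locus would contradict terminality of $W$. Counting the isolated fixed points over all orbits should give six. A useful consistency check is the global symmetry: the involution $x\leftrightarrow y$, $z\leftrightarrow w$ swaps $t_1$ and $t_2$ and permutes the orbits, so the points should come in symmetric families.

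The main obstacle is the orbit-by-orbit bookkeeping in the second step. I must correctly combine the torsion characters with the $T$-weights and not miss isotropy coming from elements $(k,t)$ with $t$ torsion in $T$. I also need to verify that each candidate orbit really meets $Z$ and that the fixed locus there is zero-dimensional. I would do this by computer, implementing the check in \cite{Singular} as in \cite{DW}.
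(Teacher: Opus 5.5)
Your plan follows the paper's proof. You write the $K$-action on $\CC^{12}$ as signs on $v_1,\dots,v_{10}$ and combine it with $T'$. Checking orbit by orbit for elements $(t,k)$ that act trivially on the nonvanishing coordinates, you single out the same six orbits $O_7,O_9,O_{10},O_{15},O_{16},O_{17}$. For each, the isotropy forces $x_{11}=x_{12}=0$, the orbit meets $Z$ in a single $T'$-orbit, and the involution acts by $-1$ on the four normal directions, giving a point of type $\frac{1}{2}(1,1,1,1)$.

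One small correction: $v_7,\dots,v_{10}$ transform under characters of $G$ that are nontrivial on $H$, so they do not have a ``$G/H$-character'' as you describe. Define the $K$-action as the paper does, by choosing representatives of $G/H$ in $G$. This is harmless, since $H$ acts on all twelve generators exactly as $(-1,1)\in T$.
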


\begin{proof}
The action of $T' \times K$ on $\CC^{12} \supset Z$ is given by the weight matrix $D$, and to describe the action of $K \simeq G/H\simeq \ZZ/2\ZZ$ we choose representatives of two elements in $G/H$ and look at their action on the generator of $\mathcal{R}(X)$. Thus, we consider the action on $\CC^{12}$ given by
\begin{align*}
(x_1,\ldots, x_{12}) &\mapsto\\ 
&((-1)^as_1^2x_1, (-1)^as_2^2x_2, (-1)^as_1s_2x_3, (-1)^bs_1s_2x_4, s_1s_2x_5, (-1)^{a+b}s_1s_2x_6,\\
&(-1)^{a+b}s_1^2x_7, (-1)^{a+b}s_2^2x_8, (-1)^bs_1^2x_9, (-1)^bs_2^2x_{10}, s_1^{-2}x_{11}, s_2^{-2}x_{12}),
\end{align*}
where $a,b\in\{0,1\}$ correspond to generators of $K$ and $s_1, s_2$ to coordinates of $T'$.

We already know that taking $a=b=0$ does not give non-trivial isotropy groups of the $T'\times K$-action. Consider the case $a=1, b = 0$ and check when a point of $Z^{ss}_{\chi}$ can have non-trivial isotropy. In this case, either $s_1s_2 = 1$ or $x_4 = x_5 = 0$. Assume first that $s_1s_2 = 1$. Then $x_3 = x_6 = 0$, and there are~4 such orbits: $O_{17} = (1,2,4,5,7,8)$ and its extensions by $11$ or $12$. On these orbits we have $s_1^2 = s_2^2 = -1$, hence $(s_1, s_2) \in \{(i,-i), (-i, i)\}$, which are equivalent up to $(-1,-1)$, i.e. they give the same point of $T'$. If such $(s_1, s_2)$ acts trivially, we need $x_{11} = x_{12} = 0$, so we obtain one orbit with  non-trivial isotropy group. One may check that the dimension of $O_{17} \cap Z$ is~2, hence its quotient by $T'\times K$ is a single point. Since the action of the isotropy group on the set of vanishing coordinates is the multiplication by $-1$, this is also the action on the tangent space to the quotient at $O_{17}/T'$, so the singularity type is $\frac{1}{2}(1,1,1,1)$.

If, in turn, $x_4 = x_5 = 0$ we see that those variables vanish simultaneously only on~4 orbits: $O_{10} = (1,2,3,6,7,8)$ and its extensions by $11$ or $12$. Then $-s_1^2 = -s_2^2 = -s_1s_2 = 1$, so $(s_1, s_2) \in \{ (i,i), (-i, -i)\}$, which are equivalent in $T'$. Again, if an orbit has this element in the isotropy group, then $x_{11}=x_{12} =0$, we obtain one such orbit, and one corresponding singular point of type $\frac{1}{2}(1,1,1,1)$ in the quotient.

The remaining cases ($a=1, b=0$ and $a=b=1$) are analogous and provide us with four more singular points of type $\frac{1}{2}(1,1,1,1)$ in the quotient, corresponding to orbits $O_7 = (4,6,7,8,9,10), O_9 = (3,5,7,8,9,10), O_{15} = (1,2,5,6,9,10), O_{16} = (1,2,3,4,9,10)$.
\end{proof}

\subsubsection{The group $D_{10}$}
While the terminalisation for the group $D_{10}$ has already been described in the literature (see Section~\ref{sec:terminalisation_D10}), it can be also constructed via Cox rings. Since the approach is very similar to the case of $\mathbb{Z}_2\times Q_8$ and the result already known, let us just summarize the results and mention the most interesting points.

The representation of $G \simeq D_{10}$ as in (\ref{eq:local_Dn}) has~5 conjugate symplectic reflections, generating~$G$. The remaining elements are in $[G,G] \simeq \mathbb{Z}_5$. The ring of $[G,G]$-invariants is generated by $15$ elements which, after homogenization and adding one more element corresponding to the junior class, generate the Cox ring of the terminalisation. It is worth noticing that, because there is just one junior class, the valuation lifting condition for generating the Cox ring is easier to check than in other cases.

Since we consider the action of $\mathbb{C}^*$, the movable cone and its chamber decomposition are simple: there is one chamber, hence one quotient. However, the ideal between the Cox rings generator is significantly bigger than for $\mathbb{Z}_2\times Q_8$. While the computation of semistable orbits and checking their stability work well (77 stable orbits), the direct smoothness check of the spectrum of the Cox ring does not finish in reasonable time. This is not surprising, as the same problem occured for smaller data for $\mathbb{Z}_2\times Q_8$, and one may use the same method again to determine singular orbits.

At the end, we encounter a single orbit consisting of singular points of the spectrum of the Cox ring, which gives one singular point in the quotient. Note that this time the singularity comes from the properties of the ideal, not from taking the quotient by a torsion part of the Picard quasitorus, which suggests that the singularity may be not a quotient one.

\subsection{Another approach to terminalise quotients by dihedral groups}\label{sec:terminalisation_D10}

Let $V$ be a vector space of dimension $2$ and $W_d$ denote the 2-dimensional representation of the dihedral group $D_{2d}$ generated by the matrices
\[\begin{bmatrix}
    0 & 1\\
    1 & 0
\end{bmatrix},\quad \begin{bmatrix}
    0 & \zeta\\
    \zeta^{-1} & 0
\end{bmatrix},\]
where $\zeta$ is a primitive $d$-root of unity. It is straightforward to check that the singularity $V\times V^{\ast}/W_d$ is the same as the one obtained in \eqref{eq:local_Dn}. 
It is shown in~\cite[Corollary 2.15]{BBFJLS} that the blow-up of the singular locus of the singularity $V\times V^{\ast}/W_d$ is locally analytically isomorphic to the Calogero-Moser space associated with $W_d$ at non-zero equal parameters \cite{Bonnafe2018}: this space admits an isolated singular point that is well described for small values of $d$. Moreover, note that this type of singularity has trivial local fundamental group \cite[Theorem 1.3]{BBFJLS}, so it is not a quotient singularity.

For $d=4$, i.e. for $D_8$, after the blow up of the singular locus the remaining isolated singularity is locally analytically isomorphic to the minimal nilpotent orbit closures of the Lie algebra $\mathfrak{sl}_3$.
This singularity is known to have a symplectic resolution with exceptional locus a $\mathbb P^2$ (indeed, this is the singularity occurring as the image of a small contraction on a \HK fourfold \cite{WW}).
This is coherent with the description of the minimal resolution of $\mathbb{C}^4/D_8$ described in~\cite[Proposition 5.22]{DG}, where it is proven that the exceptional locus above $0$ admits two surface components, one of them isomorphic to $\mathbb{P}^2$, the other to the Hirzebruch surface $\mathbb F_4$. 

In the case $d=5$, i.e. for $D_{10}$, after the blow up of the singular locus the remaining isolated singularity does not admit a symplectic resolution (if so, it would be a small symplectic contraction, but these are known in dimension $4$). By~\cite[Theorem 1.3]{BBFJLS}, this singularity is not locally analytically isomorphic to a singularity of a minimal orbit closure of a Lie algebra.
It is however, as described in~\cite[Section 12.3]{FJLS}, the nilpotent orbit closure of $A_4 +A_3$ along the codimension $4$ boundary $A_4 +A_2 +A_1$ in $E_8$ (following the same notation from Loc. Cit., where an explicit description is given).
Moreover, it follows from~\cite{Bel16} that the singularity is $\QQ$-factorial terminal (see~\cite[Section 5.2]{BBFJLS}).

\section{Proof of the main theorem \ref{main}} \label{sec:proof_of_main}

The $\QQ$-factorial terminalisation $\tilde X$ of $X/L_3(4)$ is an ISV with simply connected smooth locus by~\cite[Proposition 1.1]{Menet33}, as $L_3(4)$ is generated by involutions.

To compute $b_2(\tilde X)$ we apply \cite[Theorem 1.6]{BGMM}. The singular locus of $X/L_3(4)$ has only one component in codimension $2$, a surface $S$, and the general point of this surface is an ordinary double point for $X/L_3(4)$; indeed there is one conjugacy class of involutions, and no order three elements fixing a surface (see the Table at he end). Thus we have $b_2(\tilde X)=\mathrm{rk} H^2(X,\ZZ)^{L_3(4)}+1=4$. 

In order to find the singularities of $\tilde X$ we use Theorem \ref{main2}, and the results about $\QQ$-factorial terminalisations from Section \ref{sec:terminalisations}. Indeed, some of the singularities of $X/L_3(4)$ are already terminal (see Proposition \ref{prop:isolated_sings}), while some lie on the surface $S$ (see Proposition \ref{prop:non_isolated_sings}). Thanks to the local nature of the $\QQ$-factorial terminalisation \cite[Section 3]{BCHM2010}, we can study separately each singularity appearing at special points of $S$. After the terminalisation process, we are thus left with new isolated singularites: the singularities $\CC^4/S_3$ and $\CC^4/D_8$ can be resolved, but from points locally of the form $\CC^4/\ZZ_4, \CC^4/Q_8\times\ZZ_2$ we obtain 16 points of the form $\frac{1}{2}(1,1,1,1)$, while from points locally of the form $\CC^4/D_{10}$ we obtain two nonquotient singularities (see Section \ref{sec:terminalisation_D10}). 

\appendix
\section{The fixed K3 surface}\label{Ap:K3}
Let $F \subset X$ be one of the K3 surfaces fixed on $X$ by an involution of $L_3(4)$. Then, $F$ is the double cover of a quadric surface in $\mathbb P^3$ given by a (not very ample) polarization of degree 4; moreover, it admits a symplectic action of $Q_8\ast Q_8$ (that is the normaliser group of any involution in $L_3(4)$), and is a generalised Nikulin surface. The latter are a deformation family of dimension 13 of K3 surfaces, whose Néron-Severi lattice admits a primitive embedding of the lattice $E_7(-2)$: any K3 surface fixed by a symplectic involution on a K3$^{[2]}$-type manifold belongs to this family. We refer the reader to the upcoming paper \cite{newpaper} for more details about generalized Nikulin surfaces.
\begin{prop}\label{prop:fixedK3}
    The surface $F$ is isomorphic to the unique K3 surface with transcendental lattice $T(F)=\begin{bmatrix}
    20 &8\\8&20
\end{bmatrix}$.
\end{prop}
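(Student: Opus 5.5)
The plan is to pin down $T(F)$ as a rank-two positive definite even lattice, and then use that K3 surfaces with $\rho=20$ are determined up to isomorphism by their transcendental lattice (Shioda--Inose). Since $F$ carries a symplectic action of $Q_8\ast Q_8$, the coinvariant lattice $H^2(F,\ZZ)_{Q_8\ast Q_8}$ has rank $19$; this group appears in Xiao's and Hashimoto's lists. Together with the invariant class of degree $4$ inducing $F\to Q$, this shows that $\mathrm{NS}(F)$ has rank at least $20$. For the complementary bound I will use that $F$ moves in a family when $X$ is deformed inside the one-parameter family of $L_3(4)$-manifolds; this forces $\rho(F)=20$ for our rigid $X$. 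So $T(F)$ is positive definite of rank $2$, and it remains to identify it.

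To compute $T(F)$, I will relate it to $T(X)=\begin{bmatrix}10&4\\4&10\end{bmatrix}$. Let $\iota$ be an involution of $L_3(4)$ with fixed K3 surface $F\subset X$. Restriction of the symplectic form shows that $H^{2,0}(X)$ restricts isomorphically to $H^{2,0}(F)$, so the restriction map $r\colon H^2(X,\ZZ)\to H^2(F,\ZZ)$ sends $T(X)$ into $T(F)\otimes\QQ$. I will use the known description of the restriction for a symplectic involution on a K3$^{[2]}$-type manifold, with everything deformed to the natural model $\Sigma^{[2]}$. Here $F$ is the closure of $\{[x,\iota x]\}$, which is birational to $\Sigma/\iota$ resolved, i.e.\ a Nikulin-type construction. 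Pulling classes from $\Sigma$ to $F$ multiplies the form by $2$ on the $\iota$-invariant part. This is exactly the source of the factor $2$ in $T(F)=T(X)(2)$, since $\begin{bmatrix}20&8\\8&20\end{bmatrix}=2\cdot\begin{bmatrix}10&4\\4&10\end{bmatrix}$. Concretely, I will show that $r$ restricted to $H^2(X,\ZZ)^{\iota}$, on the transcendental part, multiplies the form by $2$, and that its image is primitive in $H^2(F,\ZZ)$. Primitivity is the step that excludes an overlattice such as one with a $\tfrac12$-gluing.

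The main obstacle is this primitivity and saturation statement. A priori $T(F)$ could be a proper overlattice of $r(T(X))$ of index $2$; for instance $\begin{bmatrix}20&8\\8&20\end{bmatrix}$ has even index-$2$ overlattices such as $\begin{bmatrix}5&2\\2&5\end{bmatrix}$-type ones, which are odd, or $\begin{bmatrix}20&18\\18&20\end{bmatrix}/4$-like ones. I would first rule out the odd ones by evenness and check the few even candidates by hand. Then I would exclude the remaining candidates using the $E_7(-2)\subset\mathrm{NS}(F)$ structure of generalised Nikulin surfaces, together with the orthogonal complement of the $Q_8\ast Q_8$ coinvariant lattice, whose discriminant group must be compatible with $T(F)$. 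If this lattice bookkeeping is inconclusive, my fallback is computational. I would compute explicitly in the model of Appendix \ref{Ap:codes} the quadric $Q_k$ and the branch curve of $F\to Q_k$. Then I would determine the discriminant of $\mathrm{NS}(F)$ from enough explicit curves, for example the $(-2)$-curves over the singular points and the rational curves over the $\alpha$-invariant lines, and match it with $\det T(F)=336$.
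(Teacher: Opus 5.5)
\textbf{Verdict: genuine gap.} The missing piece is the step you yourself flag as the main obstacle: primitivity of $r(T(X))$ in $H^2(F,\ZZ)$. Nothing in the proposal settles it.

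\textbf{The single competitor.} By push--pull, $2T(F)\subseteq r(T(X))\subseteq T(F)$. Writing $T(X)=\langle e_1,e_2\rangle$, the only possible enlargements come from adjoining $\tfrac12 r(x)$ with $x\in\{e_1,e_2,e_1+e_2\}$. Since $(\tfrac12 r(e_i))^2=5$ is odd, and the index-$4$ overlattice $\tfrac12 r(T(X))\cong\begin{bmatrix}5&2\\2&5\end{bmatrix}$ is also odd, there is exactly one competitor: $\langle r(T(X)),\tfrac12 r(e_1+e_2)\rangle\cong\langle 6\rangle\oplus\langle 14\rangle$, of determinant $84$ instead of $336$. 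Your list of candidates is garbled: $\begin{bmatrix}5&2\\2&5\end{bmatrix}$ has index $4$, not $2$, and $\tfrac14\begin{bmatrix}20&18\\18&20\end{bmatrix}$ is not integral.

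\textbf{Why your tools do not decide it.} Both candidates are even binary forms with discriminant group of length $2$. You give no reason why $E_7(-2)\subset\mathrm{NS}(F)$ or the $Q_8\ast Q_8$-coinvariant lattice would be incompatible with either one. Explicit curves on $F$ only give a finite-index sublattice of $\mathrm{NS}(F)$. Certifying $\det T(F)=336$ would then require proving saturation, which is the same $2$-divisibility question in disguise.

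\textbf{The missing idea.} The paper closes the argument by moving the question back into $H^2(X,\ZZ)$, where the lattices are explicit. On $H^2(X,\ZZ)^{\iota}=U^{\oplus 3}\oplus\langle -2\rangle\oplus E_8(-2)$, the map (your $r$, or the push-forward to the Nikulin orbifold $N$ used in the paper) scales the form by $2$. It sends $U^{\oplus 3}\oplus\langle-2\rangle$ isomorphically onto $U(2)^{\oplus 3}\oplus\langle -4\rangle$, and the invariant $E_8(-2)$ onto $2E_8(-1)$. Here $U(2)^{\oplus 3}\oplus\langle-4\rangle\oplus E_8(-1)$ is primitive in $H^2(F,\ZZ)$.

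Hence some $x\in T(X)\setminus 2T(X)$ has $r(x)\in 2H^2(F,\ZZ)$ if and only if $(x+\omega)/2\in H^2(X,\ZZ)$ for some $\omega$ in the anti-invariant $E_8(-2)$. Equivalently, $T(F)\cong T(X)(2)$ if and only if $T(X)\oplus E_8(-2)$ is primitive in $H^2(X,\ZZ)$. The paper checks this by computer. It places the anti-invariant $E_8(-2)$ of one involution inside the $L_3(4)$-coinvariant lattice and verifies that $E_8(-2)\oplus I$ is primitive, where $I\supset T(X)$ is the invariant lattice. The paper routes the factor $2$ through $N$ and the cited identity $T(N)\cong T(F)$. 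Your restriction map is a legitimate and more direct substitute for that citation.

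\textbf{Two smaller points.}
\begin{itemize}
\item $Q_8\ast Q_8\cong 2^{1+4}_+$ has $19$ involutions and $12$ elements of order $4$. So $\mathrm{rk}\,H^*(F,\QQ)^{G}=\tfrac1{32}(24+19\cdot 8+12\cdot 4)=7$, and the coinvariant lattice has rank $17$, not $19$. This only gives $\rho(F)\ge 18$.
\item The ``complementary bound'' is vacuous, since $\rho\le 20$ always holds, and the family argument gives no lower bound.
\end{itemize}
Still, $\rho(F)=20$ follows from your own Hodge argument. Since $\sigma_X|_F\neq 0$ and $T(X)_\QQ$ is an irreducible Hodge structure, $r$ maps $T(X)_\QQ$ isomorphically onto $T(F)_\QQ$.
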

\begin{proof}
In~\cite{newpaper} it is proven that given a symplectic involution $\iota$ on a K3$^{[2]}$-type manifold $X$, if $F$ is the K3 surface fixed by $\iota$ and $N$ is the Nikulin orbifold obtained as terminalisation of $X/\iota$, then it holds $T(N)\simeq T(F)$. Therefore, since in our case $X$ is of maximal Picard rank, also $F$ is; thus, $F$ is also unique up to isomorphisms.

Recall that the invariant sublattice of $H^2(X,\mathbb Z)$ for the action of $\iota$ is isomorphic to $E_8(-2)\oplus U^{\oplus 3}\oplus \langle -2\rangle$, while the anti-invariant lattice (its orthogonal complement) is isomorphic to $E_8(-2)$; recall also that, since $H^2(X,\mathbb Z)$ is unimodular, up to a choice of marking there are integer elements of the form $(\varepsilon+\omega)/2$, with $\varepsilon\in E_8(-2)$ invariant, and $\omega\in E_8(-2)$ anti-invariant. Any invariant element $x\in H^2(X,\ZZ)$ can be written as $x=a\varepsilon+b\upsilon+c\delta$, with $\varepsilon\in E_8(-2),\ \upsilon\in U^{\oplus 3}$ and $\delta$ the generator of $\langle -2\rangle$.

The rational quotient map $\pi\colon X\to N$ induces a map $H^2(X,\mathbb Z)\rightarrow H^2(N,\mathbb Z)$, and by push-pull we can compute $(\pi_*x)^2=2x^2$; however, for any invariant $\varepsilon\in E_8(-2)$, its image $\pi_*\varepsilon\in H^2(N,\mathbb Z)$ is not primitive: it can be divided by 2, and $(\pi_*\varepsilon/2)^2=\varepsilon^2/2$. 

Now, since $\iota$ is symplectic, the transcendental lattice $T(X)$ is invariant; moreover, $T(N)$ is primitive by definition. Therefore, in order to obtain $T(N)=T(X)(2)$ we prove that, in our case, for any $x\in T(X)$ it holds $a=0$. 

But $a\neq 0$ if and only if there exists an anti-invariant element $\omega\in E_8(-2)$ such that $(x+\omega)/2$ is integer in $H^2(X,\mathbb Z)$, and we can verify this condition by computer (using~\cite{Sage}). Indeed, denote $I$, $\Omega$ respectively the invariant and co-invariant lattice for the action of $L_3(4)$ on $H^2(X,\ZZ)$: we fix the embeddings $E_8(-2)\hookrightarrow\Omega\hookrightarrow H^2(X,\ZZ)$ in such a way that $E_8(-2)$ is the anti-invariant lattice for one of the involutions in $L_3(4)$, and $I=\Omega^\perp$ is isomorphic to the lattice in \eqref{eq:TX}; then, we prove that $E_8(-2)\oplus I$ is primitive in $H^2(X,\ZZ)$ to conclude the proof.
\end{proof}

\section{Other Groups}\label{Ap:groups}

In this section, we focus on other large finite groups admitting a symplectic action on K3$^{[2]}$-type fourfolds:
$A_7$, $L_2(11)$, $M_{10}$, $\mathbb Z_2\times L_2(7)$ and $\mathbb{Z}_2^4:S_5$. In all these cases, the action on the second integral cohomology lattice has co-invariant lattice of rank 20, and the choice of an invariant polarisation of Beauville-Bogomolov degree 2 gives a rigid projective model. 
\begin{rem}\label{rem:proj_models_others}\textit{Projective models.}
We remark that the action of $L_2(11)$ considered in~\cite{Mazzon} is different from the one we consider: indeed, while they share the same co-invariant lattice, the invariant lattice that contains a class of square 2 (giving a double EPW model) and the one that contain a class of square 6 and divisibility 2 (giving a Fano model) are not the same \cite{Wawak}. For the group $G=A_7$ two non-isomorphic actions on double EPW models exist \cite{BilliWawak}, again distinguished by their invariant lattices; one of these two actions also admits a Fano model, that is the one considered in~\cite{Mazzon}.

For $\mathbb Z_2\times L_2(7)$ and $\mathbb{Z}_2^4:S_5$ the double EPW model is degenerate, and $\pi\colon X\to Y$ is a 6:1 cover of quadric hypersurface: these examples are birational to Hilbert squares of quartic K3 surfaces $S$ with no lines, and the symplectic action on $S^{[2]}$ of $G$ is generated with a contribution of Beauville's involution.
\end{rem}
 
\begin{thm}\label{main1}
Let $G$ be one of the groups $A_7$, $L_2(11)$, $\mathbb Z_2\times L_2(7)$, $\mathbb{Z}_2^4:S_5$ and $M_{10}$. Let $X$ be a K3$^{[2]}$-type manifold admitting a symplectic action of $G$ and an invariant polarisation of degree 2. Let $\widetilde{X/G}$ be the terminalisation of the quotient $X/G$. Then:
    \begin{enumerate}
        \item if $G\in\{A_7, L_2(11)\}$, then $\widetilde{X/G}$ is an ISV with simply connected smooth locus, non-quotient singularities and $b_2(\widetilde{X/G})=4$;
        \item if $G=M_{10}$, then $\widetilde{X/G}$ is a ISV with $\pi_1(\widetilde{X/G}^{sm})=\mathbb Z_2$ and $b_2(\widetilde{X/G})=4$.
        \item if $G=\mathbb Z_2\times A_2(7)$, then $\widetilde{X/G}$ is an ISV with simply connected smooth locus, and $b_2(\widetilde{X/G})=6$;
        \item if $G=\mathbb{Z}_2^4:S_5$, then $\widetilde{X/G}$ is an ISV with simply connected smooth locus, and $b_2(\widetilde{X/G})=8$.

    \end{enumerate}
\end{thm}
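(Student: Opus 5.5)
The plan is to run, group by group, the same pipeline that proved Theorem \ref{main}. There are three ingredients: irreducible symplecticity with control of $\pi_1$ of the smooth locus, the computation of $b_2$, and, for (1), the detection of a non-quotient singularity.

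\textbf{Irreducible symplecticity and the fundamental group.} For every $G$ in the list, $\widetilde{X/G}$ is an ISV by \cite[Proposition 1.1]{Menet33}. The same result identifies $\pi_1(\widetilde{X/G}^{sm})$ with $G/N$, where $N$ is the subgroup generated by elements fixing a surface, i.e. the symplectic involutions (by the junior-element proposition of Section \ref{junior} these are the only junior elements, since no element of order $3$ fixes a surface, as checked on $Y$ via Appendix \ref{Ap:fix}). The groups $A_7$, $L_2(11)$, $\mathbb Z_2\times L_2(7)$ and $\mathbb Z_2^4:S_5$ are generated by their involutions. For the factor $\mathbb Z_2$ of $\mathbb Z_2\times L_2(7)$ I will check that its generator itself acts as a symplectic involution, or is a product of such. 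For $M_{10}=A_6\cdot 2$, the involutions all lie in $A_6$, which is simple and hence generated by them, so $N=A_6$ and $\pi_1=\mathbb Z_2$. These are GAP checks on the concrete matrix groups acting on $\PP^5$.

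\textbf{Second Betti number.} I will apply \cite[Theorem 1.6]{BGMM}, as in Section \ref{sec:proof_of_main}: $b_2(\widetilde{X/G})=\mathrm{rk}\,H^2(X,\ZZ)^G+c$, where $c$ is the number of codimension-$2$ components of $\mathrm{Sing}(X/G)$ (each generically $A_1$). Since the coinvariant lattice has rank $20$, the invariant lattice has rank $3$. It remains to count the $G$-conjugacy classes of symplectic involutions; each fixes a K3 surface by Remark \ref{fixed_locus_involution}, and the normaliser does not identify it further. From GAP I expect one class for $A_7$, $L_2(11)$ and $M_{10}$, giving $b_2=4$. For $\mathbb Z_2\times L_2(7)$ I expect three classes, namely the central involution, the involutions of $L_2(7)$, and their products, giving $6$. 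For $\mathbb Z_2^4:S_5$ I expect five classes, giving $8$. I also need that no element of order $3$ (or higher) fixes a surface, which I will verify from the invariant subspaces intersected with $Y$ in Appendix \ref{Ap:fix}.

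\textbf{Non-quotient singularities for $A_7$ and $L_2(11)$.} Both groups contain $D_{10}$. I will repeat the analysis of Section \ref{sec:D10}: locate the $\PP^1_g$ invariant under an element of order $5$, intersect it with the invariant EPW sextic, and find a singular point $s$ of $Y$ lying on all five quadrics $Q_k$. Its preimage is then a point of $X$ fixed by $D_{10}$, with local action \eqref{eq:local_Dn}. Next I must show that its stabiliser in $G$ is exactly $D_{10}$. Otherwise the local model changes, for example to $A_5$ in $A_7$; in that case I would instead look for another $D_{10}$-fixed orbit, or argue that a larger stabiliser still produces a non-quotient singularity. Given exactly $D_{10}$, Section \ref{sec:terminalisation_D10} yields the singularity $(\mathcal Z(5),x_5)$, whose local fundamental group is trivial, so it is not a quotient singularity.

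\textbf{Main obstacle.} The hardest step is this stabiliser computation on the explicit invariant EPW sextics (Appendix \ref{Ap:codes}), together with the lift to $X$ via Remark \ref{rem:preimages_are_fixed_by_iota}. It is where the proof is genuinely computational, and a larger stabiliser would change the local singularity type.
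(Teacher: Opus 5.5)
Your architecture matches the paper's. It has three parts: the ISV property with $\pi_1(\widetilde{X/G}^{sm})\cong G/N$, the formula $b_2=3+N_2$ with $N_2$ the number of conjugacy classes of involutions, and, for (1), a singular point of $Y$ fixed by $D_{10}$ followed by \cite{BBFJLS}. For $\pi_1$ the paper cites \cite[Proposition 8.1]{BGMM} rather than \cite{Menet33}.

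The stabiliser question you single out as the main obstacle is settled by reading the tables of Appendix \ref{Ap:fix}. For $A_7$ and $L_2(11)$, no subgroup strictly containing $D_{10}$ has fixed points on $Y$. Over $y\in Y^{sing}$ the fibre of $\pi$ is a single point, so the stabiliser on $X$ equals the one on $Y$. The central $\ZZ_2$ of $\ZZ_2\times L_2(7)$ needs no separate check, since all of $G$ acts symplectically.

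The step that fails as you set it up is in parts (3) and (4). The formula $b_2=3+N_2$ requires that involutions are the only elements fixing a surface. The dangerous case is order $3$: a non-natural symplectic automorphism of order $3$ fixes an abelian surface. That surface would add a further codimension-$2$ component of $\mathrm{Sing}(X/G)$, with transversal $A_2$ singularities, and would raise $b_2$.

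You plan to exclude this by intersecting invariant subspaces with $Y$, using Appendix \ref{Ap:fix}. For $\ZZ_2\times L_2(7)$ and $\ZZ_2^4:S_5$ that tool is not available, because the degree-$2$ model is degenerate. By Remark \ref{rem:proj_models_others}, the polarisation gives a $6:1$ map onto a quadric, and $X$ is birational to $S^{[2]}$ for a quartic $S$ without lines. Consequently there is no double EPW sextic, Remark \ref{fixed_locus_involution} does not apply, and the appendix contains no tables for these two groups.

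The paper argues through naturality instead. For these two groups, $G$ is generated by automorphisms induced from $S$ together with Beauville's involution $\beta$. The involution $\beta$ commutes with every induced automorphism $g^{[2]}$ and is not itself induced. Hence, for odd $n$, $(\beta g^{[2]})^n=\beta\,(g^n)^{[2]}\neq 1$, so every element involving $\beta$ has even order. It follows that every element of order $3$ is natural and fixes only isolated points, which is the input your $b_2$ count needs in these two cases.
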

\begin{proof}
The computation of $b_2(\tilde X)$ follows from~\cite[Proposition 6.1]{BGMM} similarly to the proof of Theorem \ref{main}, that is
\begin{align*}
    b_2(\tilde X) = \text{rk}({H}^2(X, \mathbb{Z})) + N_2=3+N_2,
\end{align*}
where $N_2$ is the number of conjugacy classes of involutions in $G$: indeed, in all the cases involutions are the only morphisms with fixed locus in codimension 2. We can verify this directly on the double EPW model for cases $G =A_7, L_2(11)$ and $M_{10}$ (see Appendix \ref{Ap:fix}), while for cases $G = \mathbb Z_2\times L_2(7)$ and $\mathbb{Z}_2^4:S_5$ it follows from the fact all order 3 automorphisms are natural: indeed, since for the latter two actions the projective models are birational to Hilbert squares of K3 surfaces with Beauville's involution, elements of order 3 in $G$ are necessarily natural, and therefore fix points on $X$. The computation of $\pi_1(\tilde X^{sm})$ follows from~\cite[Proposition 8.1]{BGMM}, that is
\begin{align*}
    \pi_1(\tilde X^{sm}) \cong G/N
\end{align*}
where $N$ is the subgroup of $G$ generated by elements with codimension 2 fixed loci, and from the fact that all the groups we consider (except for $M_{10}$) are generated by involutions. For $M_{10}$, the subgroup generated by involution has index 2 (it is isomorphic to $A_6$).

Since the actions of $G\in\{A_7,L_2(11)\}$ on $\PP^5$ admit $D_{10}$ as subgroup fixing singular points on the EPW sextic $Y\subset\PP^5$ (see Appendix \ref{Ap:fix}), by~\cite{BBFJLS}  we can conclude that the ISV obtained as a terminalisation of $X/G$ will have non-quotient singularities (see also Section \ref{sec:terminalisation_D10}): indeed, the stabiliser of $y\in Y^{sing}$ is the same as that of $\pi^{-1}(y)=x\in X$.
\end{proof}

Natural problems arise.

\begin{prob}
    Are the ISVs obtained in Theorem \ref{main1} from the terminalisation of $X/A_7$ and $X/L_3(4)$ deformation equivalent?
\end{prob}
We remark that, while $A_7$ contains elements of order 6, $\ZZ_6$ does not fix isolated points on the double EPW sextic $X$ (all its fixed points lie on K3 surfaces fixed by involutions). However, the action of $A_7$ on the EPW sextic $Y$ produces a singular point $q$ fixed by the group $H$ with GAP Id (24,8); $\pi^{-1}q$ is a point, also fixed by $H$. This probably distinguishes this case from the $L_3(4)$ case, but we did not compute the terminalisation of $\CC^4/H$; we remark that $H$ is generated by junior elements, it contains $D_{12}$ as normal subgroup, and $\CC^4/D_{12}$ also has a non-quotient terminalisation \cite{BBFJLS}.

\begin{prob}
    Consider the two ISVs obtained from the terminalisation of $X/L_2(11)$, namely $Z_1$ when $X$ is a double EPW sextic as in Theorem \ref{main1}, and $Z_2$ when $X$ is a Fano variety of lines of a cubic fourfold as in~\cite{Mazzon}. Are $Z_1$ and $Z_2$ deformation equivalent?
    The same question applies to the two ISVs obtained from the group $M_{10}$, and the three obtained from the group $A_7$ (two from double EPWs, one from a Fano).
\end{prob}
For $L_2(11)$ and $M_{10}$, since the two models are obtained from actions with different invariant lattices, we expect a negative answer. On the other hand, for $A_7$ the two double EPW models (obtained from two different invariant lattices) behave remarkably alike in terms of points with non-trivial stabiliser, and the Fano model belongs to the same deformation family as one of the two double EPW models: therefore, in this case we expect a positive answer. 

\begin{prob}
    What is the terminalisation of the singularity $\CC^4/A_5$?
\end{prob}
The EPW sextic $Y$ invariant for an action of $M_{10}$ has a singular point fixed by the group $A_5$: therefore, this group will fix also its preimage on the double EPW sextic $X\rightarrow Y$. This is the biggest group action fixing points on a double EPW sextic we found in our analysis. One might be interested in studying the (hopefully rather pathological) corresponding residual singularity, appearing in the terminalisation of $X/M_{10}$.

\begin{prob}
    Perform an analogous analysis in the case of sixfolds.
\end{prob}
In fact, in~\cite{BMW} a list of symplectic actions on EPW cubes is provided. This gives rise to a list of examples of ISV sixfolds that can be studied with the methods from this paper. 

\section{Codes}\label{Ap:codes}
In this section we present the code to construct the EPW sextic $Y\subset\PP^5$ symmetric with respect to the action of $L_3(4)$. The projective representation of $L_3(4)$ comes from~\cite{ATLAS}.

\subsection{The symmetric Lagrangian subspace}
 We start with the computation of a Lagrangian subspace $A\subset\bigwedge^3\CC^6$ symmetric for the action of $L_3(4)$, using~\cite{GAP}.
\begin{lstlisting}[language=GAP]
LoadPackage("orb");
InducedMapOnWedge := function(Mat)
#computes the matrix of the linear map induced by the matrix Mat on 
#the third Exterior Power of the underlying space in the basis of 
#lexographically ordered simple vectors 
#obtainted through multiplication of the canonical basis
	#...
end;;

CheckLagrangianWedge3_6 := function(L) 
#checks if a subspace of Wedge^3V_6 is a Lagrangian space
	#...
end;;

OrbitSpace := function(vec, Gr, F)
#returns the space spanned by the orbit of 
#the vector vec by the group Gr over the field F
	#...
end;;


b := E(7)+E(7)^2+E(7)^4; B := -1-b;  # b7, b7**
Fie := CF(420);
#L_3(4) generators per [ATLAS]
g1 := [
[-1,0,0,0,0,0],
[0,0,1,0,0,0],
[0,1,0,0,0,0],
[0,0,0,0,1,0],
[0,0,0,1,0,0],
[0,0,0,0,0,-1]];
g2 := [
[0,1,0,0,0,0],
[0,-1,0,0,1,E(3)+1],
[0,0,-1,0,0,0],
[0,0,-E(3)-1,0,0,1],
[-1,-1,0,E(3)+1,0,0],
[0,0,0,-1,0,0]];
6L3_4 = Group(g1, g2);



#######LAGRANGIAN SPACES#######		
L3_4_20 := Group([InducedMapOnWedge(g1), InducedMapOnWedge(g2)]);
CC := ConjugacyClasses(L3_4_20);
ImportantCC := [];
for C in CC do
	if Trace(Representative(C)) > 0 then
		Add(ImportantCC, C);
	fi;
od;
flag := true;
for ind in [1..Length(ImportantCC)] do
	if Trace(Representative(ImportantCC[ind])) = 1 then
		if flag then
			ind1 := ind;
			flag := false;
		else
			ind2 := ind;
		fi;
	fi;
od;

e1 := CanonicalBasis(Rationals^20)[1];
#projector onto subspace invariant under the a subreprepresentation
P1 := 0 * IdentityMat(20); 
for ind in [1..Length(ImportantCC)] do
	Paux := 0 * IdentityMat(20);
	for g in ImportantCC[ind] do
		Paux := Paux + g;
	od;
	if ind = ind1 then
		P1 := P1 - b * Paux;
	elif ind = ind2 then
		P1 := P1 - B * Paux;
	else 
		P1 := P1 + Trace(Representative(ImportantCC[ind]))/2 * Paux;
	fi;
od;
v1 := e1 * P1;;
V1 := OrbitSpace(v1, L3_4_20, Fie);;
CheckLagrangianWedge3_6(V1);
Basis1 := Basis(V1);;
\end{lstlisting}

\subsection{The EPW sextic and the action of $L_3(4)$ on $\PP^5$}
What follows is \cite{M2} code that constructs $Y\subset\PP^5$ using the Lagrangian space from above.

\begin{lstlisting}[language=Macaulay2]
--computes the symmetrix matrix for the Lagrangian in appropriate basis
symmetricMatrix = BasisOfLagrangian -> ( 
--BasisOfLagrangian given as a list of lists
	BasisOfDomain = {};
	BasisOfCodomain = {};
	for v in BasisOfLagrangian do (
		v1 = take (v, {0,9}); --the first 10 coordinates
		v2t = take (v, {10,19}); --the last 10 coordinates
		--reordering the last 10 coordinates to get it in the basis 
    --dual to the one in which v1 is written 
		mult = {1,-1,1,-1,1,-1,1,1,-1,1};
		v2 = {};
		for i from 0 to 9 do (
			v2 = append(v2, mult_i * v2t_(9-i));
		);
			
		--resulting halves of vectors added to the bases
    --of the domain/codomain space
		BasisOfDomain = append(BasisOfDomain, v1);
		BasisOfCodomain = append(BasisOfCodomain, v2);	
	);
		
	--change of bases to the canonical ones
    M1 = inverse transpose matrix BasisOfDomain;
	M2 = transpose matrix BasisOfCodomain;
	M2 * M1
);

p = 65521; 
F = ZZ/p;
R = F[t];
P = F[x,y,z,u,v,w];
t = -18153_F; --root of unity of order 420
ww = t^20;

Basis1 = --basis obtained from the GAP code above
Mat1 = symmetricMatrix(Basis1);
M=matrix{
	{ 0, 0,0,0,0,0,0,w,-v,u},
	{0,0,0,0,0,-w,v,0,0,-z},
	{ 0,0,0,0,w,0,-u,0,z,0},
	{0,0,0,0,-v,u,0,-z,0,0},
	{ 0,0,0,0,0,0,0,0,0,y},
	{ 0,0,0,0,0,0,0,0,-y,0},
	{ 0,0,0,0,0,0,0,y,0,0},
	{ 0,0,0,0,0,0,0,0,0,0},
	{ 0,0,0,0,0,0,0,0,0,0},
	{ 0,0,0,0,0,0,0,0,0,0}
};
MM = M + transpose ( M );
Lambda1 = Mat1-MM;	
	
d1 = det(Lambda1); --the equation for the EPW
I1 = ideal d1;
degree(I1) --returns 6
s1 = ideal singularLocus I1; --singular locus of the EPW
dim(s1) --returns 3, so the projective dim is 2
degree(s1) --returns 40
I1h = homogenize(I1,x) --ideal of EPW sextic Y
S1h = ideal singularLocus I1h

--action of L_3(4):
--involution
g1 = 1_F * (matrix {
{-1,0,0,0,0,0},
{0,0,1,0,0,0},
{0,1,0,0,0,0},
{0,0,0,0,1,0},
{0,0,0,1,0,0},
{0,0,0,0,0,-1}});
--order 5 generator
g2 = 1_F * (matrix {
{0,1,0,0,0,0},
{0,-1,0,0,1,ww^7+1},
{0,0,-1,0,0,0},
{0,0,-ww^7-1,0,0,1},
{-1,-1,0,ww^7+1,0,0},
{0,0,0,-1,0,0}});
g1 = transpose g1
g2 = transpose g2
mapg1 = map(P, P, g1)
mapg2 = map(P, P, g2)
\end{lstlisting}

\section{Fixed points for maximal group actions on $\PP^5$}\label{Ap:fix}
We collect here tables about the action of a group $G\in\{L_3(4), A_7, L_2(11), M_{10}\}$ on a $G$-invariant EPW sextic $Y\subset\PP^5$.

For each $G$ we list the conjugacy classes of subgroups that fix some non-empty subset of $Y$: we remark that there may be subgroups of $G$ such that none of the projective subspaces invariant for their action on $\PP^5$ intersect $Y$; these subgroups are therefore excluded from the tables.

For each subgroup, the column ``Fix $\PP^5$'' collects the dimensions of the invariant subspaces for its action on $\PP^5$; the columns ``Fix $Y$'',``Fix $Y^{sing}$'' describe the intersection of said subspaces with $Y$ and its singular locus, and here $C_n,S_n$ denote a curve (resp. a surface) of degree $n$.\\

\footnotesize
%\begin{center}
\begin{table}[!h]
\begin{tabular}{|c|c|c|c|c|c|}
        \hline \hline
        \# & Group & Gap ID & Fix $\mathbb{P}^5$ & Fix $Y$ & Fix $Y^{sing}$\\ \hline \hline 
    1 & $1$ &  1, 1  & 5 & Y & $Y^{sing}$ \\ \hline
    2 & $\mathbb{Z}_2$ &  2, 1  & 3 1 & $S_2 \cup S_4\cup \text{6 pt}$ & $C_8$\\ \hline
    3 & $\mathbb{Z}_3$ &  3, 1  & 1 1 1 & 15 pt & 3 pt \\ \hline
    4 & $\mathbb{Z}_2 \times \mathbb{Z}_2$ &  4, 2  & 2 0 0 0 & $3 C_2$ & 8 pt\\ \hline
    5 & $\mathbb{Z}_5$ &  5, 1  & 1 0 0 0 0 & 8 pt & 2 pt \\ \hline
    6 & $S_3$ &  6, 1  & 1 & 5 pt & 1 pt \\ \hline
    7 & $S_3$ &  6, 1  & 1 & 5 pt & 1 pt\\ \hline
    8 & $\mathbb{Z}_6$ &  6, 2  & 1 0 0 0 0 & 7 pt & 3 pt\\ \hline
    9 & $D_{10}$ &  10, 1  & 1 & 4 pt & 2 pt \\ \hline
    10 & $\mathbb{Z}_{11}$ &  11, 1  & 0 0 0 0 0 0 & 5 pt & 5 pt\\ \hline
    %11 & $A_4$ &  12, 3  & 0 0 0 & - & -\\ \hline
    11 & $D_{12}$ &  12, 4  & 1 & 5 pt & 1pt\\ \hline
   % 13 & $\mathbb{Z}_{11} : \mathbb{Z}_5$ &  55, 1  & 0 & - & - \\ \hline
  %  14 & $A_5$ &  60, 5  & 0 & - & -\\ \hline
 %   15 & $A_5$ &  60, 5  & 0 & - & -\\ \hline
%    16 & $L_2(11)$ &  660, 13  & 0 & - & - \\ \hline
\end{tabular}
    \caption{Fixed points for the action of $L_2(11)$}
\end{table}

%\end{center}
\normalsize
\footnotesize
%\begin{center}

\begin{table}[!h]
\begin{tabular}{|c|c|c|c|c|c|}
        \hline \hline
        \# & Group & Gap ID & Fix $\mathbb{P}^5$ & Fix $Y$ & Fix $Y^{sing}$\\ \hline \hline 
        1 & $1$ &  1, 1  & 5 & Y & $Y^{sing}$ \\ \hline
        2 & $\mathbb{Z}_2$ &  2, 1  & 3 1 & $S_2 \cup S_4\cup \text{6 pt}$ & $C_8$\\ \hline
        3 & $\mathbb{Z}_3$ &  3, 1  & 1 1 1 & 13 pt & 1 pt \\ \hline
        4 & $\mathbb{Z}_2 \times \mathbb{Z}_2$ &  4, 2  & 2 0 0 0 & $6C_1$ & 15 pt \\ \hline
        5 & $\mathbb{Z}_4$ &  4, 1  & 1 1 0 0 & 12 pt & - \\ \hline
        6 & $\mathbb{Z}_5$ &  5, 1  & 1 0 0 0 0 & 13 pt & 1 pt \\ \hline
        7 & $S_3$ &  6, 1  & 1 & 1 pt & 1 pt \\ \hline
        %8 & $\mathbb{Z}_8$ &  8, 1  & 0 0 \\ \hline
        8 & $D_8$ &  8, 3  & 1 0 0 & 6 pt & - \\ \hline
        9 & $D_{10}$ &  10, 1  & 1 & 1 pt & 1 pt\\ \hline
        10 & $A_4$ &  12, 3  & 0 0 0 & 1 pt & 1 pt\\ \hline
        %12 & $S_4$ &  24, 12  & 0 \\ \hline
        11 & $A_5$ &  60, 5  & 0 & 1 pt & 1 pt\\ \hline
\end{tabular}    
    \caption{Fixed points for the action of $M_{10}$}
\end{table}

%\end{center}
\normalsize
\footnotesize
%\begin{center}
\begin{table}[!h]
\begin{tabular}{|c|c|c|c|c|c|}
        \hline \hline
        \# & Group & Gap ID & Fix $\mathbb{P}^5$ & Fix $Y$ & Fix $Y^{sing}$\\ \hline \hline 
        1 & $1$ &  1, 1  & 5 & Y & $Y^{sing}$ \\ \hline
        2 & $\mathbb{Z}_2$ &  2, 1  & 3 1 & $S_2 \cup S_4\cup \text{6 pt}$ & $C_8$\\ \hline
        3 & $\mathbb{Z}_3$ &  3, 1  & 1 1 1 & 15 pt & 3 pt \\ \hline
		4 & $\mathbb{Z}_3$ &  3, 1  & 1 1 1 & 15 pt & 3 pt \\ \hline
		5 & $\mathbb{Z}_2 \times \mathbb{Z}_2$ &  4, 2  & 2 0 0 0 & $3C_2$ & 8pt \\ \hline
		6 & $\mathbb{Z}_2 \times \mathbb{Z}_2$ &  4, 2  & 2 0 0 0 & $3C_2$ & 8pt \\ \hline
		7 & $\mathbb{Z}_4$ &  4, 1  & 1 1 0 0 & 10pt & 4pt \\ \hline
		8 & $\mathbb{Z}_5$ &  5, 1  & 1 0 0 0 0 & 8pt & 2pt\\ \hline
		9 & $\mathbb{Z}_6$ &  6, 2  & 1 0 0 0 0 & 7pt & 3pt \\ \hline
		10 & $S_3$ &  6, 1  & 1 & 5pt & 1pt\\ \hline
		11 & $S_3$ &  6, 1  & 1 & 5pt & 1pt\\ \hline
		12 & $\mathbb{Z}_7$ &  7, 1  & 0 0 0 0 0 0 & 6pt & 3pt \\ \hline
		13 & $D_8$ &  8, 3  & 1 0 0 & 4pt & 2pt \\ \hline
        14 & $D_{10}$ &  10, 1  & 1 & 4pt & 2pt\\ \hline
        %15 & $A_4$ &  12, 3  & 0 0 0 \\ \hline
		%16 & $A_4$ &  12, 3  & 0 0 0 \\ \hline
        %17 & $A_4$ &  12, 3  & 0 0 0\\ \hline
        %18 & $A_4$ &  12, 3  & 0 0 0 \\ \hline
		15 & $\mathbb{Z}_6 \times \mathbb{Z}_2$ &  12, 5  & 0 0 0 0 0 0 & 3pt & 3pt\\ \hline
		16 & $\mathbb{Z}_3 : \mathbb{Z}_4$ &  12, 1  & 0 0 & 1pt & 1pt  \\ \hline
		  17 & $D_{12}$ &  12, 4  & 1 & 5pt & 1pt \\ \hline
		%22 & $\mathbb{Z}_5 : \mathbb{Z}_4$ &  20, 3  & 0 0 \\ \hline
		  18 & $(\mathbb{Z}_6 \times \mathbb{Z}_2) : \mathbb{Z}_2$ &  24, 8  & 0 0 & 1pt & 1pt \\ \hline
        %24 & $S_4$ &  24, 12  & 0\\ \hline
		%25 & $S_4$ &  24, 12  & 0 \\ \hline
		%26 & $S_4$ &  24, 12  & 0 \\ \hline
		%27 & $S_4$ &  24, 12  & 0 \\ \hline
		%28 & $A_5$ &  60, 5  & 0 \\ \hline
		%29 & $A_5$ &  60, 5  & 0 \\ \hline
		%30 & $S_5$ &  120, 34  & 0 \\ \hline
    
\end{tabular}
\caption{Fixed points for the action of $A_7$}
\end{table}
%\end{center}
\normalsize
\footnotesize
%\begin{center}
\begin{table}[!h]
\begin{tabular}{|c|c|c|c|c|c|}
        \hline \hline
        \# & Group & Gap ID & Fix $\mathbb{P}^5$ & Fix $Y$ & Fix $Y^{sing}$\\ \hline \hline 
1 & $1$ &  1, 1  & 5 & $Y$ & $Y^{sing}$ \\ \hline
2 & $\mathbb{Z}_2$ &  2, 1  & 3 1 & $S_2 \cup S_4\cup \text{6 pt}$ & $C_8$\\ \hline
3 & $\mathbb{Z}_3$ &  3, 1  & 1 1 1 & 15 pt & 3 pt \\ \hline
4 & $\mathbb{Z}_2 \times \mathbb{Z}_2$ &  4, 2  & 2 0 0 0 & $3C_2$ & 8 pt\\ \hline
5 & $\mathbb{Z}_2 \times \mathbb{Z}_2$ &  4, 2  & 2 0 0 0 & $3C_2$ & 8 pt \\ \hline
6 & $\mathbb{Z}_2 \times \mathbb{Z}_2$ &  4, 2  & 2 0 0 0 & $3C_2$ & 8 pt \\ \hline
7 & $\mathbb{Z}_2 \times \mathbb{Z}_2$ &  4, 2  & 2 0 0 0 & $3C_2$ & 8 pt  \\ \hline
8 & $\mathbb{Z}_2 \times \mathbb{Z}_2$ &  4, 2  & 1 1 1  & 18 pt & - \\ \hline
9 & $\mathbb{Z}_2 \times \mathbb{Z}_2$ &  4, 2  & 1 1 1 & 18 pt & - \\ \hline
10 & $\mathbb{Z}_2 \times \mathbb{Z}_2$ &  4, 2  & 1 1 1 & 18 pt & - \\ \hline
11 & $\mathbb{Z}_4$ &  4, 1  & 1 1 1 & $2C_1 \cup 6\text{pt}$ & 8 pt\\ \hline
12 & $\mathbb{Z}_4$ &  4, 1  & 1 1 0 0 & 10 pt & 4 pt\\ \hline
13 & $\mathbb{Z}_4$ &  4, 1  & 1 1 0 0 & 10 pt & 4 pt\\ \hline
14 & $\mathbb{Z}_5$ &  5, 1  & 1 0 0 0 0 & 8 pt & 2 pt\\ \hline
15 & $S_3$ &  6, 1  & 1 & 6 pt & 1 pt \\ \hline
16 & $\mathbb{Z}_7$ &  7, 1  & 0 0 0 0 0 0 & 6 pt & 3 pt \\ \hline
17 & $\mathbb{Z}_4 \times \mathbb{Z}_2$ &  8, 2  & 0 0 0 0 0 0  & 2 pt & -\\ \hline
18 & $\mathbb{Z}_4 \times \mathbb{Z}_2$ &  8, 2  & 1 0 0 0 0 & 10 pt & -\\ \hline
19 & $\mathbb{Z}_4 \times \mathbb{Z}_2$ &  8, 2  & 1 0 0 0 0 & 6 pt & -\\ \hline
20 & $\mathbb{Z}_2 \times \mathbb{Z}_2 \times \mathbb{Z}_2$ &  8, 5  & 1 0 0 0 0 & 2 pt & -\\ \hline
21 & $\mathbb{Z}_2 \times \mathbb{Z}_2 \times \mathbb{Z}_2$ &  8, 5  & 1 0 0 0 0  & 6 pt & -\\ \hline
22 & $D_8$ &  8, 3  & 1 0 0 & 4 pt & 2 pt\\ \hline
23 & $D_8$ &  8, 3  & 1 0 0 & 6 pt & -\\ \hline
24 & $D_8$ &  8, 3  & 1 & 4 pt & 2 pt\\ \hline
25 & $Q_8$ &  8, 4  & 0 0 & 2 pt & -\\ \hline
26 & $D_{10}$ &  10, 1  & 1 & 4 pt & 2 pt\\ \hline
27 & $\mathbb{Z}_4 \times \mathbb{Z}_4$ &  16, 2  & 0 0 0 0 0 0 & 6 pt & -\\ \hline
28 & $(\mathbb{Z}_4 \times \mathbb{Z}_2) : \mathbb{Z}_2$ &  16, 3  & 0 0 0 0 & 2 pt & - \\ \hline
29 & $(\mathbb{Z}_4 \times \mathbb{Z}_2) : \mathbb{Z}_2$ &  16, 3  & 0 0 0 0  & 2 pt & -\\ \hline
30 & $(\mathbb{Z}_4 \times \mathbb{Z}_2) : \mathbb{Z}_2$ &  16, 3  & 0 0 0 0 & 2 pt & -\\ \hline
31 & $(\mathbb{Z}_4 \times \mathbb{Z}_2) : \mathbb{Z}_2$ &  16, 3  & 0 0 & 2 pt & -\\ \hline
32 & $\mathbb{Z}_2 \times D_8$ &  16, 11  & 0 0 0 0 & 2 pt & -\\ \hline
33 & $\mathbb{Z}_2 \times D_8$ &  16, 11  & 1 & 6 pt & - \\ \hline
34 & $(\mathbb{Z}_4 \times \mathbb{Z}_4) : \mathbb{Z}_2$ &  32, 31  & 0 0 & 2 pt & - \\ \hline
\end{tabular}
    \caption{Fixed points for the action of $L_3(4)$}
\end{table}
%\end{center}
\normalsize
\clearpage

\bibliographystyle{halpha-abbrv}
\bibliography{references}

@article{Anno,
  author  = {Anno, R. E.},
  title   = {Four-{D}imensional {T}erminal {G}orenstein {Q}uotient {S}ingularities},
  journal = {Math. Notes},
  volume  = {73},
  number  = {5},
  year    = {2003},
  pages   = {769--776}
}

@article{LLX,
title = "Irreducible symplectic varieties with a large second {B}etti number",
author = "Yuchen Liu and Zhiyu Liu and Chenyang Xu",
year = "2025",
volume = "2025",
pages = "1--31",
journal = "J. Reine Angew. Math.",
number = "825",
}

@article{BBFJLS,
author = {Bellamy, Gwyn and Bonnafé, Cédric and Fu, Baohua and Juteau, Daniel and Levy, Paul and Sommers, Eric},
title = {A new family of isolated symplectic singularities with trivial local fundamental group},
journal = {Proc. London Math. Soc.},
volume = {126},
number = {5},
pages = {1496-1521},
year = {2023}
}

@article{BL,
  author  = {Bakker, B. and Lehn, Ch.},
  title   = {A global  {T}orelli theorem for singular symplectic varieties},
  journal = {J. Eur. Math. Soc.},
  volume  = {23},
  year    = {2021},
  pages    = {949–994}
}

@article{Beauville,
  title={Vari{\'e}t{\'e}s {K}{\"a}hleriennes dont la premi{\`e}re classe de {C}hern est nulle},
  author={Arnaud Beauville},
  journal={J. Differential Geom.},
  year={1983},
  volume={18},
  pages={755-782},
  }

@article{BGMM,
  author  = {Bertini, V. and Grossi, A. and Mauri, M. and Mazzon, E.},
  title   = {Terminalizations of quotients of compact hyperk{\"a}hler manifolds by induced symplectic automorphisms},
  journal = {\'Epijournal G\'eom. Alg\'ebrique},
  volume  = {9},
  year    = {2025},
  note    = {Article No. 14}
}

@article{Bel16,
author={Bellamy, Gwyn}, 
title={Counting resolutions of symplectic quotient singularities},
volume={152},
number={1},
journal={Compos. Math.}, 
year={2016}, 
pages={99–114}
}

@article{BilliWawak,
  author  = {Billi, S. and Wawak, T.},
  title   = {Double {EPW}-sextics with actions of $\mathcal{A}_7$ and irrational {GM} threefolds},
  journal = {Bull. Soc. Math. France},
  volume  = {152},
  number  = {4},
  year    = {2024},
  pages   = {857--868}
}

@article{BMW,
title = {On birational automorphisms of double {EPW}-cubes},
author = {Simone Billi and Stevell Muller and Tomasz Wawak},
year = {2025},
journal = {Math. Nachr.},
volume = {298},
number = {6},
pages = {1943–1963}
}

@incollection{Campana,
 author = {Campana, Fr{\'e}d{\'e}ric},
 title = {Orbifolds with trivial first {Chern} class},
 booktitle = {The Fano conference. Papers of the conference organized to commemorate the 50th anniversary of the death of Gino Fano (1871--1952), Torino, Italy, September 29--October 5, 2002},
 isbn = {88-900876-1-7},
 pages = {339--351},
 year = {2004},
 publisher = {Torino: Universit{\`a} di Torino, Dipartimento di Matematica},
}

@article{OGrady-EPW,
  title={Irreducible symplectic 4-folds and {E}isenbud-{P}opescu-{W}alter sextics},
  author = {{O}'{G}rady, Kieran},
  journal={Duke Math. J.},
  year={2005},
  volume={134},
  pages={99-137},
}

@article{HP,
    author = {H{\"o}ring, Andreas and Peternell, Thomas},
    title = {Algebraic integrability of foliations with numerically trivial canonical bundle},
    journal = {Invent. Math.},
    year = {2019},
    number = {2},
    pages = {395--419},
    volume = {216} 
}

@incollection{Perego,
 author = {Perego, Arvid},
 title = {Examples of {I}rreducible {S}ymplectic {V}arieties},
 booktitle = {Birational Geometry and Moduli Spaces},
 isbn = {978-3-030-37114-2},
 pages = {151--172},
 year = {2020},
 publisher = {Springer INdAM Series, vol. 39. Springer, Cham.},
}

@article{CGKK,
  author  = {Chiara Camere and Alice Garbagnati and Grzegorz Kapustka and Michal Kapustka},
  title   = {Projective orbifolds of {N}ikulin type},
  journal = {Algebra Number Theory},
  volume  = {18},
  number  = {1},
  year    = {2024},
  pages   = {165--208}
}

@unpublished{newpaper,
  author  = {Camere, C. and Garbagnati, A. and Kapustka, G. and Kapustka, M.},
  title   = {Hyper-K\"{a}hler fourfolds and generalised {N}ikulin surfaces},
  note ={(Upcoming).}
  }

@unpublished{DG,
  author  = {Donten-Bury, M. and Grab, M.},
  title   = {Cox rings of some symplectic resolutions of quotient singularities},
  note    = {(Preprint) arXiv:1504.07463}
}

@article{FJLS,
  author  = {Fu, B. and Juteau, D. and Levy, P. and Sommers, E.},
  title   = {Generic singularities of nilpotent orbit closures},
  journal = {Adv. Math.},
  volume  = {305},
  year    = {2017},
  pages   = {1--77}
}

@article{BayerPerry,
title = {Kuznetsov’s {F}ano threefold conjecture via {K3} categories and enhanced group actions},
author = {Arend Bayer and Alexander Perry},
pages = {107--153},
volume = {2023},
number = {800},
journal = {J. Reine Angew. Math.},
year = {2023},
}

@article{Bonnafe2018,
  author    = {Bonnaf{\'e}, C{\'e}dric},
  title     = {On the {C}alogero--{M}oser space associated with dihedral groups},
  journal   = {Ann. Math. Blaise Pascal},
  year      = {2018},
  volume    = {25},
  number    = {2},
  pages     = {265--298},
}

@incollection{Fujiki,
  author    = {Fujiki, A.},
  title     = {On primitively symplectic compact {K}{\"a}hler {V}-manifolds of dimension four},
  booktitle = {Classification of algebraic and analytic manifolds (Katata, 1982)},
  series    = {Progr. Math.},
  volume    = {39},
  publisher = {Boston : Birkhäuser},
  year      = {1983},
  pages     = {71--250}
}

@unpublished{Ghirlanda,
  author    = {Ghirlanda, Marco},
  title     = {A canonicity criterion for toric varieties and the classification of canonical 4-simplices},
  note   = {(Preprint) arXiv:2603.21198}
}

@article{HM,
  author  = {H{\"o}hn, G. and Mason, G.},
  title   = {Finite groups of symplectic automorphisms of hyperk{\"a}hler manifolds of type {K3}$^{[2]}$},
  journal = {Bull. Inst. Math. Acad. Sin. (N.S.)},
  volume  = {14},
  number  = {2},
  year    = {2014},
  pages   = {189--264}
}

@incollection {ItoReid,
    AUTHOR = {Ito, Yukari and Reid, Miles},
     TITLE = {The {M}c{K}ay correspondence for finite subgroups of {${\rm
              SL}(3,\bold C)$}},
 BOOKTITLE = {Higher-dimensional complex varieties ({T}rento, 1994)},
     PAGES = {221-240},
 PUBLISHER = {de Gruyter, Berlin},
      YEAR = {1996},
   MRCLASS = {14E15 (14F45 14L30)},
  MRNUMBER = {1463181 (98i:14018)},
MRREVIEWER = {Nicolas Pouyanne},
}

@article {KaledinMcKay,
    AUTHOR = {Kaledin, D.},
     TITLE = {Mc{K}ay correspondence for symplectic quotient singularities},
   JOURNAL = {Invent. Math.},
  FJOURNAL = {Inventiones Mathematicae},
    VOLUME = {148},
      YEAR = {2002},
    NUMBER = {1},
     PAGES = {151--175},
      ISSN = {0020-9910},
     CODEN = {INVMBH},
   MRCLASS = {14E15},
  MRNUMBER = {MR1892847 (2003d:14022)},
MRREVIEWER = {Nicolas Pouyanne},
}

@article{Men20,
  author  = {Menet, Gr{\'e}goire},
title   = {Global {T}orelli theorem for irreducible symplectic orbifolds},                                
journal = {J. Math. Pures Appl. (9)},
 year    = {2020},
 volume  = {137}, 
 pages   = {213-237} 
}

@unpublished{Menet33,
  author  = {Menet, Gr{\'e}goire},
  title   = {Thirty-three deformation classes of compact hyperk{\"a}hler orbifolds},
  note = {(Preprint) arXiv:2211.14524}
}

@phdthesis{MongardiThesis,
  author = {Mongardi, G.},
  title  = {Automorphisms of hyperk{\"a}hler manifolds},
  school = {Universit{\`a} degli Studi Roma Tre},
  year = {2013}
}

@article{Mongardi,
title = {Symplectic involutions on deformations of {K3}{$^{[2]}$}},
author = {Giovanni Mongardi},
pages = {1472--1485},
volume = {10},
number = {4},
journal = {Open Math.},
year = {2012},
}

@article{OGrady10,
author = {{O}'{G}rady, Kieran},
title = {Desingularized moduli spaces of sheaves on a {K3}},
pages = {49--117},
volume = {1999},
number = {512},
journal = {J. Reine Angew. Math.},
year = {1999},
}

@article{OGrady6,
author = {{O}'{G}rady, Kieran},
year = {2003},
pages = {435-505},
title = {A new six dimensional irreducible symplectic variety},
volume = {12},
journal = {J. Algebraic Geom.},
}

@unpublished{Wawak,
  author  = {Wawak, T.},
  title   = {Very symmetric hyper-{K}{\"a}hler fourfolds},
  note = {(Preprint) arXiv:2212.02900},
}

@article{Xiao,
  author  = {Xiao, G.},
  title   = {Galois covers between {K}3 surfaces},
  journal = {Ann. Inst. Fourier},
  volume  = {46},
  year    = {1996},
  pages   = {73--88}
}

@article{Hashimoto,
    author = {Hashimoto, Kenji},
    title = {Finite symplectic actions on the $K3$ lattice},
    journal = {Nagoya Math. J.},
    volume = {206},
    year = {2012},
    pages={99--153}
}

@article{DW,
    author = {Donten-Bury, M. and  Wi{ś}niewski, J. A.},
    title = {On 81 symplectic resolutions of a 4-dimensional quotient by a group of order 32},
    journal = {Kyoto J. Math.},
    volume = {57},
    number = {2},
    year = {2017},
    pages = {395–434}
}

@misc {Singular,
label = {Sing},
 title = {{\sc Singular} {4-4-0} --- {A} computer algebra system for polynomial computations},
 author = {Decker, Wolfram and Greuel, Gert-Martin and Pfister, Gerhard and Sch\"onemann, Hans},
 year = {2024},
 howpublished = {\url{http://www.singular.uni-kl.de}},
}

@manual{Sage,
label={Sage},
  Key          = {SageMath},
  Author       = {{The Sage Developers}},
  Title        = {{S}ageMath, the {S}age {M}athematics {S}oftware {S}ystem ({V}ersion 10.3)},
  note         = {{\tt https://www.sagemath.org}},
  Year         = {2024},
}

@misc{ATLAS,
    label = {ATLAS},
    title = {{ATLAS of Finite Group Representations - Version 3} {ATLAS: Alternating group $A_7$}},
    author = {Robert Wilson and Peter Walsh and Jonathan Tripp and Ibrahim Suleiman and Richard Parker and Simon Norton and Simon Nickerson and Steve Linton and John Bray and Rachel Abbott},
    howpublished = {\url{http://brauer.maths.qmul.ac.uk/Atlas/v3/alt/A7/}},
}

@manual{GAP,
    label={GAP},
    organization = {The GAP~Group},
    title= {GAP -- Groups, Algorithms, and Programming, Version 4.15.1},
    year= {2025},
    note         = {{\tt https://www.gap-system.org}}
}

@Misc{M2,
    label = {M2},
    author = {Grayson, Daniel R. and Stillman, Michael E.},
    title = {Macaulay2, a software system for research in algebraic geometry},
    howpublished = {Available at \url{http://www2.macaulay2.com}}
}

@article{BCHM2010,
  author  = {Birkar, Caucher and Cascini, Paolo and Hacon, Christopher D. and McKernan, James},
  title   = {Existence of minimal models for varieties of log general type},
  journal = {J. Amer. Math. Soc.},
  year    = {2010},
  volume  = {23},
  number  = {2},
  pages   = {405--468},
}

@article{Yam, 
    author={Yamagishi, Ryo},
    title={On smoothness of minimal models of quotient singularities by finite subgroups of {$SL_n(\mathbb{C})$}},
    volume={60}, 
    number={3}, 
    journal={Glasg. Math. J.},
    year={2018}, 
    pages={603–634}
}

@article{WW,
  title={Small contractions of symplectic 4-folds},
  author={Wierzba, J. and Wi{ś}niewski, J. A.},
  journal={Duke Math. J.},
  year={2002},
  volume={120},
  pages={65-95},
}

@article{PR,
    author = {Perego, A. and Rapagnetta, A.},
    title = {Irreducible symplectic varieties from moduli spaces of sheaves on {K3} and {A}belian surfaces},
    journal = {Algebr. Geom.},
    volume = {10},
    number ={3},
    year = {2023},
    pages={348–393}
}

@article{MT,
    author = {Markushevich, D.G. and Tikhomirov, A. S.},
    title = {New symplectic {V}-manifolds of dimension four via the relative compactified {P}rymian},
    journal = {Int. J. Math.},
    volume = {18},
    number ={10},
    year = {2007},
    pages={1187-1224}
}

@article{ASF,
    author = {Arbarello, E. and Saccà, G. and Ferretti, A.},
    title = {Relative Prym varieties associated to the double cover of an Enriques surface},
    journal = {J. Differential Geom.},
    volume = {100},
    number = {2},
    year = {2015},
    pages = {191--250}
}

@article{Prym,
title = {Irreducible symplectic varieties via relative {P}rym varieties},
journal = {Adv. Math.},
volume = {490},
pages = {110826},
year = {2026},
author = {Emma Brakkee and Chiara Camere and Annalisa Grossi and Laura Pertusi and Giulia Saccà and Sasha Viktorova}
}

@article{MenetCyclic,
    author = {Menet, G.},
    title = {On the integral cohomology of quotients of manifolds by cyclic groups},
    journal = {J. Math. Pures Appl.},
    volume = {119},
    number ={9},
    year = {2018},
    pages ={280–325}
}

@article{KapMen,
    author = {Kapfer, S. and Menet, G.},
    title = {Integral cohomology of the generalized {K}ummer fourfold},
    journal = {Algebr. Geom.},
    volume ={5},
    number ={5},
    year = {2018},
    pages={523-567}
}

@article{Menet2,
title = {Beauville–Bogomolov lattice for a singular symplectic variety of dimension 4},
journal = {J. Pure Appl. Algebra},
volume = {219},
number = {5},
pages = {1455-1495},
year = {2015},
author = {Grégoire Menet}
}

@unpublished{Mazzon,
    author = {Enrica Mazzon},
    title = {Terminalizations of quotients of {F}ano varieties of lines on cubic fourfolds},
    note = {(Preprint) arXiv:2602.16492}
}

@article{Verbitsky,
  author  = {Verbitsky, Misha},
  title   = {Holomorphic symplectic geometry and orbifold singularities},
  journal = {Asian J. Math.},
  year    = {2000},
  volume  = {4},
  number  = {3},
  pages   = {553--564},
}

@article{Brieskorn,
    author = {Brieskorn, Egbert},
    title = {Rationale {S}ingularit{\"a}ten komplexer {F}l{\"a}chen},
    journal = {Invent. Math.},
    volume ={4},
    year = {1968},
    pages ={1432-1297}
}

@article{marisia,
    author = {Maria Donten-Bury},
    title = {Cox rings of minimal resolutions of surface quotient singularities},
    journal = {Glasgow Math. J.},
    volume ={58},
    year = {2016},
    pages ={325–355}
}

@book{ADHL,
    author = {Arzhantsev, I. and Derenthal, U. and Hausen, J. and Laface, A.},
    title = {Cox Rings},
    publisher = {Cambridge University Press, New York},
    year = {2014}
}

@article{BerchtoldHausen,
author = {Berchtold, F. and Hausen, J.},
year = {2006},
month = {11},
pages = {483-516},
title = {{GIT}-{E}quivalence beyond the ample cone},
volume = {54},
journal = {Michigan Math. J.},
}

@article{SchmittClassGroup,
author = {Schmitt, Johannes},
title = {The class group of a minimal model of a quotient singularity},
journal = {Bull. Lond. Math. Soc.},
volume = {56},
number = {9},
pages = {2777-2793},
year = {2024}
}

@article{Luna,
    author = {Luna, Domingo},
    title = {Slices {é}tales},
    journal ={Bull. Soc. Math. France},
    volume={33},
    year = {1973},
    pages = {81-105}
}

\end{document}